\newcommand{\cC}[0]{\mathcal{C}}
\newcommand{\cF}[0]{\mathcal{F}}
\newcommand{\cG}[0]{\mathcal{G}}
\newcommand{\cH}[0]{\mathcal{H}}
\newcommand{\cL}[0]{\mathcal{L}}
\newcommand{\cM}[0]{\mathcal{M}}
\newcommand{\cN}[0]{\mathcal{N}}
\newcommand{\cO}[0]{\mathcal{O}}
\newcommand{\cP}[0]{\mathcal{P}}
\newcommand{\cS}[0]{\mathcal{S}}
\newcommand{\cT}[0]{\mathcal{T}}
\newcommand{\cW}[0]{\mathcal{W}}
\newcommand{\cZ}[0]{\mathcal{Z}}
\newcommand{\bbE}[0]{\mathbb{E}}
\newcommand{\bbP}[0]{\mathbb{P}}
\newcommand{\bbR}[0]{\mathbb{R}}
\newcommand{\bbW}[0]{\mathbb{W}}
\begin{document}

\title{ Causal transport on path space}
\author{Rama CONT and Fang Rui LIM\\
Mathematical Institute, University of Oxford}
\maketitle
\begin{abstract}
We study   properties of causal couplings for probability measures on the space of continuous functions.
We first provide a 
characterization of bicausal couplings between weak solutions of stochastic differential equations.
We then provide a complete description of all such bicausal Monge couplings. In particular, we show that bicausal Monge couplings of $d$-dimensional Wiener measures are induced by stochastic integrals of rotation-valued integrands.
 As an application, we give necessary and sufficient conditions for bicausal couplings to be induced by Monge maps and show that such bicausal Monge transports are dense in the set of bicausal couplings between laws of SDEs with 
regular  coefficients.
\end{abstract} 
\newpage

\tableofcontents

\newpage

\section{Optimal transport and its causal counterpart}
\label{section:introduction}

The theory of optimal transport of probability measures  and the associated Wasserstein metric has been the focus of a large body of research \cite{ambrosio2021lectures,peyre2019computational,rachev2006,villani2008optimal} with rich ramifications across various branches of mathematics and applications. 
In the context of stochastic processes, where one deals with probability measures on {\it filtered} spaces, a natural extension of the concept of transport is to incorporate  compatibility with the flow of information, leading to the concept of {\it causal transport} \cite{causalopttptlassalle}, defined as a transport which makes use, at any time $t$, only the information available at $t$. As shown by Lassalle \cite{causalopttptlassalle}, when the underlying space is  the space of trajectories of a stochastic process, the concept  of causal transport  has interesting connections with stochastic analysis. 

A natural example of causal transport is the one induced by a {\it non-anticipative} map. Denote by $\cW^d := C\deln{\sbr{0,1}, \mathbb{R}^d}$ the space of continuous $\mathbb{R}^d$-valued functions on $\sbr{0,1}$, with its canonical filtration $\del{\mathcal{F}_t}_{t \in \sbr{0,1}}$. A map $T : \cW^d \rightarrow \cW^d$ may be naturally interpreted as a $\mathbb{R}^d-$valued  functional  $F:\sbr{0,1} \times {\cal W}^d \mapsto F \del{\omega,t} := T\del{\omega}\del{t} \in \bbR^d$. Such a functional is said to be causal or {non-anticipative}    \cite{MR2652181,fliess1981} if $F\del{\omega,t}$ depends on  the path $\omega$ only through its values on $[0,t]$:  \[
F (\omega,t) = F (\omega(\cdot \wedge t),t), \text{ where } s \wedge t := \min\cbr{s,t}.
\]  The causality of $F$ is equivalent to $T^{-1}\del{\cF_t} \subseteq \cF_t$ for all $t \in \sbr{0,1}$.
Any probability measure $\eta$ on $\cW^d,$ is thus transported in a causal manner, i.e. non-anticipative with respect the filtration, to  an image measure $\nu $ on on $\cW^d.$ The map $T$ is then an example of causal (Monge) coupling on $\cW^d \times \cW^d$ between $\eta $ and $\nu$.

An equivalent manner to state that $T : \cW^d \rightarrow \cW^d$  is causal is to require \begin{equation}
\label{eqn:nonanticipativeequivctd}
\forall  t \in \sbr{0,1},\quad  \forall  B \in \mathcal{F}_t, \; \omega \mapsto \delta_{T\del{\omega}}\del{B} \text{ is } \mathcal{F}_t\text{-measurable},
\end{equation} where $\delta_{T\del{\omega}}$ is the point mass at $T\del{\omega} \in \cW^d$.
Denoting by $X$ the canonical process on $\cW^d$, $\delta_{T\del{\omega}}$ 
 represents in fact the `conditional distribution' of $T(X)$ given $X=\omega$:
$$ \delta_{T\del{\omega}}=\cL^\eta \del{T(X) | X = \omega}$$
This suggests the following definition of causality for an {\it arbitrary} coupling: given a probability measure $\pi$ on $\cW^d \times \cW^d$ and writing $X$ and $Y$ as the first and second marginal processes, one  requires,  by analogy with \eqref{eqn:nonanticipativeequivctd}:
\begin{equation}
    \label{eqn:toycausalexplanation}
    \forall  t \in \sbr{0,1},\quad  \forall B \in \mathcal{F}_t, \; \omega \mapsto \mathcal{L}^\pi\del{Y| X = \omega}\del{B} \text{ is } \mathcal{F}_t\text{-measurable}.
\end{equation} There are some technical subtleties to take care of since the conditional distribution is only defined up to null sets, and there is a choice to be made between using the canonical filtration    or its right-continuous version. We will give a precise  definition below (Definition \ref{def:causaldefinition}).


Since the notion of causality is not symmetric, the corresponding optimal transport problem does not yield a (symmetric) notion of distance. A remedy is to consider \emph{bicausal} couplings, which satisfy \eqref{eqn:toycausalexplanation} and its symmetric counterpart \[  \forall  t \in \sbr{0,1},\quad \forall B \in \mathcal{F}_t, \; \omega \mapsto \mathcal{L}^\pi\del{X| Y = \omega}\del{B} \text{ is } \mathcal{F}_t\text{-measurable}. \] 
Causal    optimal transport has been studied in detail in the  discrete time case \cite{backhoff2017causal,backhoff2020all,eder2019compactness,bartl2021wasserstein,eckstein2024computational,backhoff2022estimating, adaptedwassersteindistancesbackhoff, xu2020cot,yfjiang2024dro}, where the  sequential character of processes allows to study the properties over a single time period and extend them by induction. 
However, the continuous time case reveals much more structure, with links to stochastic analysis and functional inequalities \cite{causalopttptlassalle,follmer2022}. 

As noted by Lassalle \cite{causalopttptlassalle},  the Ito map associated with the unique strong solution of a stochastic differential equation  (SDE) is a natural example of causal Monge transport from the Wiener measure to the law of the solution, while the concept of weak solution provides an example of causal Kantorovich transport on path space.
Optimal transport on the Wiener space has been investigated in \cite{feyel2004monge,fang2015}. Causal transport on Wiener space has been investigated in the context of filtration enlargements \cite{causalopttptacciaio} and its relation to Talagrand's inequality \cite{causalopttptlassalle,follmer2022}. Backhoff et al. \cite{adaptedwassersteindistancesbackhoff} show the relevance of  causal Wasserstein distances induced by such causal transports for superhedging problems. 

The present work is a detailed study  of the structure of (bi-)causal couplings on path space, focusing on the case where the measures involved have a `differential' structure i.e. are (weak) solutions of stochastic differential equations (SDEs).  In addition to confirming that many of the results obtained in the discrete-time setting remain true for (bi-)causal transports on path space, we show that, when the measures  are weak solutions of SDEs,   bicausal transports between these measures inherit this differential structure in many cases, and may be represented in terms of stochastic integrals or, more generally, semimartingales.

Related questions have been studied in \cite{backhoffveraguas2024adaptedwassersteindistancelaws,robinson2024bicausal}, where  (bi-)causal transport  problems for  solutions to \emph{scalar} SDEs are investigated, as well as in \cite{bion2019wasserstein} using a stochastic control approach. 

\subsection{Contributions}

We study   properties of causal couplings for probability measures on the space of continuous functions. We first prove a characterization of bicausal couplings between weak solutions of SDEs and then investigate the case where the bicausal couplings are induced by a Monge transport.  Denoting by the $\Pi_{c}\del{\eta,\nu}$ (resp. $\Pi_{bc}\del{\eta,\nu}$) the set of all causal (resp. bicausal) couplings, for any causal map $T : \mathcal{W}^d \rightarrow \mathcal{W}^d$ which transports $\eta$ to $\nu$, i.e. $T_{\#}\eta = \nu$, we have \[
\del{\text{Id},T}_{\#}\eta \in \Pi_c\del{\eta,\nu} \iff T \text{ is non-anticipative.}
\] We show in Lemma \ref{lemma:bicausalmongemapequivctd} that, when $\eta$ and $\nu$ are unique weak solutions of SDEs,  \[
\del{\text{Id},T}_{\#}\eta \in \Pi_{bc}\del{\eta,\nu} \iff \del{T_t}_{t \in \sbr{0,1}} \text{ is a } \del{\eta,\del{\mathcal{F}_t}_{t \in \sbr{0,1}}}\text{-semimartingale,}
\] where $T_t := T\del{\omega}\del{t}$. Using a  martingale representation result  of \"Ust\"unel \cite{ustunel2019martingale}, we are able to completely characterise  bicausal Monge transports between such measures. We show that such maps inherit the differential structure of $\eta,\nu$ in the sense that $T$ satisfies an SDE \eqref{eqn:bicausalmongemapeqn} (Theorem \ref{thm:bicausaltptmapsbetweenSDEsdegen}). In particular, we show bicausal Monge transports of $d$-dimensional Wiener measures are induced by stochastic integrals of adapted $\cO^d$-valued integrands.

Given an SDE with unique strong solution, the Ito map provides an example of causal transport from the Wiener measure $\bbW^d$ to the law $\nu$ of the solution on path space.
Theorem \ref{thm:bicausaltptmapsbetweenSDEsdegen} characterizes \emph{all} bicausal Monge maps  from $\bbW^d$ to $\nu$ as the composition of the Ito map with integration against an adapted ${\cal O}^d$-valued process.

Using the aforementioned characterization of bicausal Monge transports, we give simple conditions for their existence (Corollaries \ref{corollary:nobicausalmongemaps},\ref{corollary:strongsolnsdebicausalmongemap},\ref{corollary:existenceofbicausalmongemapssimpleeg}). We show that any bicausal coupling between Wiener measures has a differential representation i.e. may be represented by a stochastic differential equation (Proposition \ref{prop:bicausaltptplansinducedbybicausalmongemapswienermeasures}). Through this SDE, we can establish necessary and sufficient conditions for bicausal couplings between weak solutions of SDEs to be induced by a Monge map (Proposition \ref{prop:bicausaltptplansinducedbybicausalmongemapswienermeasures} and Corollary \ref{corollary:bicausaltptplansinducedbybicausalmongemapssdes}). 

Beiglb\"ock et al. \cite{beiglbock2018denseness}  showed that in discrete time, the set   $\mathcal{T}_{c}\del{\mathbb{P}, \mathbb{P}'}$ of causal Monge couplings is dense in the set of all  {causal} couplings.
We show that similar density properties hold for bicausal transports between laws of SDEs with strong, pathwise unique  solutions and continuous (possibly path-dependent) drift and invertible diffusion coefficients (Proposition \ref{prop:bicausalmongemapdenseinbicausaltptplans} and \ref{prop:bijectivebicausalmongemapdenseinbicausaltptplans})
. 
The proofs  make use of M. \'Emery's results on ``almost Brownian filtrations"\cite{emery2005certain}, whose potential role in the study of causal  transports was already noticed in \cite[Section $5.4$]{beiglbock2018denseness}. 

A consequence of our results is that, for many cost functions the bicausal Monge problem between weak solutions of SDEs and its Kantorovich relaxation lead to the same transport cost (Corollary \ref{corollary:equalityofbckpandbcmp}).

\paragraph{Outline}
 Section \ref{section:preliminaries}  establish notations and defines causal couplings and maps. Section \ref{section:causaltptsbetweensdes} contains the main results on the characterization of bicausal couplings and Monge maps between laws of solutions of SDEs, and some results regarding the existence of such transports. Section \ref{section:applications} considers some applications of these results.

\section{Causal couplings on path space}
\subsection{Notations}
\label{section:preliminaries}

Let $d \geq 1$. Given $v, w \in \bbR^d$, we denote the standard Euclidean inner product as $\del{v,w}_{\bbR^d}$ and norm $\norm{v}_{\bbR^d} := \sqrt{\del{v,v}_{\bbR^d}}$. The space of $d \times d$ matrices is written as $\bbR^{d \times d}$. $\text{Id}_d \in \bbR^{d \times d}$ is the identity matrix. Given $A \in \bbR^{d \times d}$, $A^*$ denotes the transpose of $A$, $A^{-1}$ denotes the inverse of $A$, $\Tr A$ denotes its trace, $\det A$ denotes its determinant, $\ker A$ denotes its kernel and $A^{\dag}$ denotes its Moore-Penrose pseudoinverse  \cite{golub2013matrix}. We denote $\cO^d$ - the set of orthogonal matrices,  $\cS^d$ the set of symmetric matrices; and $\cC^d\subset \cS^d$  the subset of correlation  matrices defined as \cite{emery2005certain}:  \[
C \in \mathcal{C}^d \text{ if and only if the $2d \times 2d$ matrix } \begin{pmatrix}
    \text{Id}_d & C \\
    C^* & \text{Id}_d 
\end{pmatrix} \text{ is positive semi-definite.}
\]

Denote $\mathcal{W}^d := C\deln{\sbr{0,1}, \mathbb{R}^d}$, the space of continuous $\mathbb{R}^d$-valued functions on $\sbr{0,1}$. Equipped with the topology $\tau_{\mathcal{W}^d}$ induced by the supremum norm, $\norm{\cdot}_\infty$, $\mathcal{W}^d$ is a Polish space with Borel $\sigma$-algebra $\mathcal{F}_1 := \sigma\del{\tau_{\cW^d}}$. The canonical filtration is denoted $\del{\mathcal{F}_t}_{t \in \sbr{0,1}}$ and given by $\mathcal{F}_t = \sigma\del{f_t} \subseteq \cF_1$, where $f_t : \cW^d \rightarrow \deln{\cW^d, \mathcal{F}_1}$, $f_t\del{\omega} := \omega\del{\cdot \wedge t}$ for $t \in \sbr{0,1}$ and set $\del{\mathcal{H}_t}_{t\in \sbr{0,1}}$ to be 
the right-continuous version of the canonical filtration, i.e. $\mathcal{H}_t = \displaystyle \cap_{ \epsilon > 0} \mathcal{F}_{(t + \epsilon) \wedge 1} \subseteq \cF_1$. The canonical process on $\mathcal{W}^d$ is defined as $X : \mathcal{W}^d \times \sbr{0,1} \rightarrow \mathbb{R}^d$, $X\del{\omega, t} = X_t\del{\omega} := \omega\del{t}$. When we write $X_{\cdot}$, we mean the identity map, i.e. $X_{\cdot} : \cW^d \rightarrow \cW^d$, $X_{\cdot}\del{\omega} = \omega$. On the product $\cW^d\times \cW^d$, without risk of confusion, $X$ will also denote the first marginal, $X(\omega, \omega') := \omega$ while $Y$ denotes the second marginal, $Y(\omega,\omega') := \omega'$. We refer to $\del{X_t}_{t \in \sbr{0,1}}$ (resp. $\del{Y_t}_{t \in \sbr{0,1}}$) as the first (resp. second) marginal process $X_t\del{\omega, \omega'} := \omega\del{t}$ (resp. $Y_t\del{\omega, \omega'} := \omega'\del{t}$) in this case.

Given measurable spaces $\del{Z_i, \mathcal{Z}_i}$, where $i = 1,2$ and $\mathcal{Z}_i$ is some $\sigma$-algebra, $\cZ_1 \otimes \cZ_2$ is the product $\sigma$-algebra on $Z_1 \times Z_2$. $\mathcal{P}\del{Z_i}$ denotes the space of probability measures on $Z_i$. Given $\eta \in \cP\del{Z_1}$, the $\eta$-completion of $\cZ_1$ is written as $\cZ_1^\eta$. Given a $\cZ_1/\cZ_2$-measurable function $f : Z_1 \rightarrow Z_2$, $f_{\#}\eta \in \cP\del{Z_2}$ is the pushforward of $\eta$ under $f$, i.e. $f_{\#}\eta\del{A} := \eta\del{f^{-1}\del{A}}$ for $A \in \mathcal{Z}_2$.  Furthermore, for $p \geq 1$, we write $L^p\del{\eta, \mathcal{Z}_1}$ to be the set of all $\mathcal{Z}_1$-measurable functions $f : Z_1 \rightarrow \bbR$ (identified up to $\eta$-a.s. equivalence) satisfying \[
\mathbb{E}^\eta \del{\envert{f}^p} := \int_{Z_1} \envert{f\del{\omega}}^p \eta\del{d\omega} < \infty.
\] A filtration on $(Z_1, \cZ_1)$ is a family of $\sigma$-algebras $(\cN_t)_{t \in \sbr{0,1}}$, such that $\cN_s \subseteq \cN_t \subseteq \cZ_1$ for all $0 \leq s \leq t \leq 1$. 
We call the quadruple $\del{Z_1, \mathcal{Z}_1, \del{\mathcal{N}_t}_{t \in \sbr{0,1}}, \eta}$ a stochastic basis if $\del{\mathcal{N}_t}_{t \in \sbr{0,1}}$ is a $\eta-$complete, right-continuous filtration. 
The space of all square-integrable $\deln{\eta,\del{\cN_t}_{t\in [0,1]}}$-martingales is denoted \[
\cM^2\del{\eta, \del{\cN_t}_{t\in[0,1]}} := \cbr{M=\del{M_t}_{t \in \sbr{0,1}} \text{ is an } \deln{\eta,\del{\cN_t}_{t \in \sbr{0,1}}}\text{-martingale},\quad \sup_{t \in \sbr{0,1}} \bbE^\eta \envert{M_t}^2 < \infty }.
\] Finally, if $\eta_i \in \cP\del{Z_i}$ for $i = 1,2$, $\eta_1 \otimes \eta_2$ denotes the product coupling $\eta_1\otimes \eta_2\del{A \times B} := \eta_1\del{A}\eta_2\del{B}$ for $A \in \cZ_1, B\in \cZ_2$.


\subsection{Definitions}
Let $\eta, \nu \in \cP\deln{\mathcal{W}^d}$ be probability measures.
 $\pi \in  \mathcal{P}\deln{\cW^d \times \cW^d}$ is a coupling of $(\eta,\nu)$ if \[
{X}_{\#} \pi = \eta \text{ and } {Y}_{\#} \pi = \nu.\]
The set of all such couplings is denoted $\Pi(\eta,\nu)$.
Since $\cW^d \times \cW^d$ equipped with the product topology is a Polish space, for any $\pi \in \Pi\del{\eta,\nu}$ there exists an $\eta$-a.s. unique probability kernel $\Theta_\pi : \cW^d \times \mathcal{F}_1 \rightarrow \sbr{0,1}$, $\del{\omega, A} \mapsto \Theta^\omega_\pi\del{A}$ which disintegrates $\pi$ with respect to $\eta$ \cite{kallenberg1997foundations}: \[
\pi\del{A \times B} = \int_{\cW^d} \mathbbm{1}_A\del{\omega} \Theta^\omega_\pi\del{B} \eta\del{d\omega}.
\] We call $\Theta_\pi$ the disintegration of $\pi$ with respect to $\eta$. Note that $\Theta^\omega_\pi$ is a version of the conditional distribution of $Y$ given $X = \omega$, $\cL^\pi\del{Y | X = \omega}$. We recall the definitions of causal transports and causal couplings \cite{causalopttptacciaio,causalopttptlassalle}:
\begin{definition}[Causal couplings and causal Monge maps]
\label{def:causaldefinition}\ \\
\begin{enumerate}
    \item $\pi \in \Pi\del{\eta,\nu}$ is a causal coupling between $\eta$ and $\nu$ if\begin{equation}
        \label{eqn:causaldef}
        \text{ for all } t \in \sbr{0,1} \text{ and } B \in \mathcal{H}_t, \; \omega \mapsto \Theta_\pi^\omega\del{B} \text{ is } \mathcal{H}_t^\eta\text{-measurable}
        .
    \end{equation} We denote by $\Pi_{c}\del{\eta,\nu}$ the set of causal couplings between $\eta$ and $\nu$;
    \item A measurable map $T : \cW^d \rightarrow \cW^d$ is a causal Monge transport between $\eta$ and $\nu$ if the image of $\eta$ under $T$ is $\nu$ and the induced transport plan $\pi_T := \del{X_{\cdot},T}_{\#}\eta$ is a causal transport: $\pi_T\in \Pi_{c}\del{\eta,\nu}$. 
    \item We say that $\pi \in \Pi\del{\eta, \nu}$ is a bicausal coupling between $\eta$ and $\nu$ if $\pi \in \Pi\del{\eta, \nu}$ and $R_{\#}\pi \in \Pi\del{\nu, \eta}$ where $R$ is the exchange of coordinates: \begin{eqnarray*}
        R : \cW^d \times \cW^d &\rightarrow &\cW^d \times \cW^d\nonumber\\
        (\omega',\omega)&\mapsto &R\deln{\omega,\omega'} := \deln{\omega',\omega}
    \end{eqnarray*}  The set of all bicausal couplings between $\eta$ and $\nu$  is denoted $\Pi_{bc}\del{\eta,\nu}$. 
    \item $T : \cW^d \rightarrow \cW^d$ is said to be a bicausal Monge map between $\eta$ and $\nu$ if $\pi_T \in \Pi_{bc}\del{\eta, \nu}$.
    
    $\cT_{bc}\del{\eta,\nu}$ denotes the set of all $\pi \in \Pi\del{\eta,\nu}$ such that $\pi = \pi_T$ for some bicausal Monge map $T$. Abusing notation, we will write $T \in \cT_{bc}\del{\eta,\nu}$ if $\pi_T$ is a bicausal coupling.
    \item Finally, $\mathcal{T}_{bcb}\del{\eta,\nu}$ denotes the set of all $\pi \in \Pi\del{\eta,\nu}$ such that $\pi = \pi_T$ for some \emph{a.s. bijective}\footnote{Here, $T$ is said to be a.s. bijective if there exists $\hat{T} : \cW^d \rightarrow \cW^d$ such that $\eta$-a.s. $\hat{T} \circ T =X_\cdot$ and $\nu$-a.s. $T \circ \hat{T} = X_\cdot$.} bicausal Monge map $T$. 
\end{enumerate}

\begin{remark}
\label{remark:causalcpls}
\mbox{}
\begin{enumerate}
    \item Comparing Definition \ref{def:causaldefinition} and \eqref{eqn:toycausalexplanation}, we observe two differences:
    \begin{itemize}
        \item the first is that we ask for measurability with respect to the completed $\sigma$-algebra $\mathcal{H}_t^\eta$, instead of just $\mathcal{H}_t$. This is because the disintegration $\Theta_\pi$ is only $\eta$-a.s. unique; and,
        \item secondly, we have used the right-continuous filtration $\del{\cH_t}_{t \in \sbr{0,1}}$, instead of $\del{\cF_t}_{t \in \sbr{0,1}}$. This is motivated by the fact $\del{\cH^\eta_t}_{t \in \sbr{0,1}}$ is a complete, right-continuous filtration and many familiar results from stochastic analysis hold, e.g. martingales have c\`adl\`ag modifications.
    \end{itemize}
    \item Notice that to verify \eqref{eqn:causaldef}, it is sufficient to check measurability of $\Theta_\pi\del{B}$ for all $B \in \cF_t$. This observation implies that:
    \begin{itemize}
        \item if $\cH_t^\eta = \cF_t^\eta$ for all $t \in \sbr{0,1}$, then it does not matter if we replaced $\cH_t$ with $\cF_t$ in \eqref{eqn:causaldef}. Indeed, the set of causal couplings would not change. This was observed in \cite[Remark $2.4$]{backhoffveraguas2024adaptedwassersteindistancelaws} for the case when $\eta$ is the law of a strongly Markov process. We also mention \cite[Theorem $5.19$]{liptser1977statistics} which shows that $\cH_t^\eta = \cF_t^\eta$ when $\eta$ is the law of an SDE with Lipschitz drift and uniformly elliptic diffusion coefficient; and,
        \item $\Pi_c\del{\eta,\nu}$ and hence, $\Pi_{bc}\del{\eta,\nu},$ are compact for the topology of weak convergence by \cite[Theorem $1$, Remark $5$]{causalopttptlassalle}.
    \end{itemize}
    \end{enumerate}
\end{remark}



\end{definition} 

\subsection{Relation with the H-hypothesis} Br\'emaud  and Yor \cite{bremaud1978changes} introduced the {\it H-hypothesis}  as a property of filtrations useful in filtering theory. Two filtrations $\del{\cM_t}_{t\in \sbr{0,1}}$ and $\del{\cN_t}_{t\in \sbr{0,1}}$ on a probability space $\del{\Omega, \cN, \bbP}$ such that $\cM_t \subseteq \cN_t \subseteq \cN$ are said to satisfy the H-hypothesis if  every square integrable $\deln{\bbP,\del{\cM_t}}$-martingale is also an $\deln{\bbP,\del{\cN_t}}$-martingale:\footnote{Note that this is a ``property" rather than a ``hypothesis" but we will stick to the historical terminology introduced in \cite{bremaud1978changes}.}
$$\cM^2\deln{\bbP,\del{\cM_t}_{t \in \sbr{0,1}}} \subseteq \cM^2\deln{\bbP,\del{\cN_t}_{t \in \sbr{0,1}}}. $$
It was noticed in \cite{causalopttptacciaio} that causality of $\pi \in \Pi\del{\eta,\nu}$ is equivalent to the H-hypothesis for the filtrations $\deln{\cH_t \otimes \cbrn{\emptyset, \cW^d}}_{t\in \sbr{0,1}}$ and $\del{\cH^\eta_t \otimes \cH_t}_{t\in \sbr{0,1}}$ on $\deln{\cW^d \times \cW^d, \cF_1 \otimes \cF_1, \pi}$. Since this is a key property that we will use repeatedly throughout the article, we state this as a theorem:

\begin{theorem}
\label{thm:equivctdscausality}
A coupling $\pi \in \Pi\del{\eta,\nu}$ is a causal coupling if and only if the filtrations $\deln{\cH^\eta_t \otimes \cbrn{\emptyset, \cW^d}}_{t\in \sbr{0,1}}$ and $\del{\del{\cH_t \otimes \cH_t}^\pi}_{t+ \in \sbr{0,1}}$ on $\deln{\cW^d \times \cW^d, \cF_1 \otimes \cF_1, \pi}$ 
satisfy the  $H$-hypothesis:
  \begin{equation}
        \label{eqn:hhypothesis}
        \cM^2\deln{\pi, \deln{\cH^\eta_t \otimes \cbrn{\emptyset, \cW^d} }_{t\in [0,1]}} \subseteq \cM^2\deln{\pi, \del{\del{\cH_t \otimes \cH_t}^\pi}_{t+ \in \sbr{0,1}}}.
    \end{equation} 
\end{theorem}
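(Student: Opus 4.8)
The statement is an ``if and only if'' so I would prove the two implications separately, and the key translation step is to recognize that the $H$-hypothesis for the pair of filtrations $\deln{\cH^\eta_t \otimes \cbrn{\emptyset,\cW^d}}$ and $\deln{(\cH_t\otimes\cH_t)^\pi}$ is, via the disintegration $\Theta_\pi$, exactly a statement about when $\omega\mapsto \bbE^\pi[Z\mid X]$ and $\omega\mapsto \bbE^{\Theta^\omega_\pi}[Z]$ agree for bounded $\cH_t\otimes\cH_t$-measurable $Z$. First I would fix the reduction: a process $N_t$ adapted to $\deln{\cH^\eta_t\otimes\cbrn{\emptyset,\cW^d}}$ is of the form $N_t(\omega,\omega') = n_t(\omega)$ with $n$ adapted to $(\cH^\eta_t)$, and $N\in\cM^2\deln{\pi,\cH^\eta_t\otimes\cbrn{\emptyset,\cW^d}}$ iff $n\in\cM^2(\eta,\cH^\eta_t)$. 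So \eqref{eqn:hhypothesis} says: every square-integrable $(\eta,(\cH^\eta_t))$-martingale $n$, viewed as $(\omega,\omega')\mapsto n_t(\omega)$, is a $\bigl(\pi,(\cH_t\otimes\cH_t)^\pi\bigr)$-martingale.

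\textbf{($\Leftarrow$, $H$-hypothesis $\Rightarrow$ causal.)} Assume \eqref{eqn:hhypothesis}. To verify \eqref{eqn:causaldef}, fix $t$ and $B\in\cF_t$ (by Remark \ref{remark:causalcpls}(2) it suffices to treat $B\in\cF_t$); I want $\omega\mapsto\Theta^\omega_\pi(B)$ to be $\cH_t^\eta$-measurable. Consider the bounded $(\pi,(\cH_s\otimes\cH_s))$-martingale $P_s := \bbE^\pi[\mathbbm 1_{Y\in B}\mid (\cH_s\otimes\cH_s)^\pi]$; at time $1$ it equals $\mathbbm 1_{\{Y\in B\}}$ and at time $t$, since $B\in\cF_t\subseteq\cH_t$, we get $P_t = \bbE^\pi[\mathbbm 1_{Y\in B}\mid \cH_t\otimes\cH_t]$. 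On the other hand $M_s := \bbE^\pi[\mathbbm 1_{Y\in B}\mid \cH^\eta_s\otimes\cbrn{\emptyset,\cW^d}] = \Theta^\omega_\pi(B)$-type object, more precisely $M_s(\omega,\omega') = \bbE^\eta[\Theta^{\cdot}_\pi(B)\mid\cH^\eta_s](\omega)$ is a square-integrable $\bigl(\eta,(\cH^\eta_s)\bigr)$-martingale lifted to the product; by hypothesis it is also a $\bigl(\pi,(\cH_s\otimes\cH_s)^\pi\bigr)$-martingale. Both $M$ and $P$ are martingales in the larger filtration with the same terminal value $\mathbbm 1_{\{Y\in B\}}$ (using that $\cF_1\otimes\cbrn{\emptyset,\cW^d}\subseteq\cF_1\otimes\cF_1$ and $M_1 = \bbE^\pi[\mathbbm 1_{Y\in B}\mid X] $; one checks this terminal identity from the disintegration), hence by uniqueness of c\`adl\`ag martingales with given terminal value $M_t = P_t$ $\pi$-a.s. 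Therefore $\bbE^\pi[\mathbbm 1_{Y\in B}\mid \cH_t\otimes\cH_t]$ is $\bigl(\cH^\eta_t\otimes\cbrn{\emptyset,\cW^d}\bigr)$-measurable, i.e.\ depends only on $\omega$ in a $\cH^\eta_t$-measurable way, which (after disintegrating) is precisely the $\cH^\eta_t$-measurability of $\omega\mapsto\Theta^\omega_\pi(B)$.

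\textbf{($\Rightarrow$, causal $\Rightarrow$ $H$-hypothesis.)} Assume $\pi$ is causal. Take $n\in\cM^2(\eta,(\cH^\eta_t))$ and set $N_t(\omega,\omega'):=n_t(\omega)$; clearly $N\in\cM^2\deln{\pi,\cH^\eta_t\otimes\cbrn{\emptyset,\cW^d}}$ and I must show $N$ is an $\bigl(\pi,(\cH_t\otimes\cH_t)^\pi\bigr)$-martingale, i.e.\ for $s\le t$, $\bbE^\pi[N_t\mid (\cH_s\otimes\cH_s)^\pi] = N_s$. It suffices to test against products $\mathbbm 1_{A}(\omega)\mathbbm 1_{B}(\omega')$ with $A\in\cH_s$, $B\in\cH_s$ (these generate $\cH_s\otimes\cH_s$); so I need $\bbE^\pi[n_t(X)\mathbbm 1_A(X)\mathbbm 1_B(Y)] = \bbE^\pi[n_s(X)\mathbbm 1_A(X)\mathbbm 1_B(Y)]$. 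Disintegrating, the left side is $\int \mathbbm 1_A(\omega) n_t(\omega)\Theta^\omega_\pi(B)\,\eta(d\omega)$. Causality gives that $\omega\mapsto\Theta^\omega_\pi(B)$ is $\cH^\eta_s$-measurable; since $n$ is a $(\eta,(\cH^\eta_t))$-martingale, $\bbE^\eta[n_t\mid\cH^\eta_s]=n_s$, so $\int \mathbbm 1_A n_t\,\Theta^\cdot_\pi(B)\,d\eta = \int\mathbbm 1_A n_s\,\Theta^\cdot_\pi(B)\,d\eta$, which is the right side. This proves the martingale property, hence \eqref{eqn:hhypothesis}. \emph{The main obstacle} I anticipate is the careful bookkeeping around the completions and right-continuity: one must make sure the lifted martingale $M$ really is $\bigl(\pi,(\cH_t\otimes\cH_t)^\pi\bigr)$-adapted and c\`adl\`ag, that ``it suffices to test on product sets'' is legitimate for the right-continuous completed product filtration, and that the terminal-value identification $M_1 = P_1$ is valid — essentially the content of the original observation in \cite{causalopttptacciaio}, which I would cite while spelling out these measure-theoretic details.
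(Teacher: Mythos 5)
Your approach differs from the paper's: the paper simply invokes the known equivalence between causality and the $H$-hypothesis for the \emph{uncompleted} filtrations (\cite[Theorem 6]{bremaud1978changes} or \cite[Remark 2.3]{causalopttptacciaio}) and then only does the bookkeeping to pass to the completed, right-continuous version via \cite[Lemma 67.10]{rogers2000diffusions}; you are re-deriving that equivalence from scratch via the disintegration. Your $\Rightarrow$ direction is correct in outline and is essentially the standard computation, modulo the monotone-class/right-continuity bookkeeping that you flag yourself. However, the $\Leftarrow$ direction contains a genuine error.

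The problem is the step ``both $M$ and $P$ are martingales in the larger filtration with the same terminal value $\mathbbm 1_{\{Y\in B\}}$, hence $M_t=P_t$.'' The two martingales do \emph{not} have the same terminal value: $P_1=\mathbbm 1_{\{Y\in B\}}$, whereas $M_1 = \bbE^\pi[\mathbbm 1_{Y\in B}\mid X]=\Theta^X_\pi(B)$, which you even write out in the parenthetical. Uniqueness of c\`adl\`ag martingales therefore gives nothing, and indeed the purported conclusion is false as stated: for $B\in\cF_t\subseteq\cH_t$ one has $\bbE^\pi[\mathbbm 1_{Y\in B}\mid\cH_t\otimes\cH_t]=\mathbbm 1_{Y\in B}$, which is not $\cH^\eta_t\otimes\{\emptyset,\cW^d\}$-measurable for a general (non-Monge) coupling $\pi$. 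What you actually need is $M_t=M_1$ $\pi$-a.s., i.e.\ that $\Theta^X_\pi(B)$ is already $\cF_t$-measurable, where $\cF_t:=\cH^\eta_t\otimes\{\emptyset,\cW^d\}$. The correct route is an $L^2$-orthogonality argument: writing $Z:=\mathbbm 1_{\{Y\in B\}}$ and $M_s:=\bbE^\pi[Z\mid\cF_s]$, the $H$-hypothesis makes $M$ a $\cG$-martingale for $\cG_t:=((\cH_t\otimes\cH_t)^\pi)_{t+}$, and since $Z$ is $\cG_t$-measurable,
\[
\bbE^\pi[M_1^2]=\bbE^\pi[M_1 Z]=\bbE^\pi\big[\bbE^\pi[M_1\mid\cG_t]\,Z\big]=\bbE^\pi[M_t Z]=\bbE^\pi\big[M_t\,\bbE^\pi[Z\mid\cF_t]\big]=\bbE^\pi[M_t^2],
\]
so orthogonality of $\cF$-martingale increments gives $\bbE^\pi[(M_1-M_t)^2]=0$, hence $M_1=M_t$ and $\Theta^X_\pi(B)$ is $\cF_t$-measurable. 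This is just the implication $(H1)\Rightarrow(H3)$ of Br\'emaud--Yor, which is precisely what the paper delegates to the literature.
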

\begin{proof}
\label{proof:thmequivctdcausality}

From \cite[Theorem $6$]{bremaud1978changes} or \cite[Remark $2.3$]{causalopttptacciaio}, $\pi \in \Pi_c\del{\eta,\nu}$ if and only if   the H-hypothesis holds between filtrations $\deln{\cH_t \otimes \cbrn{\emptyset, \cW^d}}_{t\in \sbr{0,1}}$ and $\del{\cH^\eta_t \otimes \cH_t}_{t\in \sbr{0,1}}$ on $\deln{\cW^d \times \cW^d, \cF_1 \otimes \cF_1, \pi}$. Hence, it is clear that $ii. \implies i.$ since $\cH_t^\eta \otimes \cH_t \subseteq \cap_{\epsilon > 0} (\cH_{t + \epsilon} \otimes \cH_{t + \epsilon}^{\pi})$ for all $t \in \sbr{0,1}$. 

To prove $i. \implies ii.$, let $\del{M_t}_{t \in \sbr{0,1}}$ be a $\deln{\pi, \deln{\cH^\eta_t \otimes \cbrn{\emptyset, \cW^d}}}$-martingale, which we can assume without loss of generality to be c\'adl\'ag. It is straightforward to verify that $\del{M_t}_{t \in \sbr{0,1}}$ is a $\deln{\pi, \deln{\cH^\eta_t \otimes \cH_t}}$-martingale by causality of $\pi$ in light of \cite[Theorem $6$]{bremaud1978changes} or \cite[Remark $2.3$]{causalopttptacciaio}. Since $\del{M_t}_{t \in \sbr{0,1}}$ is c\`adl\`ag, it is even a $\deln{\pi,\del{\del{\cH_t \otimes \cH_t}^\pi}_{t+ \in \sbr{0,1}}}$-martingale by \cite[Lemma $67.10$]{rogers2000diffusions} as required.
\end{proof}
\begin{remark}
    Note that the filtrations in the above theorem are   complete and right-continuous. In particular, martingales have c\`adl\`ag modifications and up to standard localization techniques, it is sufficient to consider bounded martingales for checking the $H$-hypothesis. One can obtain many other equivalent conditions to causality as a result of \cite[Theorem $3$]{bremaud1978changes}.
\end{remark}

\section{Causal Transport  between weak solutions of stochastic differential equations}
\label{section:causaltptsbetweensdes}
\subsection{Bicausal couplings of Wiener measures}

We first examine bicausal couplings of (d-dimensional) Wiener measures.
\begin{theorem}[Bicausal couplings of Wiener measures]
  \label{thm:bicausaltWienercouplings}
    $\pi \in \Pi_{bc}\deln{\mathbb{W}^d,\mathbb{W}^d }$ if and only if  there exists a stochastic basis $\del{\Omega, \mathcal{G}, \del{\mathcal{G}_t}_{t \in \sbr{0,1}}, \mathbb{P}}$ 
and two $d$-dimensional $\del{\mathcal{G}_t}_{t \in \sbr{0,1}}$-Brownian motions $B  $ and $\tilde{B}  $
          such that   $\pi$ is the joint law of  $(B,\tilde{B})$.\\
              In particular, there exists a $\del{\mathcal{G}_t}_{t \in \sbr{0,1}}$-progressively measurable  process $\rho : \sbr{0,1} \times \Omega\rightarrow \mathcal{C}^d$ such that \[
  \forall t \in \sbr{0,1} \;\qquad  [B, \tilde{B}]_t = \int_0^t \rho_s ds, \;  \pi\text{-a.s.}.
    \]
\end{theorem}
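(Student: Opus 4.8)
The plan is to prove the two directions separately, the ``if'' direction being essentially a tautology and the ``only if'' direction requiring the $H$-hypothesis characterization of Theorem~\ref{thm:equivctdscausality} together with Lévy's characterization of Brownian motion.

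\emph{The ``if'' direction.} Suppose $\pi$ is the joint law of two $(\mathcal{G}_t)$-Brownian motions $B, \tilde B$ on some stochastic basis $(\Omega, \mathcal{G}, (\mathcal{G}_t)_{t\in[0,1]}, \mathbb{P})$. Then $X_{\#}\pi = \mathbb{W}^d = Y_{\#}\pi$, so $\pi\in\Pi(\mathbb{W}^d,\mathbb{W}^d)$. To check causality, by Theorem~\ref{thm:equivctdscausality} it suffices to show every bounded $(\pi, (\mathcal{H}^{\mathbb{W}^d}_t\otimes\{\emptyset,\mathcal{W}^d\}))$-martingale is a $(\pi, ((\mathcal{H}_t\otimes\mathcal{H}_t)^\pi)_{t+})$-martingale. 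Such a martingale is, modulo the identification of $\pi$ with $\mathrm{Law}(B,\tilde B)$, a martingale of the form $f(B_{\cdot\wedge s})$-type, i.e.\ a martingale with respect to the $\mathbb{P}$-augmentation of the filtration generated by $B$ alone; since $B$ is a $(\mathcal{G}_t)$-Brownian motion, by the martingale representation theorem it is an integral against $B$, and such stochastic integrals remain martingales with respect to the larger filtration $\sigma(B_{\cdot\wedge t}, \tilde B_{\cdot\wedge t})$ (which maps to $\mathcal{H}_t\otimes\mathcal{H}_t$ under the coordinate map). The symmetric argument exchanging the roles of $B$ and $\tilde B$ gives bicausality, hence $\pi\in\Pi_{bc}(\mathbb{W}^d,\mathbb{W}^d)$.

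\emph{The ``only if'' direction.} Conversely, suppose $\pi\in\Pi_{bc}(\mathbb{W}^d,\mathbb{W}^d)$. Take the stochastic basis to be the canonical one: $\Omega := \mathcal{W}^d\times\mathcal{W}^d$, $\mathcal{G} := (\mathcal{F}_1\otimes\mathcal{F}_1)^\pi$, $\mathcal{G}_t := ((\mathcal{H}_t\otimes\mathcal{H}_t)^\pi)_{t+}$ (the right-continuous $\pi$-completion), $\mathbb{P} := \pi$, and set $B := X$, $\tilde B := Y$, the two marginal processes. Then $\pi$ is by construction the joint law of $(B,\tilde B)$, so it remains only to show $B$ and $\tilde B$ are each $(\mathcal{G}_t)$-Brownian motions. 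By Lévy's characterization it suffices to show that $B$ is a continuous $(\mathcal{G}_t)$-local martingale (indeed an $L^2$-martingale) with $[B^i, B^j]_t = \delta_{ij}t$, and likewise for $\tilde B$. Continuity is immediate. That $B$ is a $(\mathcal{G}_t)$-martingale: $X$ is a $(\mathbb{W}^d, (\mathcal{H}_t))$-Brownian motion, hence the coordinate functions $X^i_t$ and $X^i_t X^j_t - \delta_{ij}t$ are $(\mathbb{W}^d, (\mathcal{H}_t^{\mathbb{W}^d}))$-martingales; these lift to $(\pi, (\mathcal{H}_t^{\mathbb{W}^d}\otimes\{\emptyset,\mathcal{W}^d\}))$-martingales on the product, so causality of $\pi$ and Theorem~\ref{thm:equivctdscausality} make them $(\pi,(\mathcal{G}_t))$-martingales. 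The quadratic variation is then unchanged (quadratic variation of a continuous semimartingale is a pathwise/probabilistic notion invariant under filtration enlargement within which it remains a semimartingale), so $[B^i,B^j]_t = \delta_{ij}t$, $\pi$-a.s. Thus $B$ is a $(\mathcal{G}_t)$-Brownian motion; applying the \emph{reverse} causality of $\pi$ (i.e.\ $R_{\#}\pi\in\Pi_c(\mathbb{W}^d,\mathbb{W}^d)$) gives the same conclusion for $\tilde B$.

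\emph{The cross-variation statement.} Finally, with $\pi = \mathrm{Law}(B,\tilde B)$ for $(\mathcal{G}_t)$-Brownian motions $B,\tilde B$, the $\mathbb{R}^{d\times d}$-valued process $\langle B,\tilde B\rangle_t = [B,\tilde B]_t$ is a continuous adapted process of finite variation, and by the Kunita--Watanabe inequality, for each pair $i,j$, $\mathrm{d}[B^i,\tilde B^j]_t$ is absolutely continuous with respect to $\mathrm{d}[B^i,B^i]_t = \mathrm{d}t$ with density bounded by $1$ in absolute value; hence $[B,\tilde B]_t = \int_0^t \rho_s\,\mathrm{d}s$ for a $(\mathcal{G}_t)$-progressively measurable $\rho : [0,1]\times\Omega\to\mathbb{R}^{d\times d}$ (obtained, e.g., as a Radon--Nikodym derivative $\rho_s = \lim_{h\downarrow 0}\frac1h([B,\tilde B]_s - [B,\tilde B]_{s-h})$, which is progressive as an a.s.\ limit of progressive processes). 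To see $\rho_s\in\mathcal{C}^d$ for a.e.\ $s$, $\pi$-a.s., observe that for any $u,v\in\mathbb{R}^d$ the process $(u,B_t)_{\mathbb{R}^d} + (v,\tilde B_t)_{\mathbb{R}^d}$ is a real $(\mathcal{G}_t)$-martingale whose quadratic variation is $\big(\|u\|^2 + \|v\|^2 + 2(u,\rho_\cdot v)\big)$ integrated in time, which must be non-decreasing; since $s\mapsto \rho_s$ is only defined a.e., a density-point / Lebesgue-differentiation argument upgrades ``the time integral of $\|u\|^2+\|v\|^2+2(u,\rho_s v)$ is non-decreasing for all rational $u,v$'' to ``$\rho_s\in\mathcal{C}^d$ for Lebesgue-a.e.\ $s$, $\pi$-a.s.'', using the stated characterization of $\mathcal{C}^d$ via positive semi-definiteness of $\left(\begin{smallmatrix}\mathrm{Id}_d & C\\ C^* & \mathrm{Id}_d\end{smallmatrix}\right)$.

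\emph{Main obstacle.} The delicate point is the ``only if'' direction's verification that quadratic variation is preserved under the filtration enlargement from $(\mathcal{H}_t^{\mathbb{W}^d})$ to $(\mathcal{G}_t)$: one must argue that $X$ remains a semimartingale (here, a martingale, which is what causality gives) in the larger filtration and that quadratic variation, being a limit in probability of sums of squared increments, depends only on the process and not the filtration. A clean way is to note $X^iX^j - \delta_{ij}t$ is a $(\mathcal{G}_t)$-martingale by the same lifting argument, and then invoke uniqueness of the Doob--Meyer decomposition to identify $[X^i,X^j]_t = \delta_{ij}t$. Care is also needed with completions and right-continuity so that Theorem~\ref{thm:equivctdscausality} applies verbatim, and with the measurable-selection/Lebesgue-differentiation step producing a genuinely progressive $\rho$ with values in $\mathcal{C}^d$.
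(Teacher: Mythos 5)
Your proposal is correct and gets to the same place with the same essential ingredients --- the $H$-hypothesis equivalence (Theorem~\ref{thm:equivctdscausality}), a martingale representation theorem, and L\'evy's characterization of Brownian motion --- but the route differs from the paper's in a meaningful way. The paper proves Theorem~\ref{thm:bicausaltWienercouplings} solely as the specialization of Theorem~\ref{thm:bicausaltptplanbetweenSDEs} to $\sigma=\overline{\sigma}=\mathrm{Id}_d$, $b=\overline{b}=0$, and the proof of that more general statement goes through \"Ust\"unel's martingale representation theorem for weak SDE solutions (Proposition~\ref{prop:degenMRPoncanonicalspace}), a direct conditional-expectation computation showing the canonical semimartingale's martingale part remains a martingale in the enlarged filtration, and --- for the necessity direction --- a construction of Brownian motions via Moore--Penrose pseudoinverses plus an auxiliary independent Brownian motion to handle the possibly degenerate diffusion coefficient. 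You instead prove the Wiener case directly: classical Brownian martingale representation in the ``if'' direction, and L\'evy's characterization in the ``only if'' direction, entirely avoiding the pseudoinverse/auxiliary-Brownian machinery (which is trivialized when $\sigma=\mathrm{Id}_d$). That is cleaner and self-contained for this special case, though it does not buy the general SDE theorem. For the concluding assertion about the correlation process $\rho$, the paper simply cites \cite[Lemma~5]{emery2005certain}, whereas you reconstruct the standard proof (Kunita--Watanabe gives $d[B,\tilde B]\ll dt$, Lebesgue differentiation gives a progressive density, and positivity of the quadratic variation of $(u,B)+(v,\tilde B)$ together with a density-of-rationals argument forces $\rho_s\in\mathcal{C}^d$ a.e.), which is essentially what \'Emery's lemma says. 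Two small points of care, both of which you flag: the lifted martingales $X^i_t X^j_t - \delta_{ij} t$ are not bounded, so you should either check they are in $\mathcal{M}^2$ over $[0,1]$ (they are, by Gaussian moment bounds) or localize before applying Theorem~\ref{thm:equivctdscausality}; and the quadratic-variation-preservation step is cleanest via uniqueness of the Doob--Meyer decomposition after lifting $X^iX^j-\delta_{ij}t$, rather than the slightly hand-wavy ``quadratic variation is pathwise'' remark.
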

As this is a special case of Theorem \ref{thm:bicausaltptplanbetweenSDEs}, we defer the proof to Section \ref{subsection:proofs}.

\subsection{Bicausal couplings between solutions of SDEs}

We now provide a  characterisation of bicausal couplings and Monge transports between weak solutions of SDEs. We first present the main results, Theorem \ref{thm:bicausaltptplanbetweenSDEs} and \ref{thm:bicausaltptmapsbetweenSDEsdegen}, and their corollaries and defer technical proofs to Section \ref{subsection:proofs}. Let \[
\mathscr{A}^{d,d'} := \cbr{\alpha : \sbr{0,1} \times \mathcal{W}^d \rightarrow \mathbb{R}^{d \times d'} : \alpha \text{ is } \del{\mathcal{F}_t}_{t \in \sbr{0,1}} \text{-progressively measurable} }
\] for $d, d' \geq 1$.  We consider the SDEs \begin{gather}
\label{eqn:firstsde}
dZ_t = b\del{t,Z}dt + \sigma\del{t,Z} dB_t, \quad Z_0 = z\\
\label{eqn:2ndsde}
d\tilde{Z}_t = \overline{b}\deln{t,\tilde{Z}} dt + \overline{\sigma}\deln{t,\tilde{Z}} d\tilde{B}_t, \quad \tilde{Z}_0 = \tilde{z}.
\end{gather}
where $\sigma,\overline{\sigma} \in \mathscr{A}^{d,d}$ and $b, \overline{b} \in \mathscr{A}^{d,1}$ are such that
\begin{assumption}
    \label{assumption:weakuniquenessassumption}
    There exists a unique weak solution  $\mu^{\sigma,b}_z \in \mathcal{P}\deln{\mathcal{W}^d}$ (resp. 
    $\mu^{\overline{\sigma}, \overline{b}}_{\tilde{z}} \in \mathcal{P}\deln{\mathcal{W}^d}$) for \eqref{eqn:firstsde} (resp. for \eqref{eqn:2ndsde}).
\end{assumption}

The following theorem is a characterization of the set of bicausal couplings between the solutions of SDEs. It generalizes \cite[Proposition $2.2$]{backhoffveraguas2024adaptedwassersteindistancelaws} to the case where $d > 1$ and the SDEs may not necessarily have strong solutions. 

\begin{theorem}
    \label{thm:bicausaltptplanbetweenSDEs}
    $\pi \in \Pi_{bc}\deln{\mu^{\sigma,b}_z, \mu^{\overline{\sigma}, \overline{b}}_{\tilde{z}}}$ if and only if  there exists a stochastic basis $\del{\Omega, \mathcal{G}, \del{\mathcal{G}_t}_{t \in \sbr{0,1}}, \mathbb{P}}$ 
and two $d$-dimensional $\del{\mathcal{G}_t}_{t \in \sbr{0,1}}$-Brownian motions $B = \del{B_t}_{t \in \sbr{0,1}}$ and $\tilde{B} = \deln{\tilde{B}_t}_{t \in \sbr{0,1}}$
        and $\del{\mathcal{G}_t}_{t \in \sbr{0,1}}$-adapted processes $Z = \deln{Z_t}_{t \in \sbr{0,1}}$, $\tilde{Z} = \deln{\tilde{Z}_t}_{t \in \sbr{0,1}}$ such that 
        \begin{itemize}
            \item $\deln{Z,B}$ and  $\deln{\tilde{Z},\tilde{B}}$ satisfy  \eqref{eqn:firstsde} and \eqref{eqn:2ndsde} respectively, and
            \item $\pi$ is the joint law of  $(Z,\tilde{Z})$: $\pi = \deln{Z,\tilde{Z}}_{\#}\mathbb{P}$.
        \end{itemize}
        In particular, the joint law of $(B,\tilde{B})$ under $\mathbb{P}$ is a bicausal Wiener coupling. 
\end{theorem}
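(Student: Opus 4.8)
The plan is to prove the two implications separately, starting with the ``if'' direction, because the last sentence of the theorem and all of Theorem~\ref{thm:bicausaltWienercouplings} drop out of it on specializing to the trivial SDEs $\sigma=\overline\sigma=\mathrm{Id}_d$, $b=\overline b=0$. Throughout write $\eta:=\mu^{\sigma,b}_z$, $\nu:=\mu^{\overline\sigma,\overline b}_{\tilde z}$, and recall that bicausality of $\pi$ means both $\pi$ and $R_\#\pi$ are causal, i.e.\ (Theorem~\ref{thm:equivctdscausality}) satisfy the relevant $H$-hypothesis. For sufficiency, suppose $(\Omega,\cG,(\cG_t),\bbP)$, Brownian motions $B,\tilde B$ and adapted $Z,\tilde Z$ are given as in the statement, and set $\pi:=(Z,\tilde Z)_\#\bbP$. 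Since $(Z,B)$ solves \eqref{eqn:firstsde}, $Z$ is a weak solution and Assumption~\ref{assumption:weakuniquenessassumption} forces $Z_\#\bbP=\eta$, and likewise $\tilde Z_\#\bbP=\nu$, so $\pi\in\Pi(\eta,\nu)$. For causality I would use the restarting property of weak solutions: fix $t$, observe that $(B_{t+u}-B_t)_{u\in[0,1-t]}$ is a Brownian motion independent of $\cG_t$, and that by progressive measurability of $(b,\sigma)$ the process $(Z_{t+u})_{u\in[0,1-t]}$ solves \eqref{eqn:firstsde} restarted from the $\sigma(Z_s:s\le t)$-measurable history $(Z_s)_{s\le t}$ and driven by that independent Brownian motion. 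Well-posedness of \eqref{eqn:firstsde} transfers to well-posedness of the restarted equation by a routine conditioning argument, so the $\cG_t$-conditional law of $(Z_{t+u})_u$ is a measurable function of $(Z_s)_{s\le t}$ alone; since $\sigma(\tilde Z_s:s\le t)\subseteq\cG_t$, this yields $\sigma(Z)\perp\sigma(\tilde Z_s:s\le t)\,\vert\,\sigma(Z_s:s\le t)$. Pushing this forward by $(Z,\tilde Z)$ and using Remark~\ref{remark:causalcpls} (it is enough to test $\cF_t$-sets) gives $\pi\in\Pi_c(\eta,\nu)$; exchanging the roles of $(Z,B)$ and $(\tilde Z,\tilde B)$ gives $R_\#\pi\in\Pi_c(\nu,\eta)$, hence $\pi\in\Pi_{bc}(\eta,\nu)$. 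Applying this with the trivial SDEs ($Z=B$, $\tilde Z=\tilde B$) shows any pair of Brownian motions on a common stochastic basis has bicausal joint law, which gives the ``in particular'' clause and one direction of Theorem~\ref{thm:bicausaltWienercouplings}.

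\textbf{Necessity, step 1: reducing to a continuous local martingale in a common filtration.} Given $\pi\in\Pi_{bc}(\eta,\nu)$, I work on $(\cW^d\times\cW^d,\cF_1\otimes\cF_1,\pi)$ with coordinates $X,Y$ and set $N_t:=X_t-z-\int_0^tb(s,X)\,ds$ and $\tilde N_t:=Y_t-\tilde z-\int_0^t\overline b(s,Y)\,ds$. Since $X_\#\pi=\eta$ is the unique weak solution of \eqref{eqn:firstsde}, $N$ is a continuous local martingale in the ($\pi$-augmented) filtration generated by $X$, with $\langle N^i,N^j\rangle_t=\int_0^t(\sigma\sigma^*)^{ij}(s,X)\,ds$, and symmetrically for $\tilde N$, $Y$, $\overline\sigma\overline\sigma^*$. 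Put $\cG_t:=\bigl((\cH_t\otimes\cH_t)^\pi\bigr)_{t+}$. Localizing $N$ at the exit times of $X$ from centred balls of radius $n$ produces bounded $\bigl(\pi,(\cH^\eta_t\otimes\{\emptyset,\cW^d\})\bigr)$-martingales, which by causality of $\pi$ and Theorem~\ref{thm:equivctdscausality} lie in $\cM^2\bigl(\pi,(\cG_t)\bigr)$; since those stopping times increase to $1$ $\pi$-a.s., $N$ is a $(\pi,\cG_t)$-local martingale. Running the same argument for the causal coupling $R_\#\pi$ and transporting the conclusion back along the exchange map $R$ — an involution under which $(\cH_t\otimes\cH_t)$ is invariant — shows $\tilde N$ is also a $(\pi,\cG_t)$-local martingale. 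Hence $(N,\tilde N)$ is a continuous $2d$-dimensional $(\pi,\cG_t)$-local martingale whose bracket, after polarization, has derivative the $2d\times 2d$ matrix with diagonal blocks $\sigma\sigma^*(s,X)$, $\overline\sigma\overline\sigma^*(s,Y)$ and an off-diagonal progressively measurable block $c_s$ (the cross bracket $\langle N^i,\tilde N^j\rangle$ being absolutely continuous in $t$ by Kunita--Watanabe), this matrix being positive semi-definite a.e.

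\textbf{Necessity, step 2: building the Brownian motions.} To obtain honest Brownian motions when $\sigma$ or $\overline\sigma$ may be degenerate, I enlarge to $\Omega':=(\cW^d)^4$ with $\bbP':=\pi\otimes\bbW^d\otimes\bbW^d$, the last two factors being independent Brownian motions $W,\tilde W$, and filtration $\cG'_t:=\bigl(\cG_t\otimes\cF^W_t\otimes\cF^{\tilde W}_t\bigr)^{\bbP'}$, and set $B:=\int_0^\cdot\sigma^\dag(s,X)\,dN_s+\int_0^\cdot(\mathrm{Id}_d-\sigma^\dag\sigma)(s,X)\,dW_s$, and analogously $\tilde B$ from $\tilde N,\overline\sigma,\tilde W$. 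The Moore--Penrose identities give $\langle B^i,B^j\rangle_t=\delta_{ij}t$ (and similarly for $\tilde B$), so $B,\tilde B$ are $(\cG'_t)$-Brownian motions by L\'evy's theorem; moreover $\int_0^\cdot\sigma(s,X)\,dB_s=N$ because $dN$ takes values in the range of $\sigma(s,X)$ a.e., so $(X,B)$ solves \eqref{eqn:firstsde} and $(Y,\tilde B)$ solves \eqref{eqn:2ndsde}, while $(X,Y)_\#\bbP'=\pi$. Taking $Z:=X$, $\tilde Z:=Y$ proves the claim, and $(B,\tilde B)_\#\bbP'$ is a bicausal Wiener coupling by the sufficiency direction applied to the trivial SDEs.

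\textbf{Main obstacle.} The delicate part is necessity, and within it two points. First, making $N$ \emph{and} $\tilde N$ local martingales for the \emph{same} filtration $\cG_t$: this requires the $H$-hypothesis (Theorem~\ref{thm:equivctdscausality}) applied to both $\pi$ and $R_\#\pi$, plus a localization step, since that theorem is stated for square-integrable martingales rather than local ones. Second, the passage from the bracket of $(N,\tilde N)$ to genuine Brownian motions in the degenerate case, which is exactly why the probability space must be enlarged by independent noise and why one must then check that the enlargement preserves both the coupling $\pi$ and the Brownian/semimartingale structure; the absolute continuity of $\langle N^i,\tilde N^j\rangle_t$ in $t$ is a further, routine, point. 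By contrast the sufficiency direction is comparatively soft, its only real content being the conditioning argument upgrading well-posedness of \eqref{eqn:firstsde} to well-posedness of its time-shifted version.
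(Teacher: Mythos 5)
Your necessity argument is essentially the same as the paper's: you use bicausality plus localization to show that both $N$ and $\tilde N$ are continuous local martingales for the common filtration $\bigl((\cH_t\otimes\cH_t)^\pi\bigr)_{t+}$, then enlarge by independent noise and apply the Moore--Penrose construction of \cite[Thm.\ 7.1$'$]{ikeda2014stochastic} to manufacture genuine Brownian motions. The only cosmetic difference is that you use two auxiliary Brownian motions $W,\tilde W$ where the paper uses a single $\hat B$ for both; either works.

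Your sufficiency argument, however, is a genuinely different route from the paper's, and it contains a real gap. The paper shows directly (by the explicit computation with $\mathbbm{1}_{Z^{-1}(A)\cap\tilde Z^{-1}(B)}$) that $M^\eta_t = X_t - z - \int_0^t b\,ds$ is a $\bigl(\pi,((\cH_t\otimes\cH_t)^\pi)_{t+}\bigr)$-local martingale, and then invokes the martingale representation result (Proposition~\ref{prop:degenMRPoncanonicalspace}, which rests on \"Ust\"unel's theorem): every square-integrable $\bigl(\pi,(\cH^\eta_t\otimes\{\emptyset,\cW^d\})\bigr)$-martingale is a stochastic integral against $\int \sigma^\dag\,dM^\eta$, hence is a local martingale in the larger filtration, which is precisely the $H$-hypothesis of Theorem~\ref{thm:equivctdscausality}. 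You instead try to reach the same $H$-hypothesis by asserting that ``well-posedness of \eqref{eqn:firstsde} transfers to well-posedness of the restarted equation by a routine conditioning argument,'' and from this deduce the conditional independence $\sigma(Z)\perp\sigma(\tilde Z_s:s\le t)\mid \sigma(Z_s:s\le t)$. That transfer is not routine, and it is exactly where the difficulty lives. Assumption~\ref{assumption:weakuniquenessassumption} gives weak uniqueness of \eqref{eqn:firstsde} only from the single initial condition $z$. Even for classical (non-path-dependent) SDEs, passing from weak uniqueness to the Markov/restarting property requires well-posedness from \emph{all} starting points in the relevant state space (cf.\ Stroock--Varadhan's treatment of the martingale problem); with path-dependent, possibly degenerate coefficients, one would moreover need uniqueness of the restarted SDE from $\eta$-a.e.\ initial \emph{path}, which is a strictly stronger and unverified hypothesis. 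Your two regular conditional distributions $\bbP(\,\cdot\mid\cG_t)$ and $\bbP(\,\cdot\mid\cF^Z_t)$ both solve the restarted problem with the same initial segment; without uniqueness of that restarted problem you cannot identify them. The desired conditional independence (equivalently, the $H$-hypothesis via Br\'emaud--Yor) is in fact logically tantamount to the restarted uniqueness you are trying to invoke, so the ``routine conditioning argument'' is circular as stated. The martingale representation step in the paper is precisely what breaks this circularity, because \"Ust\"unel's result holds under weak uniqueness from the single starting point $z$ alone.
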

In other words, all bicausal couplings may be obtained by coupling the Brownian motions in \eqref{eqn:firstsde} and \eqref{eqn:2ndsde}, using a bicausal coupling of Wiener measures.

Consider the case of bicausal couplings between  $d$-dimensional Wiener measures. Theorem \ref{thm:bicausaltptplanbetweenSDEs} gives that $\pi \in \Pi_{bc}\deln{\mathbb{W}^d, \mathbb{W}^d}$ if and only if $\pi$ is the joint law of two processes which are Brownian motions with respect to a \emph{common} filtration. A   consequence of this observation is that all bicausal couplings between SDEs with strong, pathwise unique  solutions are obtained by taking the image of bicausal Wiener couplings in $\Pi_{bc}\deln{\mathbb{W}^d, \mathbb{W}^d}$ under the respective Ito maps (Figure \ref{fig:bicausalcouplingcomposition}). 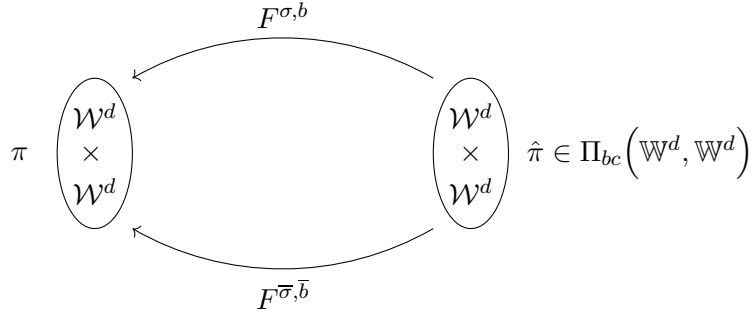
\begin{figure}
    \centering
    \begin{tikzpicture}
          \draw[->] (0,0.5) arc (60:120:4cm) node[midway,above]{$F^{\sigma,b}$};
          \draw[<-] (-4,-1.5) arc (240:300:4cm) node[midway,below]{$F^{\overline{\sigma},\overline{b}}$};
          \node[] at (-4.55,-0.5) {$\times$};
          \node[] at (-4.5,0) {$\mathcal{W}^d$};
          \node[] at (-4.5,-1) {$\mathcal{W}^d$};
          \node[] at (-5.5,-0.5) {$
          \pi$};
          \draw (-4.5,-0.5) ellipse (0.5cm and 1cm);
          \node[] at (0.5,-0.5) {$\times$};
          \node[] at (0.5,0) {$\mathcal{W}^d$};
          \node[] at (0.5,-1) {$\mathcal{W}^d$};
          \draw (0.5,-0.5) ellipse (0.5cm and 1cm);
          \node[] at (2.75,-0.5) {$\hat{\pi} \in \Pi_{bc}\del{\mathbb{W}^d, \mathbb{W}^d}$};
      \end{tikzpicture}
    \caption{ Bicausal couplings $\pi \in \Pi_{bc}\deln{\mu^{\sigma,b}_z, \mu^{\overline{\sigma}, \overline{b}}_{\tilde{z}}}$ between (strong) solutions of   SDEs  may be constructed by pushing forward bicausal Wiener couplings $\hat{\pi} \in \Pi_{bc}\deln{\bbW^d,\bbW^d}$ with the Ito maps $\deln{F^{\sigma,b}, F^{\overline{\sigma}, \overline{b}}}$.}
    \label{fig:bicausalcouplingcomposition}
\end{figure}
\begin{corollary}
    \label{corollary:pushforwardofbicausalwiener}
    Assume the SDEs \eqref{eqn:firstsde} and \eqref{eqn:2ndsde} have strong solutions satisfying  pathwise uniqueness, given by the Ito maps $F^{\sigma,b} : \deln{\mathcal{W}^d, \cF_1} \rightarrow \deln{\mathcal{W}^d, \cF_1}$ (resp. $F^{\overline{\sigma}, \overline{b}})$. Then all 
    bicausal couplings $\pi \in \Pi_{bc}\deln{\mu^{\sigma,b}_z, \mu^{\overline{\sigma}, \overline{b}}_{\tilde{z}}}$ between the  solutions of the SDEs  may be obtained by pushing forward bicausal Wiener couplings $\hat{\pi} \in \Pi_{bc}\deln{\bbW^d,\bbW^d}$ with the Ito maps $\deln{F^{\sigma,b}, F^{\overline{\sigma}, \overline{b}}}$:
    \[
    \Pi_{bc}\deln{\mu^{\sigma,b}_z, \mu^{\overline{\sigma}, \overline{b}}_{\tilde{z}}} = \cbr{\del{F^{\sigma,b}, F^{\overline{\sigma}, \overline{b}}}_{\#}\hat{\pi} : \hat{\pi} \in \Pi_{bc}\deln{\mathbb{W}^d, \mathbb{W}^d}}.
    \] 
\end{corollary}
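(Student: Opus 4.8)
The plan is to derive this as a direct consequence of Theorem \ref{thm:bicausaltptplanbetweenSDEs}, which already contains all the substance; the corollary merely reorganizes that statement under the extra hypothesis of pathwise uniqueness. I will prove the two inclusions separately.

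For the inclusion ``$\supseteq$'': let $\hat\pi \in \Pi_{bc}(\mathbb{W}^d,\mathbb{W}^d)$. By Theorem \ref{thm:bicausaltptplanbetweenSDEs} applied to $\bar\sigma = \sigma = \mathrm{Id}_d$ and $b=\bar b = 0$ (the Wiener case), $\hat\pi$ is the joint law under some stochastic basis $(\Omega,\mathcal{G},(\mathcal{G}_t),\mathbb{P})$ of two $(\mathcal{G}_t)$-Brownian motions $B,\tilde B$. Now form $Z := F^{\sigma,b}(B)$ and $\tilde Z := F^{\bar\sigma,\bar b}(\tilde B)$. By the defining property of the Ito map (and pathwise uniqueness, which guarantees the strong solution exists as a non-anticipative functional of the driving path), $(Z,B)$ solves \eqref{eqn:firstsde} and $(\tilde Z,\tilde B)$ solves \eqref{eqn:2ndsde} on this common basis, and $Z,\tilde Z$ are $(\mathcal{G}_t)$-adapted because $F^{\sigma,b}, F^{\bar\sigma,\bar b}$ are causal. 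Hence by the ``if'' direction of Theorem \ref{thm:bicausaltptplanbetweenSDEs}, the joint law of $(Z,\tilde Z)$ — which is precisely $(F^{\sigma,b},F^{\bar\sigma,\bar b})_{\#}\hat\pi$ — lies in $\Pi_{bc}(\mu^{\sigma,b}_z,\mu^{\bar\sigma,\bar b}_{\tilde z})$.

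For the inclusion ``$\subseteq$'': let $\pi \in \Pi_{bc}(\mu^{\sigma,b}_z,\mu^{\bar\sigma,\bar b}_{\tilde z})$. By the ``only if'' direction of Theorem \ref{thm:bicausaltptplanbetweenSDEs}, there is a stochastic basis carrying $(\mathcal{G}_t)$-Brownian motions $B,\tilde B$ and $(\mathcal{G}_t)$-adapted $Z,\tilde Z$ solving \eqref{eqn:firstsde}, \eqref{eqn:2ndsde} with $\pi = (Z,\tilde Z)_{\#}\mathbb{P}$. Set $\hat\pi := (B,\tilde B)_{\#}\mathbb{P}$; the last sentence of Theorem \ref{thm:bicausaltptplanbetweenSDEs} says $\hat\pi \in \Pi_{bc}(\mathbb{W}^d,\mathbb{W}^d)$. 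It remains to check $\pi = (F^{\sigma,b},F^{\bar\sigma,\bar b})_{\#}\hat\pi$, i.e. that $Z = F^{\sigma,b}(B)$ and $\tilde Z = F^{\bar\sigma,\bar b}(\tilde B)$ hold $\mathbb{P}$-a.s. This is exactly where pathwise uniqueness is used: since $(Z,B)$ is a solution of \eqref{eqn:firstsde} on the basis $(\Omega,\mathcal{G},(\mathcal{G}_t),\mathbb{P})$ and the SDE has a strong, pathwise unique solution, the Yamada--Watanabe theorem forces $Z = F^{\sigma,b}(B)$ a.s., and likewise for $\tilde Z$. Pushing the a.s. identity $(Z,\tilde Z) = (F^{\sigma,b}(B),F^{\bar\sigma,\bar b}(\tilde B))$ forward under $\mathbb{P}$ yields $\pi = (F^{\sigma,b},F^{\bar\sigma,\bar b})_{\#}\hat\pi$, completing the argument.

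The only genuine point requiring care — the ``main obstacle'', though it is mild — is the invocation of pathwise uniqueness / Yamada--Watanabe to conclude that a solution constructed on an \emph{a priori arbitrary} stochastic basis (the one furnished by Theorem \ref{thm:bicausaltptplanbetweenSDEs}, not necessarily the canonical Wiener basis) must coincide with the image of the Ito map applied to its own driving Brownian motion. One should be explicit that pathwise uniqueness is a statement about any two solutions driven by the same Brownian motion on the same basis, so it applies here, and that the Ito map $F^{\sigma,b}$ is the universal non-anticipative functional representing the strong solution; everything else is bookkeeping with pushforwards.
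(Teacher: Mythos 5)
Your proof is correct and follows essentially the same route as the paper's (which is just a one-line appeal to Theorem \ref{thm:bicausaltptplanbetweenSDEs}, noting strong uniqueness implies weak uniqueness). You have merely unpacked the bookkeeping — the two inclusions and the Yamada--Watanabe step for the ``$\subseteq$'' direction — that the paper leaves implicit.
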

\begin{proof} The construction is shown in Figure \ref{fig:bicausalcouplingcomposition}).
    Note that the assumption of unique, strong solutions automatically implies weak uniqueness so that Assumption \ref{assumption:weakuniquenessassumption} holds and we can use Theorem \ref{thm:bicausaltptplanbetweenSDEs}, which yields the desired result.
\end{proof}

The preceding corollary allows us to deduce the properties of $ \Pi_{bc}\deln{\mu^{\sigma,b}_z, \mu^{\overline{\sigma}, \overline{b}}_{\tilde{z}}}$ from the study of $\Pi_{bc}\deln{\mathbb{W}^d, \mathbb{W}^d}$, which we will use in Section \ref{section:applications}. 

\subsection{Characterisation of bicausal Monge transports}

For bicausal Monge maps, we first note that on $\deln{\cW^d, \cF_1, \mu^{\sigma,b}_z}$, the canonical process $\del{X_t}_{t \in \sbr{0,1}}$ is a $\deln{\mu^{\sigma,b}_z,\del{\cH_t}}$-semimartingale with decomposition \begin{equation}
    \label{eqn:canonicalsemimartdecomp}
    X_t = z + \int_0^t b\del{s,X} ds + M^{\mu^{\sigma,b}_z}_t \quad \mu^{\sigma,b}_z\text{-a.s. } t \in \sbr{0,1},
\end{equation} where $\deln{M^{\mu^{\sigma,b}_z}_t}$ is a $\deln{\mu^{\sigma,b}_z,\del{\cH_t}}$-local martingale with quadratic variation \begin{equation}
    \label{eqn:qvofmartcanonical}
    \sbr{M^{\mu^{\sigma,b}_z},M^{\mu^{\sigma,b}_z}}_t = \int_0^t \sigma\del{s,X}\sigma^*\del{s,X} ds \quad \mu^{\sigma,b}_z\text{-a.s. }t \in \sbr{0,1}.
\end{equation} We then have the following characterization:

\begin{theorem}[Characterisation of bicausal Monge transports]
\label{thm:bicausaltptmapsbetweenSDEsdegen}
    Let $\eta := \mu^{\sigma,b}_z $ and $\nu := \mu^{\overline{\sigma}, \overline{b}}_{\tilde{z}}$.
   For $T : \cW^d \rightarrow \cW^d$, define  $T_t\del{\omega} := T\del{\omega}\del{t} \in \mathbb{R}^{d}$ for $t \in \sbr{0,1}$. $T$ is a bicausal Monge map from $\eta$ to $\nu$ if and only if\;\footnote{We thank Y. Jiang for the sufficiency part of this theorem.} there exists some $\del{\mathcal{H}^\eta_{t}}_{t \in \sbr{0,1}}$-progressively measurable process $Q : \sbr{0,1} \times \cW^d \rightarrow \cO^d$ 
     such that $\del{T_t}_{t \in \sbr{0,1}}$ is a $\deln{\eta, \del{\cH^\eta_t}_{t \in \sbr{0,1}}}$-semimartingale satisfying 
     \begin{equation}
     \label{eqn:bicausalmongemapeqn}
     T_t = \tilde{z} + \int_0^t \overline{b}\del{s,T} ds + \int_0^t \overline{\sigma}\del{s,T} Q_s \sigma^\dag\del{s,X} dM^\eta_s 
     \; \eta\text{-a.s.}
     \end{equation}
      and
      \begin{equation}
      \label{eqn:Qstructureeqn}
     \overline{\sigma}\del{t,T} Q_t = \overline{\sigma}\del{t,T} Q_t \sigma^\dag\del{t,X}\sigma\del{t,X} \qquad d\eta\otimes dt\quad -{\rm a.e. }  
      \end{equation}
\end{theorem}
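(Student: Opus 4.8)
## Proof Proposal for Theorem~\ref{thm:bicausaltptmapsbetweenSDEsdegen}

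The plan is to reduce the statement to Theorem~\ref{thm:bicausaltptplanbetweenSDEs} applied to the special class of couplings $\pi_T = (X_\cdot, T)_\#\eta$ supported on the graph of $T$, and then extract the structural equation \eqref{eqn:bicausalmongemapeqn}--\eqref{eqn:Qstructureeqn} from the martingale representation theorem of \"Ust\"unel \cite{ustunel2019martingale} together with the Kunita--Watanabe / matrix square-root considerations forced by the quadratic variations \eqref{eqn:qvofmartcanonical}.

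\textbf{Necessity.} Suppose $T$ is a bicausal Monge map, so $\pi_T \in \Pi_{bc}(\eta,\nu)$. By Theorem~\ref{thm:bicausaltptplanbetweenSDEs} there is a stochastic basis carrying Brownian motions $B,\tilde B$ (with respect to a common filtration $(\mathcal{G}_t)$) and processes $Z,\tilde Z$ solving \eqref{eqn:firstsde}, \eqref{eqn:2ndsde} with $(Z,\tilde Z)_\#\mathbb{P} = \pi_T$. Because $\pi_T$ is carried by the graph of $T$, we have $\tilde Z = T(Z)$ $\mathbb{P}$-a.s., and since $T$ is causal, $\tilde Z$ is adapted to the $\mathbb{P}$-augmentation of the filtration generated by $Z$; transporting everything onto $(\cW^d,\cF_1,\eta)$ via $Z \mapsto X$ we may take $Z = X$, $\tilde Z_t = T_t$, and $M^\eta$ as in \eqref{eqn:canonicalsemimartdecomp}. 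The key point is then to compare the two Brownian-driven martingale parts. From \eqref{eqn:2ndsde}, $T_t - \tilde z - \int_0^t \overline b(s,T)\,ds = \int_0^t \overline\sigma(s,T)\,d\tilde B_s =: N_t$ is an $(\eta,(\cH^\eta_t))$-local martingale, and $M^\eta$ is likewise one; both live in the filtration generated by $X$. By \"Ust\"unel's martingale representation result \cite{ustunel2019martingale} applied on the (possibly degenerate) filtration generated by the $\eta$-semimartingale $X$ — whose martingale part is $M^\eta$ with $[M^\eta] = \int_0^\cdot \sigma\sigma^*(s,X)\,ds$ — every such local martingale $N$ can be written as $N_t = \int_0^t H_s\,dM^\eta_s$ for a predictable $H$ with $H_s = H_s \sigma^\dag\sigma(s,X)$ (i.e. $H$ takes values compatible with the range of $\sigma$). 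Matching quadratic variations, $\overline\sigma\overline\sigma^*(s,T)\,ds = d[N]_s = H_s\,\sigma\sigma^*(s,X)\,H_s^*\,ds$, so $H_s\sigma(s,X)$ and $\overline\sigma(s,T)$ have the same ``Gram matrix'' $d\eta\otimes dt$-a.e.; a measurable-selection argument (Kuratowski--Ryll-Nardzewski, using that the set of matrices with a prescribed Gram factorization relative to a fixed one is a measurably-varying homogeneous space under $\cO^d$) produces a progressively measurable $Q: [0,1]\times\cW^d \to \cO^d$ with $\overline\sigma(s,T)Q_s = H_s\sigma(s,X)$; multiplying by $\sigma^\dag(s,X)$ on the right and using $H_s = H_s\sigma^\dag\sigma(s,X)$ gives $H_s = \overline\sigma(s,T)Q_s\sigma^\dag(s,X)$, yielding \eqref{eqn:bicausalmongemapeqn}, while $\overline\sigma(s,T)Q_s = H_s\sigma(s,X)$ together with $H_s\sigma(s,X) = H_s\sigma^\dag\sigma(s,X)\,\sigma(s,X)$ rearranges to \eqref{eqn:Qstructureeqn}. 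Causality in the other direction (recovering $X$ from $T$) is automatic once one checks that $M^\eta$ can be reconstructed from $T$ via the relation above on $\ker\sigma^\perp$, and on $\ker\sigma$ the diffusion does not move $X$.

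\textbf{Sufficiency.} Conversely, given a progressively measurable $\cO^d$-valued $Q$ such that the $\eta$-semimartingale $(T_t)$ defined by \eqref{eqn:bicausalmongemapeqn} exists and \eqref{eqn:Qstructureeqn} holds, we must show $T_\# \eta = \nu$ and that $\pi_T$ is bicausal. For the first, the stochastic integral $\int_0^\cdot \overline\sigma(s,T)Q_s\sigma^\dag(s,X)\,dM^\eta_s$ is an $(\eta,(\cH^\eta_t))$-local martingale whose quadratic variation is $\int_0^\cdot \overline\sigma Q_s\sigma^\dag(\sigma\sigma^*)(\sigma^\dag)^* Q_s^*\overline\sigma^*\,ds$, which by \eqref{eqn:Qstructureeqn} (and $Q_s Q_s^* = \mathrm{Id}_d$, $\sigma^\dag\sigma\sigma^*(\sigma^\dag)^* = \sigma^\dag\sigma$ applied on the right factor) collapses to $\int_0^\cdot \overline\sigma\overline\sigma^*(s,T)\,ds$; hence by L\'evy's characterization there is (on an enlarged basis, if needed) a Brownian motion $\tilde B$ adapted to $(\cH^\eta_t)$ with $\int_0^\cdot \overline\sigma(s,T)\,d\tilde B_s$ equal to the martingale part of $T$, so $(T,\tilde B)$ solves \eqref{eqn:2ndsde}. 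Weak uniqueness (Assumption~\ref{assumption:weakuniquenessassumption}) forces $T_\#\eta = \nu$. Bicausality follows from Theorem~\ref{thm:bicausaltptplanbetweenSDEs}: $(X_\cdot, T)_\#\eta$ is realized on the basis $(\cW^d, \cF_1, (\cH^\eta_t),\eta)$ by the pair $(X, T)$ with $X$ solving \eqref{eqn:firstsde} (driven by the $\tilde B$-independent part of a Brownian motion, or directly by recovering $B$ from $M^\eta$) and $T$ solving \eqref{eqn:2ndsde}, both with respect to the \emph{same} filtration $(\cH^\eta_t)$ — which is exactly the condition in Theorem~\ref{thm:bicausaltptplanbetweenSDEs}; since $T$ is a function of $X$, the resulting coupling is $\pi_T$.

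\textbf{Main obstacle.} The delicate step is the measurable selection of the rotation field $Q$ and the careful bookkeeping on $\ker\sigma(s,X)$: when $\sigma$ is not invertible, $\sigma^\dag\sigma$ is the orthogonal projection onto $(\ker\sigma)^\perp$, and one must verify both that the integrand $\overline\sigma(s,T)Q_s\sigma^\dag(s,X)$ does not ``see'' the part of $M^\eta$ that is absent (handled by \eqref{eqn:Qstructureeqn}) and that the freedom in $Q$ on $\ker\sigma(s,X)$ is genuinely immaterial, so that $Q$ exists as a \emph{bona fide} progressively measurable $\cO^d$-valued — not merely partial-isometry-valued — process. Invoking \"Ust\"unel's representation theorem in the degenerate-filtration setting (rather than assuming a Brownian filtration) is what makes this work, and is also the place where one must be most careful that the hypotheses of \cite{ustunel2019martingale} are met by the filtration generated by a weak solution under Assumption~\ref{assumption:weakuniquenessassumption}.
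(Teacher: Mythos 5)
Your proposal follows essentially the same blueprint as the paper's proof: reduce to the assertion that $(T_t)$ is an $(\eta,(\cH^\eta_t))$-semimartingale, invoke \"Ust\"unel's representation theorem (as packaged in Proposition \ref{prop:degenMRPoncanonicalspace}), match quadratic variations with $\overline\sigma\overline\sigma^*(s,T)$, and extract an $\cO^d$-valued process $Q$ by a measurable polar-type factorization. Your sufficiency argument (quadratic-variation computation using \eqref{eqn:Qstructureeqn}, then L\'evy characterization and weak uniqueness) is also the same as the paper's. Two structural differences: you invoke Theorem \ref{thm:bicausaltptplanbetweenSDEs} directly to obtain both the adapted-semimartingale property of $T$ and the bicausality of $\pi_T$, whereas the paper isolates this equivalence in Lemma \ref{lemma:bicausalmongemapequivctd} (``$T$ is bicausal Monge $\iff$ $(T_t)$ is an $(\eta,(\cH^\eta_t))$-semimartingale''), which is cleaner because it avoids the filtration bookkeeping needed to carry the abstract realization of Theorem \ref{thm:bicausaltptplanbetweenSDEs} back to the canonical space; and you propose Kuratowski--Ryll-Nardzewski selection for $Q$, whereas the paper uses the measurable polar decomposition of Azoff \cite{azoff1974borel}. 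Both choices are serviceable.

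There is, however, a genuine algebraic error in the necessity step that propagates through the derivation. You write the representation as $N_t=\int_0^t H_s\,dM^\eta_s$ (integrand paired against $dM^\eta$) and impose the constraint $H_s = H_s\,\sigma^\dag\sigma(s,X)$, glossing it as ``compatible with the range of $\sigma$.'' These two things are inconsistent: $\sigma^\dag\sigma(s,X)$ is the orthogonal projection onto $(\ker\sigma(s,X))^\perp=\mathrm{range}\,\sigma^*(s,X)$, not onto $\mathrm{range}\,\sigma(s,X)$. The correct constraint for an integrand against $dM^\eta_s$ coming from Proposition \ref{prop:degenMRPoncanonicalspace} is $H_s = H_s\,\sigma\sigma^\dag(s,X)$: indeed, the representation there gives $H_s=\Xi_s\sigma^\dag(s,X)$ for a matrix $\Xi_s$, and then $H_s\sigma\sigma^\dag=\Xi_s\sigma^\dag\sigma\sigma^\dag=\Xi_s\sigma^\dag=H_s$ by the Moore--Penrose identity, while $H_s\sigma^\dag\sigma\ne H_s$ in general. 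This matters because your key rearrangement
\[
\overline\sigma(s,T)Q_s=H_s\sigma(s,X)\ \Longrightarrow\ H_s=\overline\sigma(s,T)Q_s\sigma^\dag(s,X)
\]
needs precisely $H_s\sigma\sigma^\dag=H_s$; with your stated constraint $H_s\sigma^\dag\sigma=H_s$ the step is false, and the same confusion underlies the claimed rearrangement leading to \eqref{eqn:Qstructureeqn}. Replace the constraint by $H_s=H_s\,\sigma\sigma^\dag(s,X)$ (or, equivalently, parametrize against $dL_s=\sigma^\dag dM^\eta_s$ as the paper does, where the relevant WLOG constraint is $\tilde H_s=\tilde H_s\sigma^\dag\sigma$ and the polar decomposition is applied to $\tilde H_s\sigma^\dag\sigma$ directly), and the argument goes through: from $\overline\sigma(s,T)Q_s=H_s\sigma(s,X)$ one gets $H_s=\overline\sigma Q_s\sigma^\dag$ by right-multiplying by $\sigma^\dag$, and substituting back yields $\overline\sigma Q_s=\overline\sigma Q_s\sigma^\dag\sigma$, i.e.\ \eqref{eqn:Qstructureeqn}.
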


Since $\sigma^\dag\del{s,X} \sigma\del{s,X}$ is an orthogonal projection onto the orthogonal complement of $\Ker \sigma\del{s,X}$ (see \cite[Section $5.5.4$]{golub2013matrix}), \[
\eqref{eqn:Qstructureeqn} \iff Q_s \Ker\del{\sigma\del{s,X}} \subseteq \Ker \del{\overline{\sigma}\del{s,T}}.
\]
\begin{proof}
    Let $P_s := \sigma^\dag\del{s,X}\sigma\del{s,X}$. Then, $    \overline{\sigma}\del{s,T}Q_s = \overline{\sigma}\del{s,T}Q_s P_s + \overline{\sigma}\del{s,T}Q_s\del{\text{Id}_d - P_s}$. Hence, $\eqref{eqn:Qstructureeqn} \iff \overline{\sigma}\del{s,T}Q_s\del{\text{Id}_d - P_s} = 0$. Since $\text{Id}_d - P_s$ is the orthogonal projection onto $\Ker \sigma\del{s,X}$, the statement follows.
\end{proof}
Equation \eqref{eqn:Qstructureeqn}
enables to identify situations where  bicausal Monge maps cannot exist:
\begin{corollary}
    \label{corollary:nobicausalmongemaps}
    Under the assumptions of Theorem \ref{thm:bicausaltptmapsbetweenSDEsdegen} 
    , if for all $s \in \sbr{0,1}$, \[
    \max_{\omega \in \cW^d} \dim \deln{\Ker\del{\overline{\sigma}\del{s, \omega}}} < \min_{\omega \in \cW^d} \dim \deln{\Ker\del{\sigma\del{s,\omega}}},
    \] there exists no bicausal Monge maps from $\eta$ to $\nu$.
\end{corollary}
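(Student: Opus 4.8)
The plan is to derive a contradiction by combining equation~\eqref{eqn:Qstructureeqn} (equivalently, the kernel inclusion $Q_s \Ker(\sigma(s,X)) \subseteq \Ker(\overline{\sigma}(s,T))$) with a dimension count, using the fact that $Q_s \in \cO^d$ is an isometry. Suppose, for contradiction, that some bicausal Monge map $T$ from $\eta$ to $\nu$ exists. Then Theorem~\ref{thm:bicausaltptmapsbetweenSDEsdegen} furnishes an $\del{\cH^\eta_t}_{t\in\sbr{0,1}}$-progressively measurable $Q : \sbr{0,1}\times\cW^d \to \cO^d$ satisfying \eqref{eqn:Qstructureeqn}, and hence, by the remark following the theorem,
\[
Q_s \Ker\del{\sigma\del{s,X}} \subseteq \Ker\del{\overline{\sigma}\del{s,T}} \qquad d\eta\otimes dt\text{-a.e.}
\]
Since $Q_s$ is orthogonal, it is in particular injective, so $\dim\del{Q_s \Ker\del{\sigma\del{s,X}}} = \dim\del{\Ker\del{\sigma\del{s,X}}}$. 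Therefore, for $d\eta\otimes dt$-a.e.\ $(s,\omega)$,
\[
\dim\del{\Ker\del{\sigma\del{s,\omega}}} \;=\; \dim\del{Q_s \Ker\del{\sigma\del{s,\omega}}} \;\le\; \dim\del{\Ker\del{\overline{\sigma}\del{s,T\del{\omega}}}}.
\]
But the right-hand side is at most $\max_{\omega'\in\cW^d}\dim\del{\Ker\del{\overline{\sigma}\del{s,\omega'}}}$ and the left-hand side is at least $\min_{\omega'\in\cW^d}\dim\del{\Ker\del{\sigma\del{s,\omega'}}}$, which by hypothesis is strictly larger. This contradicts the existence of even a single $(s,\omega)$ in the set of full $d\eta\otimes dt$-measure, completing the proof.

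The one point that needs a little care — and which I expect to be the only genuine obstacle — is the measure-theoretic bookkeeping: \eqref{eqn:Qstructureeqn} holds only $d\eta\otimes dt$-almost everywhere, so I must argue that the contradiction is reached on a set of positive $d\eta\otimes dt$-measure rather than merely ``somewhere.'' Concretely, I would note that the strict inequality in the hypothesis is assumed to hold for \emph{all} $s\in\sbr{0,1}$ with fixed finite quantities on both sides (the max and min over $\omega$ are genuine maxima/minima since the dimensions take only finitely many integer values in $\{0,1,\dots,d\}$), so the failure set of the derived inequality $\dim\Ker\sigma(s,\omega) \le \dim\Ker\overline{\sigma}(s,T(\omega))$ is in fact \emph{all} of $\sbr{0,1}\times\cW^d$; since $d\eta\otimes dt$ is not the zero measure, this immediately contradicts the a.e.\ validity of the inclusion above. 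No regularity of $\sigma$, $\overline{\sigma}$, or $T$ beyond what Theorem~\ref{thm:bicausaltptmapsbetweenSDEsdegen} already provides is needed, and the argument is short.

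One should also record the trivial structural observation that makes the statement meaningful: the hypothesis forces $\min_{\omega}\dim\Ker(\sigma(s,\omega)) \ge 1$, i.e.\ $\sigma(s,\cdot)$ is everywhere singular, so this criterion applies precisely in the genuinely degenerate regime; when $\sigma$ is invertible the hypothesis can never be satisfied (the right-hand side would be $0$), consistent with the fact that invertible diffusion coefficients are exactly the setting in which bicausal Monge maps are abundant (cf.\ Corollaries~\ref{corollary:strongsolnsdebicausalmongemap} and \ref{corollary:existenceofbicausalmongemapssimpleeg}). I would include this one-sentence remark after the proof for context, but it plays no role in the argument itself.
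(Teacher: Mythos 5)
Your proof is correct and is exactly the argument the paper has in mind: the paper leaves the proof implicit, relying on the remark preceding the corollary that rephrases \eqref{eqn:Qstructureeqn} as the kernel inclusion $Q_s \Ker\del{\sigma\del{s,X}} \subseteq \Ker\del{\overline{\sigma}\del{s,T}}$, and the dimension count via injectivity of $Q_s \in \cO^d$ is the intended step. Your care about the $d\eta\otimes dt$-a.e.\ versus everywhere distinction is appropriate and resolved correctly, since the hypothesis makes the derived inequality fail pointwise on all of $\sbr{0,1}\times\cW^d$.
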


 The bicausal Monge transports which leave the Wiener measure invariant are exactly (causal) local rotations, which reflect the rotational gauge symmetry of the Wiener measure: \begin{corollary}[Bicausal Monge transports between Wiener measures]
    \label{corollary:bicausalMongemapbetweenWienermeasures}
    The set of all bicausal Monge transports from $\bbW^d$ to $\bbW^d$ is given by \[
    \cT_{bc}(\bbW^d,\bbW^d)=(\cbr{\int_0^\cdot Q_s dX_s,\quad  Q:[0,1]\times {\cal W}^d\mapsto \cO^d \text{ is    $\deln{\cH^{\bbW^d}_t}_{t \in \sbr{0,1}}$-progressively measurable }}.
    \]
\end{corollary}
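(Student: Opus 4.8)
The plan is to obtain this as a direct specialization of Theorem \ref{thm:bicausaltptmapsbetweenSDEsdegen} to the case $\sigma = \overline{\sigma} = \mathrm{Id}_d$, $b = \overline{b} = 0$, $z = \tilde{z} = 0$, so that $\eta = \nu = \mathbb{W}^d$. First I would note that in this case the canonical process $X$ is itself a Brownian motion under $\mathbb{W}^d$, so the semimartingale decomposition \eqref{eqn:canonicalsemimartdecomp} reduces to $X_t = M^{\mathbb{W}^d}_t = X_t$ with $[M^{\mathbb{W}^d}, M^{\mathbb{W}^d}]_t = \int_0^t \mathrm{Id}_d\, ds = t\,\mathrm{Id}_d$ from \eqref{eqn:qvofmartcanonical}. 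Since $\sigma(s,X) = \mathrm{Id}_d$ we have $\sigma^\dag(s,X) = \mathrm{Id}_d$ and $\Ker \sigma(s,X) = \{0\}$, so the structural constraint \eqref{eqn:Qstructureeqn} (equivalently $Q_s \Ker(\sigma(s,X)) \subseteq \Ker(\overline{\sigma}(s,T))$) is automatically satisfied for any $\mathcal{O}^d$-valued $Q$ — there is no constraint on $Q$ beyond progressive measurability. The characterization \eqref{eqn:bicausalmongemapeqn} then collapses to
\[
T_t = \int_0^t Q_s\, dX_s \qquad \mathbb{W}^d\text{-a.s.},
\]
which is precisely the asserted form, with $\del{T_t}_{t\in\sbr{0,1}}$ automatically a $\deln{\mathbb{W}^d,\del{\cH^{\mathbb{W}^d}_t}}$-semimartingale (indeed a continuous local martingale) as a stochastic integral of a bounded progressively measurable integrand against Brownian motion.

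For the converse direction I would verify that every such $T$ does land in $\cT_{bc}(\mathbb{W}^d,\mathbb{W}^d)$ by the same theorem: given any $\deln{\cH^{\mathbb{W}^d}_t}$-progressively measurable $Q$ valued in $\mathcal{O}^d$, setting $T_t := \int_0^t Q_s\, dX_s$ defines a continuous process with $[T,T]_t = \int_0^t Q_s Q_s^*\, ds = t\,\mathrm{Id}_d$ (using $Q_s \in \mathcal{O}^d$), so by Lévy's characterization $T$ is a Brownian motion under $\mathbb{W}^d$; hence $T_\#\mathbb{W}^d = \mathbb{W}^d$ and equations \eqref{eqn:bicausalmongemapeqn}, \eqref{eqn:Qstructureeqn} hold with these trivial coefficients, so Theorem \ref{thm:bicausaltptmapsbetweenSDEsdegen} gives $\pi_T \in \Pi_{bc}(\mathbb{W}^d,\mathbb{W}^d)$, i.e. $T \in \cT_{bc}(\mathbb{W}^d,\mathbb{W}^d)$. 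The inclusion $\subseteq$ is exactly the necessity direction of the theorem read off as above.

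The only genuine subtlety — and the main point worth spelling out — is the bookkeeping around filtrations and measurability: Theorem \ref{thm:bicausaltptmapsbetweenSDEsdegen} produces $Q$ that is progressively measurable with respect to the completed right-continuous filtration $\del{\cH^{\mathbb{W}^d}_t}$, and one should confirm that no information is lost or gained relative to the statement (which uses the same filtration), and that the stochastic integral $\int_0^\cdot Q_s\, dX_s$ is well-defined and $\cH^{\mathbb{W}^d}_t$-adapted — this is immediate since $Q$ is bounded (orthogonal matrices have entries bounded by $1$) hence in $L^2(d\mathbb{W}^d \otimes dt)$, and the Wiener space filtration is complete and right-continuous by construction. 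I expect this to be routine; there is essentially no hard obstacle, the corollary being a clean instance of the general theorem. I would therefore present the proof in two short paragraphs: one invoking Theorem \ref{thm:bicausaltptmapsbetweenSDEsdegen} with the trivial coefficients to get the equality of sets, and one remarking (via Lévy's characterization) that every integrand of the stated form indeed yields a bicausal Monge self-transport of $\mathbb{W}^d$.
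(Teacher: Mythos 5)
Your proposal is correct and takes essentially the same route the paper intends: the corollary is a direct specialization of Theorem \ref{thm:bicausaltptmapsbetweenSDEsdegen} with $\sigma = \overline{\sigma} = \mathrm{Id}_d$, $b = \overline{b} = 0$, under which $M^\eta = X$, $\sigma^\dag\sigma = \mathrm{Id}_d$ makes the constraint \eqref{eqn:Qstructureeqn} vacuous, and \eqref{eqn:bicausalmongemapeqn} reduces to $T = \int_0^\cdot Q_s\,dX_s$. The converse via L\'evy's characterization is likewise exactly what one needs, and the filtration bookkeeping you flag is indeed the only point that merits a sentence.
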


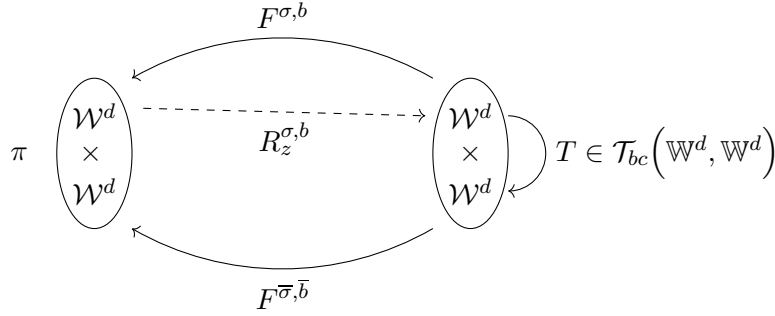
\begin{figure}
    \centering
    \begin{tikzpicture}
          \draw[->] (0,0.5) arc (60:120:4cm) node[midway,above]{$F^{\sigma,b}$};
          \draw[->,dashed] (-3.85,0.1) -- (-0.1,0) node[midway,below]{$R^{\sigma,b}_z$};
          \draw[<-] (-4,-1.5) arc (240:300:4cm) node[midway,below]{$F^{\overline{\sigma},\overline{b}}$};
          \node[] at (-4.55,-0.5) {$\times$};
          \node[] at (-4.5,0) {$\mathcal{W}^d$};
          \node[] at (-4.5,-1) {$\mathcal{W}^d$};
          \node[] at (-5.5,-0.5) {$
          \pi$};
          \draw (-4.5,-0.5) ellipse (0.5cm and 1cm);
          \node[] at (0.5,-0.5) {$\times$};
          \node[] at (0.5,0) {$\mathcal{W}^d$};
          \node[] at (0.5,-1) {$\mathcal{W}^d$};
          \draw[->] (1,0) arc (90:-90:0.5cm) node[midway,right] {$T \in \mathcal{T}_{bc}\del{\bbW^d, \bbW^d}$};
          \draw (0.5,-0.5) ellipse (0.5cm and 1cm);
      \end{tikzpicture}
    \caption{Any bi-causal Monge transport $\pi \in \mathcal{T}_{bc}\deln{\mu^{\sigma,b}_z, \mu^{\overline{\sigma}, \overline{b}}_{\tilde{z}}}$ can be constructed by pushing $\mu^{\sigma,b}_z$ along $R^{\sigma,b}_z$, then $T$ and finally, $F^{\overline{\sigma}, \overline{b}}_{\tilde{z}}$.}
    \label{fig:bicausalmongecomposition}
\end{figure}

More generally, if $\sigma$ is invertible, then $\sigma^\dag\del{s,X} = \sigma^{-1}\del{s,X}$ and \eqref{eqn:Qstructureeqn} holds. We may now describe the general structure of a bicausal Monge transport between strong solutions of SDEs as follows (see Figure \ref{fig:bicausalmongecomposition}): \begin{corollary}[Bicausal Monge transport between strong solutions of SDEs]
    \label{corollary:strongsolnsdebicausalmongemap}
   Assume the SDEs \eqref{eqn:firstsde} and \eqref{eqn:2ndsde} have strong solutions satisfying  pathwise uniqueness, given by the Ito maps $F^{\sigma,b} : \deln{\mathcal{W}^d, \cF_1} \rightarrow \deln{\mathcal{W}^d, \cF_1}$ (resp. $F^{\overline{\sigma}, \overline{b}})$. If $\sigma\del{s, \omega}$ is invertible for all $\del{s,\omega}$, then the set of all bicausal Monge transports from $\mu^{\sigma,b}_z$ to $\mu^{\overline{\sigma}, \overline{b}}_{\tilde{z}}$ is   \[
    \cbr{F^{\overline{\sigma}, \overline{b}} \circ T \circ R^{\sigma,b}_z,\qquad T\in \cT_{bc}(\bbW^d,\bbW^d)\ },
    \] where 
    $R^{\sigma,b}_z : \cW^d \rightarrow \cW^d$ is defined by $$R^{\sigma,b}_z\del{X} = \int_0^\cdot \sigma^{-1}\del{s,X} dM^\eta_s \qquad \eta\-{\rm a.s.\  and}\quad M^\eta_\cdot := X_\cdot - z - \int_0^\cdot b(s,X) ds.$$
\end{corollary}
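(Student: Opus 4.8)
The plan is to deduce Corollary \ref{corollary:strongsolnsdebicausalmongemap} from the characterization in Theorem \ref{thm:bicausaltptmapsbetweenSDEsdegen} together with the special case Corollary \ref{corollary:bicausalMongemapbetweenWienermeasures}, using the Ito maps $F^{\sigma,b}$, $F^{\overline\sigma,\overline b}$ to transfer the problem to the Wiener space. Under the invertibility hypothesis on $\sigma$, the pseudoinverse $\sigma^\dag(s,\omega)=\sigma^{-1}(s,\omega)$ and $\sigma^\dag(s,X)\sigma(s,X)=\mathrm{Id}_d$, so \eqref{eqn:Qstructureeqn} is automatic and the content of Theorem \ref{thm:bicausaltptmapsbetweenSDEsdegen} is exactly: $T$ is a bicausal Monge map from $\eta=\mu^{\sigma,b}_z$ to $\nu=\mu^{\overline\sigma,\overline b}_{\tilde z}$ iff there is a progressively measurable $\cO^d$-valued $Q$ with
\[
T_t = \tilde z + \int_0^t \overline b(s,T)\,ds + \int_0^t \overline\sigma(s,T)\,Q_s\,\sigma^{-1}(s,X)\,dM^\eta_s, \qquad \eta\text{-a.s.}
\]
So the work is to show that the solution map of this SDE coincides with the composition $F^{\overline\sigma,\overline b}\circ T_Q\circ R^{\sigma,b}_z$, where $T_Q = \int_0^\cdot Q_s\,dX_s \in \cT_{bc}(\bbW^d,\bbW^d)$ ranges over all bicausal Monge self-transports of $\bbW^d$ as in Corollary \ref{corollary:bicausalMongemapbetweenWienermeasures}.

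First I would verify that $R^{\sigma,b}_z$ is well defined and is the inverse Ito map: since $X$ under $\eta$ has semimartingale decomposition \eqref{eqn:canonicalsemimartdecomp}--\eqref{eqn:qvofmartcanonical}, the process $M^\eta_\cdot = X_\cdot - z - \int_0^\cdot b(s,X)\,ds$ is a continuous $(\eta,(\cH_t))$-local martingale with $[M^\eta,M^\eta]_t=\int_0^t \sigma\sigma^*(s,X)\,ds$, hence $W:=R^{\sigma,b}_z(X)=\int_0^\cdot \sigma^{-1}(s,X)\,dM^\eta_s$ is a continuous local martingale with $[W,W]_t=\int_0^t \sigma^{-1}\sigma\sigma^*(\sigma^{-1})^*(s,X)\,ds = t\,\mathrm{Id}_d$, so by Lévy's characterization $W$ is a $(\eta,(\cH_t^\eta))$-Brownian motion; thus $(R^{\sigma,b}_z)_\#\eta=\bbW^d$, and by pathwise uniqueness of \eqref{eqn:firstsde} one has $F^{\sigma,b}\circ R^{\sigma,b}_z = X_\cdot$ $\eta$-a.s. and $R^{\sigma,b}_z\circ F^{\sigma,b}=X_\cdot$ $\bbW^d$-a.s. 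Next I would take an arbitrary $Q$ as in Theorem \ref{thm:bicausaltptmapsbetweenSDEsdegen}, note that (reading $Q$ along $R^{\sigma,b}_z$) it induces a progressively measurable $\cO^d$-valued process on $(\cW^d,\bbW^d)$, and set $\tilde W := \int_0^\cdot Q_s\,dW_s$, a $(\eta,(\cH_t^\eta))$-Brownian motion again by Lévy. Then the stochastic-integral term in the SDE for $T$ is $\int_0^t \overline\sigma(s,T)\,d\tilde W_s$, so $T$ solves \eqref{eqn:2ndsde} driven by $\tilde W$; by pathwise uniqueness for \eqref{eqn:2ndsde}, $T = F^{\overline\sigma,\overline b}(\tilde W) = F^{\overline\sigma,\overline b}\big(\int_0^\cdot Q_s\,dW_s\big) = \big(F^{\overline\sigma,\overline b}\circ T_Q\circ R^{\sigma,b}_z\big)(X)$ $\eta$-a.s. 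Conversely, given any $T_Q\in\cT_{bc}(\bbW^d,\bbW^d)$, i.e. $T_Q=\int_0^\cdot Q_s\,dX_s$ with $Q$ progressively measurable and $\cO^d$-valued, the composition $F^{\overline\sigma,\overline b}\circ T_Q\circ R^{\sigma,b}_z$ produces, by the same chain of identities run in reverse, a process $T$ satisfying the SDE \eqref{eqn:bicausalmongemapeqn}, which by Theorem \ref{thm:bicausaltptmapsbetweenSDEsdegen} is a bicausal Monge map from $\eta$ to $\nu$; this gives the two inclusions and hence the claimed equality of sets.

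I would also record the measurability bookkeeping: $T_Q$ is causal/non-anticipative as a map $\cW^d\to\cW^d$ because the stochastic integral $\int_0^\cdot Q_s\,dX_s$ is adapted, and likewise $R^{\sigma,b}_z$ and $F^{\overline\sigma,\overline b}$ are non-anticipative, so the composition is a genuine Monge map in the sense of the earlier definitions, and the process $Q$ appearing in \eqref{eqn:bicausalmongemapeqn} is exactly the process defining $T_Q$ transported to $(\cW^d,\eta)$ via $R^{\sigma,b}_z$; the identification $\big(\cH_t^\eta\big)$-progressive measurability on the two sides matches because $R^{\sigma,b}_z$ pushes $(\cH_t^\eta)$ onto $(\cH_t^{\bbW^d})$ up to completion, using that $W$ generates the same completed right-continuous filtration as $X$ under $\eta$ (here invertibility of $\sigma$ is again what guarantees $\sigma(\cH_t^{W,\eta})=\cH_t^\eta$, cf. the $H$-hypothesis remarks and \cite[Theorem $5.19$]{liptser1977statistics}-type arguments).

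The main obstacle I anticipate is the filtration identification, specifically showing that "$Q$ progressively measurable w.r.t.\ $(\cH_t^\eta)$ on $(\cW^d,\eta)$" corresponds bijectively, under transport by $R^{\sigma,b}_z$, to "$Q$ progressively measurable w.r.t.\ $(\cH_t^{\bbW^d})$ on $(\cW^d,\bbW^d)$". One direction (from $X$-side to $W$-side) needs that $W=R^{\sigma,b}_z(X)$ generates, after completion and taking right-continuous versions, a filtration containing $(\cH_t^\eta)$ — which fails without invertibility of $\sigma$ but holds here since $M^\eta = \int_0^\cdot \sigma(s,X)\,dW_s$ recovers $X$ from $W$ — and the other direction is immediate since $W$ is $(\cH_t^\eta)$-adapted. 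Once this equivalence is in hand, the rest is an application of Lévy's characterization theorem and pathwise uniqueness, both routine, and the corollary follows by combining Theorem \ref{thm:bicausaltptmapsbetweenSDEsdegen} with Corollary \ref{corollary:bicausalMongemapbetweenWienermeasures}.
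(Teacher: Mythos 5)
Your proposal is correct and follows essentially the same route as the paper's proof: reduce to Theorem \ref{thm:bicausaltptmapsbetweenSDEsdegen} (with \eqref{eqn:Qstructureeqn} trivially satisfied since $\sigma^\dag = \sigma^{-1}$), use L\'evy's characterization to show $R^{\sigma,b}_z$ is a Brownian motion, use pathwise uniqueness to identify $\tilde{\cF}^\eta_t = \cH^\eta_t$ and thereby transfer the orthogonal integrand $Q$ to Wiener space, and then write $\hat T = F^{\overline\sigma,\overline b}\circ T_Q\circ R^{\sigma,b}_z$ via Corollary \ref{corollary:bicausalMongemapbetweenWienermeasures}. The filtration identification you flagged as the main obstacle is exactly the step the paper singles out and handles in the same way, so no gap remains.
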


\begin{proof}
    Let $\eta := \mu^{\sigma,b}_z$ and $\deln{\tilde{\cF}_t}_{t \in \sbr{0,1}}$ be the right-continuous version of the filtration generated by $R^{\sigma,b}_z = \deln{\deln{R^{\sigma,b}_z}_t}_{t \in \sbr{0,1}}$, where $\deln{R^{\sigma,b}_z}_t := \int_0^t \sigma^{-1}\del{s,X} dM^\eta_s$. We claim that $\deln{\tilde{\cF}^\eta_t}_{t \in \sbr{0,1}} = \del{\cH^\eta_t}_{t \in \sbr{0,1}}$. It is clear that $\tilde{\cF}^\eta_t \subseteq \cH^\eta_t$ for all $t \in \sbr{0,1}$. To see the converse, note that $\deln{\deln{R^{\sigma,b}_z}_t}_{t \in \sbr{0,1}}$ is a $\deln{\tilde{\cF}^\eta_t}_{t \in \sbr{0,1}}$-Brownian motion by L\'evy's characterization of Brownian motion. Furthermore, by definition of $\deln{\deln{R^{\sigma,b}_z}_t}_{t \in \sbr{0,1}}$ and \eqref{eqn:canonicalsemimartdecomp}, we have \[
    X_t = z + \int_0^t b\del{s,X} ds + \int_0^t \sigma\del{s,X} d\deln{R^{\sigma,b}_z}_s, \; t \in \sbr{0,1} \; \eta\text{-a.s.} . 
    \] By pathwise uniqueness, $X$ must be adapted to the filtration generated by Brownian motion $R^{\sigma,b}_z$ so that $\cH^\eta_t \subseteq \tilde{\cF}^\eta_t$ for all $t \in \sbr{0,1}$. 
    
    Let $\hat{T}$ be a bicausal Monge map from $\mu^{\sigma,b}_z$ to $\mu^{\overline{\sigma}, \overline{b}}_{\tilde{z}}$. Then, by Theorem \ref{thm:bicausaltptmapsbetweenSDEsdegen}, there exists some $\deln{\cH_t^{\eta}}_{t \in \sbr{0,1}}$-progressively measurable $\cO^d$-valued process $\del{Q_t}_{t \in \sbr{0,1}}$ such that \eqref{eqn:bicausalmongemapeqn} and \eqref{eqn:Qstructureeqn} holds and by pathwise uniqueness, it is clear that $\hat{T} = F^{\overline{\sigma}, \overline{b}}\deln{\int_0^t Q_s d{\deln{R^{\sigma,b}_z}}_s}$. But since $\deln{\tilde{\cF}^\eta_t}_{t \in \sbr{0,1}} = \deln{\cH^\eta_t}_{t \in \sbr{0,1}}$, then there exists a $\deln{\cH_t^{\bbW^d}}_{t \in \sbr{0,1}}$-progressively measurable $\cO^d$-valued process process $\deln{\tilde{Q}_t}$ such that $\tilde{Q}_t \circ R = Q_t$ $\eta$-a.s. for all $t \in \sbr{0,1}$. The map $X \mapsto T\del{X} := \int_0^\cdot \tilde{Q}_s dX_s$ is a bicausal Monge map from $\bbW^d$ to $\bbW^d$ by Corollary \ref{corollary:bicausalMongemapbetweenWienermeasures} and it can be readily verified that $\eta$-a.s. $T\deln{R^{\sigma,b}_z} = \int_0^\cdot Q_s d\deln{R^{\sigma,b}_z}_s$. Hence, $\hat{T} = F^{\overline{\sigma}, \overline{b}}\deln{T\deln{R^{\sigma,b}_z}}$ as required.

    Finally, since \eqref{eqn:Qstructureeqn} is always satisfied by assumption on $\sigma$, it is clear to see that for any $T \in \cT_{bc}\deln{\bbW^d, \bbW^d}$, the map $ F^{\overline{\sigma}, \overline{b}}\deln{T\deln{R^{\sigma,b}_z}}$ is a bicausal Monge map from $\mu^{\sigma,b}_z$ to $\mu^{\overline{\sigma}, \overline{b}}_{\tilde{z}}$ using Theorem \ref{thm:bicausaltptmapsbetweenSDEsdegen}. 
\end{proof}
Corollary \ref{corollary:strongsolnsdebicausalmongemap} is the case where $\Ker\del{\sigma\del{s,X}} = \cbrn{0} \subseteq \Ker\del{\overline{\sigma}\del{s,T}}$ so that \eqref{eqn:Qstructureeqn} is \emph{always} satisfied, while Corollary \ref{corollary:nobicausalmongemaps} is the case where \eqref{eqn:Qstructureeqn} is \emph{never} satisfied. One can construct intermediate situations via conditions on the diffusion coefficient $\overline{\sigma}$. For example, \begin{corollary}
    \label{corollary:existenceofbicausalmongemapssimpleeg}
    Let $\sigma, \overline{\sigma} \in \mathscr{A}^{d, d}$ be  such that for all $t \in \sbr{0,1}$, $\omega, \omega' \in \cW^d$, \[
    \envert{\overline{\sigma}\del{t, \omega} - \overline{\sigma}\del{t, \omega'}} \leq L \norm{\omega - \omega'}_\infty \text{ and } \int_0^1 \envert{\overline{\sigma}\del{s,0}}^2 ds < \infty.\]
    for some   $L > 0$. If $\Ker \del{\overline{\sigma}}$ is  deterministic   and for all $s \in \sbr{0,1}$, \[
    \max_{\omega \in E} \text{dim} \del{\Ker\del{\sigma\del{s,\omega}}} \leq \text{dim}\del{\Ker\del{\overline{\sigma}}\del{s}},
    \] there exists a bicausal Monge transport from $\mu^{\sigma,0}_z$ to $\mu^{\overline{\sigma},0}_z$.
\end{corollary}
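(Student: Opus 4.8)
The strategy is to produce the bicausal Monge map explicitly as the composition described in Corollary~\ref{corollary:strongsolnsdebicausalmongemap}'s spirit, but now with a genuinely nontrivial kernel in $\sigma$, and to verify the two conditions \eqref{eqn:bicausalmongemapeqn}--\eqref{eqn:Qstructureeqn} of Theorem~\ref{thm:bicausaltptmapsbetweenSDEsdegen}. First I would record what the Lipschitz and growth hypotheses on $\overline{\sigma}$ buy us: by standard SDE theory the equation $d\tilde Z_t = \overline{\sigma}(t,\tilde Z)\,d\tilde B_t$, $\tilde Z_0 = z$, has a strong, pathwise unique solution, so $\mu^{\overline{\sigma},0}_z$ is a well-defined law to which Assumption~\ref{assumption:weakuniquenessassumption} applies; the same classical results (with weaker hypotheses) are only needed in the form of weak uniqueness for $\mu^{\sigma,0}_z$, which we assume. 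The point of the Lipschitz assumption is really that it makes $F^{\overline\sigma,0}$ a well-defined causal Ito map on $\cW^d$.

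\textbf{Construction of $Q$.} The heart of the matter is the dimension inequality
\[
\max_{\omega \in \cW^d} \dim\bigl(\Ker(\sigma(s,\omega))\bigr) \le \dim\bigl(\Ker(\overline{\sigma})(s)\bigr),
\]
together with the hypothesis that $\Ker(\overline\sigma)(s) =: K(s)$ is \emph{deterministic}. Fix $s$ and a path $\omega$. Let $k(s,\omega) := \dim\Ker(\sigma(s,\omega)) \le \dim K(s)$. I would choose $Q_s(\omega) \in \cO^d$ so that $Q_s(\omega)$ maps $\Ker(\sigma(s,\omega))$ \emph{into} $K(s)$; this is possible precisely because of the dimension inequality — any linear isometry from the lower-dimensional subspace into the higher-dimensional one extends to an orthogonal matrix. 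The remaining freedom (the action on the orthogonal complement) should be fixed measurably: e.g.\ one takes orthonormal bases of $\Ker(\sigma(s,\omega))^\perp$ and of $K(s)^\perp$ produced by Gram--Schmidt applied to fixed reference vectors after projecting, obtaining a jointly measurable (indeed $(\cH^\eta_s)$-progressive, since $\sigma \in \mathscr A^{d,d}$ is progressive and the pseudoinverse/projection $\sigma^\dag\sigma$ inherits this) selection $s,\omega \mapsto Q_s(\omega)$. With this choice $Q_s\Ker(\sigma(s,X)) \subseteq K(s) = \Ker(\overline\sigma)(s) \subseteq \Ker(\overline\sigma(s,T))$ automatically — wait: one must be careful, $\overline\sigma(s,T)$ has kernel exactly $K(s)$ since $K$ is deterministic and independent of the path argument, so $\Ker(\overline\sigma(s,T)) = K(s)$ and the inclusion $Q_s\Ker(\sigma(s,X)) \subseteq \Ker(\overline\sigma(s,T))$ is exactly condition \eqref{eqn:Qstructureeqn} in its projection form $Q_s\Ker(\sigma(s,X)) \subseteq \Ker(\overline\sigma(s,T))$.

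\textbf{Construction of $T$ and verification.} Having $Q$, set $N_t := \int_0^t Q_s\,\sigma^\dag(s,X)\,dM^\eta_s$ where $M^\eta_\cdot = X_\cdot - z$ (since $b \equiv 0$ here), which is a continuous $(\eta,(\cH^\eta_t))$-local martingale; by \eqref{eqn:qvofmartcanonical} its quadratic variation is $\int_0^t Q_s \sigma^\dag\sigma\sigma^*(\sigma^\dag)^*Q_s^* \,ds = \int_0^t Q_s P_s Q_s^*\,ds$ with $P_s = \sigma^\dag(s,X)\sigma(s,X)$ the projection onto $\Ker(\sigma(s,X))^\perp$. Then I would \emph{define} $T := F^{\overline\sigma,0}(N)$, the Ito map of the second SDE applied to the (degenerate) driving martingale $N$; because $F^{\overline\sigma,0}$ only ever integrates $\overline\sigma(s,\cdot)$ against increments of $N$, and $\overline\sigma(s,T)Q_s = \overline\sigma(s,T)Q_sP_s$ by \eqref{eqn:Qstructureeqn}, the process $T$ solves $T_t = z + \int_0^t \overline\sigma(s,T)Q_s\sigma^\dag(s,X)\,dM^\eta_s$, which is exactly \eqref{eqn:bicausalmongemapeqn} with $\overline b = 0$, $\tilde z = z$. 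The only genuinely delicate points are: (i) justifying that $F^{\overline\sigma,0}$ can be applied to the non-Brownian, rank-degenerate semimartingale $N$ — this follows because $N$ is a continuous local martingale with absolutely continuous bracket bounded by $ds$, so the usual Picard iteration under the Lipschitz bound on $\overline\sigma$ converges and gives a pathwise unique strong solution adapted to $(\cH^\eta_t)$; and (ii) checking that $T$ indeed pushes $\eta$ to $\nu = \mu^{\overline\sigma,0}_z$, i.e.\ that $T$ is a \emph{weak} solution of \eqref{eqn:2ndsde} — for this one enlarges the probability space by an independent Brownian motion $W'$ and sets $\tilde B := \int_0^\cdot Q_s\sigma^\dag(s,X)\,dM^\eta_s + \int_0^\cdot (\mathrm{Id}_d - Q_sP_sQ_s^*)^{1/2}\,dW'_s$, which by L\'evy's characterization is a $d$-dimensional Brownian motion (its bracket is $\int_0^\cdot(Q_sP_sQ_s^* + \mathrm{Id}_d - Q_sP_sQ_s^*)\,ds = t\,\mathrm{Id}_d$), and then $T_t = z + \int_0^t \overline\sigma(s,T)\,d\tilde B_s$ because $\overline\sigma(s,T)(\mathrm{Id}_d - Q_sP_sQ_s^*) = 0$ — indeed $\mathrm{Id}_d - Q_sP_sQ_s^* = Q_s(\mathrm{Id}_d - P_s)Q_s^*$ projects onto $Q_s\Ker(\sigma(s,X)) \subseteq \Ker(\overline\sigma(s,T))$. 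By weak uniqueness (Assumption~\ref{assumption:weakuniquenessassumption}) the law of $T$ under $\eta$ is $\nu$, so $T_\#\eta = \nu$, and Theorem~\ref{thm:bicausaltptmapsbetweenSDEsdegen} (sufficiency direction) gives that $(\mathrm{Id},T)_\#\eta$ is a bicausal coupling, i.e.\ $T$ is the desired bicausal Monge transport. The main obstacle I anticipate is the measurable/progressive selection of $Q_s(\omega)$ with the prescribed kernel-mapping property when $k(s,\omega)$ jumps — one handles this by partitioning $[0,1]\times\cW^d$ according to the value of $\mathrm{rank}\,\sigma(s,\omega)$ (a progressively measurable integer-valued function) and performing a Gram--Schmidt construction on each piece.
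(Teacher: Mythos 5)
Your proof is correct and follows essentially the same strategy as the paper: use the dimension inequality and the deterministic kernel of $\overline\sigma$ to construct a progressively measurable $\cO^d$-valued $Q$ with $Q_s\Ker(\sigma(s,X)) \subseteq \Ker(\overline\sigma(s))$, so that \eqref{eqn:Qstructureeqn} holds, then solve \eqref{eqn:bicausalmongemapeqn} via Picard iteration and invoke the sufficiency direction of Theorem~\ref{thm:bicausaltptmapsbetweenSDEsdegen}. The only real difference is in the concrete construction of $Q$: the paper builds it explicitly from the singular value decompositions of $\sigma$ and $\overline\sigma$, whereas you use an abstract isometric-embedding/Gram--Schmidt argument with a partition on $\mathrm{rank}\,\sigma(s,\omega)$ to handle measurability; both are valid executions of the same idea, and your version is somewhat more transparent about the measurable-selection issue and about why applying the It\^o map $F^{\overline\sigma,0}$ to the degenerate martingale $N$ is legitimate.
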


\begin{proof}
   We  construct an $\cO^d$-valued process $\del{Q_t}_{t \in \sbr{0,1}}$ such that for all $s \in \sbr{0,1}, \omega \in \cW^d$, \[
   Q_s \text{Ker}\del{\sigma\del{s,\omega}} \subseteq \text{Ker}\del{\overline{\sigma}\del{s}}\]
   so that \eqref{eqn:Qstructureeqn} holds. Let $\overline{m}\del{s} := \text{dim}\deln{\text{Ker}\del{\overline{\sigma}\del{s}}}$ and $m\del{s,\omega} := \text{dim}\deln{\text{Ker}\del{\sigma\del{s,\omega}}}$ so that $m \leq \overline{m}$. Letting $\overline{\sigma} = \overline{U} \overline{\Sigma} \overline{V}^*$ be the singular value decomposition (SVD) of $\overline{\sigma}$, we see that for fixed $s \in \sbr{0,1}$, $d - \overline{m} = \text{max}\cbr{k : \overline{\Sigma}_{kk} > 0}$ (without loss of generality, the diagonal entries of $\overline{\Sigma}$ are assumed to be decreasing). Hence, writing $\overline{v}^*_i$ as the $i^{th}$-column of $\overline{V}^*$ and $\overline{\alpha} := d - \overline{m}$, then $\text{Ker}\del{\overline{\sigma}\del{s,\omega}} = \text{Span}\deln{\cbr{\overline{v}_i^*}_{i = \overline{\alpha} + 1}^d}$. Since $\text{Ker}\del{\overline{\sigma}\del{s,\omega}}$ is deterministic, up to replacing the last $\overline{m}$ columns of $\overline{V}^*$, we assume that $\cbr{\overline{v}^*_i}_{i = \overline{\alpha} + 1}^d$ are vectors depending on time, but not on $\omega$. Hence, let $\overline{O}^*_s$ be any time-dependent orthogonal matrix whose last $\overline{m}$ columns are $\cbr{\overline{v}^*_i}_{i = \overline{\alpha} + 1}^d$. Letting $\sigma = U \Sigma V^*$ be the SVD of $\sigma$, one easily verifies that $Q\del{s,\omega} := \overline{O}^*_s V\del{s,\omega}$ is the desired $Q$ noting that the kernel of $\sigma\del{s,\omega}$ is given by the last $m$ vectors of $V_s^*$. 
Picard iteration   applied to \eqref{eqn:bicausalmongemapeqn}  yields the existence of a bicausal Monge transport.
\end{proof}

\subsection{Proof of Theorem \ref{thm:bicausaltptplanbetweenSDEs} and \ref{thm:bicausaltptmapsbetweenSDEsdegen}}
\label{subsection:proofs}

\subsubsection{Martingale Representation Theorem}

A crucial ingredient in the proof of our results is a  martingale representation property   \cite[Theorem $2$]{ustunel2019martingale} applicable to   SDEs with unique weak solutions (see also \cite[Ch. 7]{bally2016stochastic}).

\begin{prop}
        \label{prop:degenMRPoncanonicalspace}
    On $\del{\cW^d, \cF_1, \del{\cH^\eta_t}_{t \in \sbr{0,1}}, \eta}$, where $\eta := \mu^{\sigma,b}_z$, we have that for any $F \in L^2\del{\eta, \cF_1}$, there exists a $\del{\cH^\eta_{t}}_{t \in \sbr{0,1}}$-progressively measurable process $\xi : \sbr{0,1} \times \cW^d \rightarrow \mathbb{R}^d$ such that \[
    \bbE^\eta\del{\int_0^1 \norm{\xi_s}^2_{\mathbb{R}^d} ds} < \infty
    \] and \[
    F = \int_0^1 \del{\xi_s, dL_s}_{\bbR^d} = \int_0^1 \sum_{i = 1}^d \xi^i_s dL^i_s \quad \eta\text{-a.s.},
    \] where $L_\cdot = \int_0^\cdot \sigma^\dag\del{s,X} dM^\eta_s$. In particular, any square-integrable $\deln{\eta,\del{\cH_{t}^\eta}_{t \in \sbr{0,1}}}$-martingale $\del{M_t}_{t \in \sbr{0,1}}$ has representation \[
    M_t = \int_0^t \sum_{i = 1}^d \xi^i_s dL^i_s, \quad \forall t \in \sbr{0,1} \quad \eta\text{-a.s.}. 
    \]
\end{prop}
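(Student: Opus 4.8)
The plan is to transfer the classical martingale representation property for Brownian motion to the canonical setting via the structure of weak solutions. First I would observe that, as recorded in \eqref{eqn:canonicalsemimartdecomp}--\eqref{eqn:qvofmartcanonical}, on $\del{\cW^d, \cF_1, \del{\cH^\eta_t}, \eta}$ the canonical process decomposes as $X_t = z + \int_0^t b(s,X)\,ds + M^\eta_t$ with $\sbr{M^\eta}_t = \int_0^t \sigma\sigma^*(s,X)\,ds$. Then I would argue that $L_\cdot := \int_0^\cdot \sigma^\dag(s,X)\,dM^\eta_s$ is well-defined as a continuous $\deln{\eta,\del{\cH^\eta_t}}$-local martingale whose quadratic covariation is $\sbr{L^i, L^j}_t = \int_0^t \del{\sigma^\dag \sigma \sigma^* (\sigma^\dag)^*}_{ij}(s,X)\,ds = \int_0^t P_s^{ij}\,ds$, where $P_s = \sigma^\dag\sigma(s,X)$ is the orthogonal projection onto $(\Ker\sigma(s,X))^\perp$; in particular $\sbr{L^i,L^j}_t \le \delta_{ij}\, t$ so $L$ is a genuine $L^2$-martingale on $[0,1]$. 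The key point is that $L$ is \emph{not} in general a Brownian motion (its quadratic variation is a projection, not the identity), which is why one cannot merely cite the Brownian MRT directly and why the result of \"Ust\"unel \cite{ustunel2019martingale} is needed.

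The core step is to invoke \cite[Theorem $2$]{ustunel2019martingale} (equivalently the weak-solution martingale representation of \cite[Ch. 7]{bally2016stochastic}): because $\eta = \mu^{\sigma,b}_z$ is the \emph{unique} weak solution of \eqref{eqn:firstsde}, the filtration $\del{\cH^\eta_t}$ enjoys the predictable representation property with respect to the driving noise. Concretely, on any probability space carrying a weak solution $(Z,B)$ one recovers $B_t = \int_0^t \sigma^\dag(s,Z)\,dM_s + \int_0^t (\Id_d - P_s)\,dB_s$ where $M$ is the martingale part of $Z$; the uniqueness in law forces every $\deln{\eta,\del{\cH^\eta_t}}$-martingale to be representable as a stochastic integral against the ``observable'' component $L = \int_0^\cdot \sigma^\dag(s,X)\,dM^\eta_s$, since the orthogonal complement $\int (\Id - P)\,dB$ carries no information visible to $\del{\cH^\eta_t}$. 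I would therefore: (i) fix $F \in L^2(\eta,\cF_1)$ and set $M_t := \bbE^\eta(F \mid \cH^\eta_t)$, a square-integrable $\deln{\eta,\del{\cH^\eta_t}}$-martingale with a c\`adl\`ag (indeed continuous, by the MRT) modification; (ii) apply the cited representation theorem to obtain a $\del{\cH^\eta_t}$-progressively measurable $\xi : \sbr{0,1}\times\cW^d \to \bbR^d$ with $M_t = \int_0^t (\xi_s, dL_s)_{\bbR^d}$; (iii) take $t=1$ and use $M_1 = \bbE^\eta(F\mid\cH^\eta_1) = F$ $\eta$-a.s., giving the stated representation of $F$; (iv) deduce the integrability bound $\bbE^\eta\int_0^1 \norm{\xi_s}^2\,ds < \infty$ from the It\^o isometry $\bbE^\eta\envert{M_1}^2 = \bbE^\eta\int_0^1 (\xi_s, P_s \xi_s)_{\bbR^d}\,ds$ — note one only controls $\bbE^\eta\int_0^1 \norm{P_s\xi_s}^2\,ds$ directly, so I would choose the version of $\xi$ that lies in the range of $P_s$ (replace $\xi_s$ by $P_s\xi_s$, which changes neither the integral $\int(\xi_s,dL_s)$ since $dL_s = P_s\,dL_s$ in the appropriate sense, nor the conclusion), after which $\norm{P_s\xi_s} = \norm{\xi_s}$ and the bound follows.

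The last sentence of the proposition — representation of an arbitrary square-integrable $\deln{\eta,\del{\cH^\eta_t}}$-martingale $(M_t)$ — is then immediate: apply the already-proved statement to $F := M_1 \in L^2(\eta,\cF_1)$ to get $\xi$ with $M_1 = \int_0^1 (\xi_s,dL_s)_{\bbR^d}$, and then for each $t$ condition on $\cH^\eta_t$: since $N_t := \int_0^t (\xi_s,dL_s)_{\bbR^d}$ is a martingale with $N_1 = M_1$ $\eta$-a.s., we get $M_t = \bbE^\eta(M_1\mid\cH^\eta_t) = \bbE^\eta(N_1\mid\cH^\eta_t) = N_t$ $\eta$-a.s., and both sides have continuous modifications so they agree as processes.

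I expect the main obstacle to be the careful handling of the degeneracy of $\sigma$: the process $L$ has quadratic variation governed by the projection $P_s = \sigma^\dag\sigma(s,X)$ rather than the identity, so the It\^o isometry only sees the component $P_s\xi_s$, and one must argue both that the representation can be taken with $\xi_s \in \mathrm{Range}(P_s)$ and that this suffices to control $\norm{\xi_s}$. A secondary subtlety is justifying that the predictable representation property genuinely holds for $\del{\cH^\eta_t}$ under only weak uniqueness (not pathwise uniqueness or strong existence) — this is exactly the content of \cite[Theorem $2$]{ustunel2019martingale}, and I would cite it rather than reprove it, but I would spell out precisely which hypotheses of that theorem are met here (right-continuity and $\eta$-completeness of $\del{\cH^\eta_t}$, the semimartingale decomposition \eqref{eqn:canonicalsemimartdecomp}, and uniqueness in law from Assumption \ref{assumption:weakuniquenessassumption}).
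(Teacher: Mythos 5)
Your proposal follows the same strategy as the paper: both hinge on \"Ust\"unel's martingale representation theorem, the process $L = \int_0^\cdot \sigma^\dag(s,X)\,dM^\eta_s$, and careful handling of the degeneracy through the projection $P_s = \sigma^\dag(s,X)\sigma(s,X)$. The one genuine gap is that you gesture at, but do not actually perform, the construction that makes \"Ust\"unel's theorem applicable. The identity you record,
\[
B_t = \int_0^t \sigma^\dag(s,Z)\,dM_s + \int_0^t \bigl(\text{Id}_d - P_s\bigr)\,dB_s ,
\]
holds for any weak solution pair $(Z,B)$, but it presupposes a full $d$-dimensional Brownian motion $B$ on the right-hand side; it is a decomposition, not a construction. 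The canonical space $(\mathcal{W}^d, \mathcal{F}_1, \eta)$ with its canonical filtration does not, by itself, carry such a $B$ when $\sigma$ is degenerate, so the statement ``the filtration $(\mathcal{H}^\eta_t)$ enjoys the predictable representation property with respect to the driving noise'' has no precise referent for ``the driving noise.'' The paper's proof therefore first passes to the product space $\bigl(\mathcal{W}^d \times \mathcal{W}^d,\ \mathcal{F}_1\otimes\mathcal{F}_1,\ ((\mathcal{H}_t\otimes\mathcal{H}_t)^{\eta\otimes\mathbb{W}^d})_{t+},\ \eta\otimes\mathbb{W}^d\bigr)$, verifies that the product coupling is bicausal, and builds the driving noise by
\[
B_t := \int_0^t \sigma^\dag(s,X)\,dM_s + \int_0^t \bigl(\text{Id}_d - P_s\bigr)\,dY_s
\]
using the auxiliary independent Brownian $Y$ from the second factor (following the Ikeda--Watanabe weak-existence scheme). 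Only on this enlarged basis is \"Ust\"unel's Theorem~2 invoked, and the result is transferred back to $\mathcal{W}^d$ through the identity $\sigma^\dag\sigma\,dB = \sigma^\dag\,dM$, together with the identification $L^2(\eta,\mathcal{F}_1)\cong L^2\bigl(\eta\otimes\mathbb{W}^d,\ \mathcal{F}_1\otimes\{\emptyset,\mathcal{W}^d\}\bigr)$. Spelling this out would close the gap. On the positive side, your remark about the integrability bound — that the It\^o isometry against $dL$ only controls $\mathbb{E}^\eta\int_0^1 \|P_s\xi_s\|^2\,ds$, so one should replace $\xi_s$ by $P_s\xi_s$ to get the stated $\mathbb{E}^\eta\int_0^1\|\xi_s\|^2\,ds<\infty$ — is a valid and careful point that the paper's own proof leaves implicit, simply attributing the $L^2$ bound to \"Ust\"unel's theorem.
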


\begin{proof}
Consider the filtered probability space \[
\del{\Omega, \cG, \del{\cG_t}_{t \in \sbr{0,1}}, \bbP} := \del{\cW^d \times \cW^d, \cF_1 \otimes \cF_1, \deln{\deln{\cH_t \otimes \cH_t}^{\eta \otimes \bbW^d}}_{t + \in \sbr{0,1}}, \eta \otimes \bbW^d}.
\] Since $X_{\#}\bbP = \eta$, we can identify $L^2\del{\eta,\cF_1}$ with $L^2\deln{\bbP, \cF_1 \otimes \cbrn{\emptyset, \cW^d}}$ by the Doob-Dynkin lemma (see \cite{kallenberg1997foundations}) and we no longer distinguish these spaces. It is straightforward to verify that $\bbP \in \Pi_{bc}\deln{\eta, \bbW^d}$. \eqref{eqn:canonicalsemimartdecomp} and Theorem \ref{thm:equivctdscausality} implies that $\del{X_t}_{t \in \sbr{0,1}}$ is a $\deln{\bbP,\del{\cG_t}}$-semimartingale such that $\del{X_t - z - \int_0^t b\del{s,X} ds}_{t \in \sbr{0,1}}$ is a $\deln{\bbP,\del{\cG_t}}$-local martingale and \[
\sbr{X,X}_t = \int_0^t \sigma\del{s,X} \sigma^*\del{s,X} ds \quad \bbP\text{-a.s. } t \in \sbr{0,1}
\] and $\del{Y_t}_{t \in \sbr{0,1}}$ is a $\deln{\bbP,\del{\cG_t}}$-Brownian motion independent from $\del{X_t}_{t \in \sbr{0,1}}$. Following the proof of \cite[Theorem $7.1'$]{ikeda2014stochastic}, we define \begin{equation}
    \label{eqn:constructedbrownian}
    B_t := \int_0^t \sigma^\dag\del{s,X} dM_s + \int_0^t \deln{\text{Id}_d - \sigma^\dag\del{s,X}\sigma\del{s,X}} dY_s, 
\end{equation} where $M_t := X_t - z - \int_0^t b\del{s,X}ds$. Then, $\del{B_t}_{t \in \sbr{0,1}}$ is a $\deln{\bbP,\del{\cG_t}}$-Brownian motion by L\'evy's characterization of Brownian motion and $M_t = \int_0^t \sigma\del{s,X} dB_s$ for all $t \in \sbr{0,1}$ $\bbP$-a.s.. That is, the pair $\del{X,B}$ satisfies SDE \eqref{eqn:firstsde}. Since $\sigma^\dag\del{s,X} \sigma\del{s,X}$ is an orthogonal projection onto the range of $\sigma^*\del{s,X}$ (see \cite[Section $5.5.4$]{golub2013matrix}), we have by \cite[Theorem $2$]{ustunel2019martingale} that for any $F \in L^2\del{\eta, \cF_1}$, there exists a $\deln{\cH^\eta_t \otimes \cbrn{\emptyset, \cW^d}}_{t \in \sbr{0,1}}
$-progressively measurable process $\del{\xi_t}$ such that \[
\bbE^\bbP \del{\int_0^1 \norm{\xi_s}^2_{\bbR^d} ds } < \infty 
\] and \[
F = \int_0^1 \del{\xi_s, \sigma^\dag\del{s,X} \sigma\del{s,X} dB_s}_{\bbR^d} \; \bbP\text{-a.s..}
\] Using \eqref{eqn:constructedbrownian}, one can verify $\sigma^\dag\del{s,X} \sigma\del{s,X} dB_s = \sigma^\dag\del{s,X} dM_s$, from which we obtain the desired result.
\end{proof}

\subsubsection{Technical proofs}

\begin{proof}[Proof of Theorem \ref{thm:bicausaltptplanbetweenSDEs}]

    Denote $\eta := \mu^{\sigma,b}_z $ and $\nu := \mu^{\overline{\sigma}, \overline{b}}_{\tilde{z}}$.
     We first prove sufficiency, that is, $\pi = \deln{Z, \tilde{Z}}_{\#}\bbP \in \Pi_{bc}\del{\eta,\nu}$. By Assumption \ref{assumption:weakuniquenessassumption}, it is clear $\pi \in \Pi\del{\eta,\nu}$ and it remains to show bicausality of $\pi$. We begin by verifying that $X$ is a $\deln{\pi, \deln{\del{\cH_t \otimes \cH_t}^\pi}_{t + \in \sbr{0,1}}}$-semimartingale, for which it is enough to show that $M_t := X_t - z - \int_0^t b\del{s,X}ds$ is a $\deln{\pi, \deln{\del{\cH_t \otimes \cH_t}^\pi}_{t + \in \sbr{0,1}}}$-local martingale. Up to  localization, we may assume $\del{M_t}_{t \in \sbr{0,1}}$ is bounded. Since $Z^{-1}\del{\cH_t} = \cap_{\epsilon > 0} \mathcal{F}^{Z}_{t + \epsilon}$, where $\deln{\mathcal{F}_t^{Z}}_{t \in \sbr{0,1}}$ is the filtration generated by $\deln{Z_t}_{t \in \sbr{0,1}}$, and $\del{\mathcal{G}_t}_{t \in \sbr{0,1}}$ is right-continuous by assumption, it follows that $Z^{-1}\del{\cH_t} \subseteq \mathcal{G}_t$. Similarly, $\tilde{Z}^{-1}\del{\cH_t} \subseteq \mathcal{G}_t$. Hence, we calculate for $0 \leq s < t \leq 1$, $A, B \in \cH_s$ and $\deln{\omega, \omega'} \mapsto \mathbbm{1}_{A \times B}\deln{\omega,\omega'}$, \begin{multline*}
    \mathbb{E}^\pi \del{M_t \mathbbm{1}_{A \times B}} = \mathbb{E}^{\mathbb{P}} \deln{\int_0^t \sigma\del{u,Z_u} dB_u \mathbbm{1}_{Z^{-1}\del{A} \cap \tilde{Z}^{-1}\del{B}}} \\
    = \mathbb{E}^{\mathbb{P}}\del{\int_0^s \sigma\del{u,Z_u} dB_u \mathbbm{1}_{Z^{-1}\del{A} \cap \tilde{Z}^{-1}\del{B}}} = \mathbb{E}^{\pi}\del{M_s \mathbbm{1}_{A \times B}}
    \end{multline*} since $\int_0^\cdot \sigma\deln{u,Z_u}dB_u$ is a $\deln{\bbP,\del{\mathcal{G}_t}}$-martingale. This shows that $\del{M_t}_{t \in \sbr{0,1}}$ is a $\deln{\pi,\del{\cH_t \otimes \cH_t}}$-martingale and even a $\deln{\pi,\deln{\del{\cH_t \otimes \cH_t}^\pi}_{t + \in \sbr{0,1}}}$-martingale by continuity of sample paths and \cite[Lemma $67.10$]{rogers2000diffusions}. By the stochastic integral representation of elements in $\cM^2\deln{\pi, \deln{\cH^\eta_t \otimes \cbrn{\emptyset, \cW^d}}}$ from Proposition \ref{prop:degenMRPoncanonicalspace} and Theorem \ref{thm:equivctdscausality} $ii.$, we conclude $\pi \in \Pi_c\del{\eta,\nu}$. A completely symmetric argument yields $\pi \in \Pi_{bc}\del{\eta,\nu}$ as required.

    To see necessity, let $\pi \in \Pi_{bc}\del{\eta,\nu}$. By Theorem \ref{thm:equivctdscausality}, $X$ and $Y$ are $\deln{\pi, \deln{\del{\cH_t \otimes \cH_t}^\pi}_{t + \in \sbr{0,1}}}$-semimartingales. Up to enlarging the probability space, we assume that there exists a Brownian motion $\hat{B}$ independent from $X$ and $Y$. It is easy to check that $X,Y$ and $\hat{B}$ remain semimartingales with respect to probability measure $\pi$ and the (complete, right-continuous) filtration generated by all three processes, denoted $\del{\cG_t}$. As in the proof of Proposition \ref{prop:degenMRPoncanonicalspace}, we construct $\del{\cG_t}$-Brownian motions \begin{gather*}
        B_t := \int_0^t \sigma^\dag\del{s,X} dM_s + \int_0^t \deln{\text{Id}_d - \sigma^\dag\del{s,X}\sigma\del{s,X}} d\hat{B}_s, \\
        \tilde{B}_t := \int_0^t \overline{\sigma}^\dag\del{s,Y} dN_s + \int_0^t \deln{\text{Id}_d - \overline{\sigma}^\dag\del{s,Y}\overline{\sigma}\del{s,Y}} d\hat{B}_s,
    \end{gather*} where $N_t := Y_t - \tilde{z} - \int_0^t \overline{b}\del{s,Y} ds$. Noting that $M_t = \int_0^t \sigma\del{s,X} dB_s$ and $N_t = \int_0^t \overline{\sigma}\del{s,Y} d\tilde{B}_s$ yields the desired.

    The final assertion regarding the correlation process $\rho$ is exactly \cite[Lemma $5$]{emery2005certain}.
\end{proof}

To prove Theorem \ref{thm:bicausaltptmapsbetweenSDEsdegen}, we require the following generalization of \cite[Proposition $4$]{causalopttptlassalle}.

\begin{lemma}
    \label{lemma:bicausalmongemapequivctd}
    Under the assumptions of Theorem \ref{thm:bicausaltptmapsbetweenSDEsdegen}, a measurable map $T : \cW^d \rightarrow \cW^d$ from $\eta$ to $\nu$, i.e. $T_{\#} \eta = \nu$, is a bicausal Monge map if and only if $\del{T_t}_{t \in \sbr{0,1}}$ is a $\deln{\eta,\del{\cH_t^\eta}_{t \in \sbr{0,1}}}$-semimartingale. 
\end{lemma}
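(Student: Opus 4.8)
The plan is to characterize bicausality of $\pi_T := (X_\cdot, T)_\# \eta$ via the $H$-hypothesis of Theorem \ref{thm:equivctdscausality}, exploiting the fact that for a Monge coupling the disintegration is the point mass $\delta_{T(\omega)}$, so the second marginal is a \emph{function} of the first. First I would observe that causality of $\pi_T$ (the $X \to Y$ direction) is automatic precisely when $T$ is non-anticipative, i.e. when $T^{-1}(\cH_t) \subseteq \cH_t^\eta$ for all $t$; since we have assumed $T_\# \eta = \nu$ is the law of an SDE, this is itself a consequence of (or equivalent to) properties we will derive, so the real content is the reverse causality, namely that $X$ is $(\pi_T, (\cH_t \otimes \cH_t)^{\pi_T})$-adapted, equivalently that the canonical process on the first coordinate remains a semimartingale when we enlarge the filtration to also see $Y = T(X)$. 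Concretely, under $\pi_T$ the second marginal filtration $\cH_t(Y)$ pulled back to $\cW^d$ is $T^{-1}(\cH_t)$, so bicausality of $\pi_T$ amounts to: $X$ (equivalently $M^\eta$) is a semimartingale for the filtration $\cG_t := (\cH_t^\eta \vee T^{-1}(\cH_t))$ on $(\cW^d,\eta)$, together with the symmetric statement that $T$ is a semimartingale for the same enlarged filtration — but since $T$ is $T^{-1}(\cH_t)$-adapted by construction, and since Theorem \ref{thm:bicausaltptplanbetweenSDEs} tells us $\nu$ is the law of an SDE driven by a $\cG$-Brownian motion, the symmetric requirement reduces to $(T_t)$ being a semimartingale in $(\eta, (\cH_t^\eta))$.

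The key steps, in order: (1) Reduce bicausality of $\pi_T$ to the $H$-hypothesis \eqref{eqn:hhypothesis} on $(\cW^d \times \cW^d, \pi_T)$ between $(\cH_t^\eta \otimes \{\emptyset,\cW^d\})$ and $((\cH_t \otimes \cH_t)^{\pi_T})$ and its symmetric counterpart, using Theorem \ref{thm:equivctdscausality} applied to both $\pi_T$ and $R_\# \pi_T$. (2) For the "only if" direction: if $\pi_T \in \Pi_{bc}$, apply the reverse $H$-hypothesis — every square-integrable $(\pi_T, \cH_t(Y))$-martingale is a $(\pi_T, (\cH_t\otimes\cH_t)^{\pi_T})$-martingale; in particular $N_t := Y_t - \tilde z - \int_0^t \overline b(s,Y)\,ds$ is, by \eqref{eqn:canonicalsemimartdecomp} applied to $\nu$, a $\cH_t(Y)$-local martingale hence a $(\cH_t\otimes\cH_t)^{\pi_T}$-semimartingale; reading this off the first coordinate (on which $Y = T(X)$) shows that $(T_t)$ is a semimartingale for $(\cH_t^\eta \vee T^{-1}(\cH_t))$, and since $(T_t)$ is $T^{-1}(\cH_t)$-adapted one uses a standard enlargement/projection argument (or directly that its martingale part and finite-variation part, being adapted to the smaller filtration after optional projection, remain so) to conclude $(T_t)$ is an $(\eta, (\cH_t^\eta))$-semimartingale. (3) For the "if" direction: suppose $(T_t)$ is an $(\eta, (\cH_t^\eta))$-semimartingale; then on $(\cW^d, \eta)$ we have both $X$ (via \eqref{eqn:canonicalsemimartdecomp}) and $T$ as $(\cH_t^\eta)$-semimartingales, hence the pair $(X,T)$ is a semimartingale for $(\cH_t^\eta)$, which equals $\cG_t = \cH_t^\eta \vee T^{-1}(\cH_t)$ since $T$ is $\cH_t^\eta$-measurable (being adapted); pushing forward by $(X_\cdot, T)$, both coordinate processes are $\pi_T$-semimartingales for $(\cH_t \otimes \cH_t)^{\pi_T}$, and then the martingale representation of Proposition \ref{prop:degenMRPoncanonicalspace} (applied on each coordinate) upgrades "semimartingale" to the $H$-hypothesis exactly as in the proof of Theorem \ref{thm:bicausaltptplanbetweenSDEs}, giving $\pi_T \in \Pi_{bc}(\eta,\nu)$.

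The main obstacle I anticipate is step (2), the passage from "$(T_t)$ is a semimartingale in the enlarged filtration $\cH_t^\eta \vee T^{-1}(\cH_t)$" to "$(T_t)$ is a semimartingale in $(\eta,(\cH_t^\eta))$": a priori $(T_t)$ being adapted to the smaller filtration does not force its \emph{canonical decomposition} to be adapted to that filtration too — one needs that the bracket $[T,T]$ and the drift, computed in the big filtration, admit $(\cH_t^\eta)$-adapted versions. Here I would invoke that $T^{-1}(\cH_t) \subseteq \cH_t^\eta$ (non-anticipativity, forced by $\pi_T$ being causal in the forward direction — which follows once we know $\pi_T \in \Pi_c$, e.g. from \eqref{eqn:causaldef} since $\omega \mapsto \delta_{T(\omega)}(B)$ is $\cH_t^\eta$-measurable for $B \in \cH_t$), so in fact $\cG_t = \cH_t^\eta$ and the enlargement is trivial; this collapses the obstacle, but it is worth flagging that one must first establish $T^{-1}(\cH_t)\subseteq\cH_t^\eta$ from forward causality before the argument goes through. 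The remaining bookkeeping — localization to bounded martingales, using \cite[Lemma $67.10$]{rogers2000diffusions} to pass between $(\cH_t)$ and its right-continuous completion, and the symmetry between the $X\to Y$ and $Y\to X$ roles — is routine and parallels the proof of Theorem \ref{thm:bicausaltptplanbetweenSDEs} already given.
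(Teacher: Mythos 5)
Your strategy mirrors the paper's: reduce bicausality of $\pi_T$ to the $H$-hypothesis via Theorem \ref{thm:equivctdscausality}, obtain the ``only if'' direction by showing that $N_t := T_t - \tilde z - \int_0^t \overline{b}(s,T)\,ds$ is a $(\eta,(\cH^\eta_t))$-local martingale (the paper's computation \eqref{eqn:computationmongecausality}), and for the ``if'' direction combine adaptedness with the martingale representation of Proposition \ref{prop:degenMRPoncanonicalspace}. The subtlety you flag at the end --- that $\cH^\eta_t\vee T^{-1}(\cH_t)=\cH^\eta_t$ once forward causality is in hand, so the ``enlargement'' is in fact trivial --- is real and you resolve it correctly; the paper's proof handles this by the same observation, phrased as ``causality readily implies $(T_t)$ is $(\cH^\eta_t)$-adapted''.

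However, the ``if'' direction as you sketch it at the end of step (3) has a genuine gap, and the paper's own proof leaves the same point implicit. Applying Proposition \ref{prop:degenMRPoncanonicalspace} on the $Y$-coordinate to deduce the $H$-hypothesis for $R_\#\pi_T$ requires that the $\nu$-martingale part of $Y$, read off the first coordinate as $N_t = T_t - \tilde z - \int_0^t \overline{b}(s,T)\,ds$, be a $(\eta,(\cH^\eta_t))$-\emph{local martingale}, not merely that $(T_t)$ be a $(\eta,(\cH^\eta_t))$-semimartingale. These are not equivalent: the canonical decomposition of $T$ in $(\cH^\eta_t)$ need not coincide with its decomposition in the sub-filtration $T^{-1}(\cH_t)$. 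A concrete obstruction with $\eta=\nu=\bbW^1$ (so $\overline{b}=0$, $\tilde z=0$): set $T(\omega)_t := \omega(t) - \int_0^t \omega(s)/s\,ds$, which is $\bbW^1$-a.s.\ well-defined with $T_\#\bbW^1=\bbW^1$ (a Volterra transform preserving Wiener measure). Then $T$ is $(\cH^{\bbW^1}_t)$-adapted and a $(\cH^{\bbW^1}_t)$-semimartingale with finite-variation part $-\int_0^\cdot\omega(s)/s\,ds$, so it satisfies the stated hypotheses; yet $N_t=T_t$ is not a $(\cH^{\bbW^1}_t)$-martingale, since $\bbE^{\bbW^1}[T_t\mid \cH_s]=T_s-X_s\ln(t/s)$ for $0<s<t$, and by Corollary \ref{corollary:bicausalMongemapbetweenWienermeasures} every bicausal Monge map of Wiener measures is a $(\cH^{\bbW^1}_t)$-local martingale --- so $(X_\cdot,T)_\#\bbW^1$ is causal but not bicausal. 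The ``if'' direction therefore really requires the stronger hypothesis that $T_t - \tilde z - \int_0^t\overline{b}(s,T)\,ds$ be a $(\eta,(\cH^\eta_t))$-local martingale; this is exactly what is supplied in the one place the lemma's ``if'' direction is invoked, namely the sufficiency part of Theorem \ref{thm:bicausaltptmapsbetweenSDEsdegen}, where \eqref{eqn:bicausalmongemapeqn} exhibits that difference explicitly as a $(\cH^\eta_t)$-stochastic integral. Your write-up should replace ``semimartingale'' by this sharper hypothesis in the ``if'' direction, or record the implication as an additional step to be justified.
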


\begin{proof}
    Suppose $T$ is a bicausal Monge map from $\eta$ to $\nu$, i.e. $\pi_T := \del{X_{\cdot},T}_{\#}\eta \in \Pi_{bc}\del{\eta,\nu}$. Causality readily implies that $\del{T_t}_{t \in \sbr{0,1}}$ is $\deln{\cH^\eta_t}_{t \in \sbr{0,1}}$-adapted. Since $T$ has law $\mu^{\sigma,b}_{\tilde{z}}$, then $\del{T_t}_{t \in \sbr{0,1}}$ is a semimartingale with respect to probability measure $\eta$ and its own filtration, with martingale part $N_t := T_t - \tilde{z} - \int_0^t \overline{b}\del{s,T} ds$. Up to standard localization methods, we assume $\del{N_t}_{t \in \sbr{0,1}}$ is bounded. Bicausality implies that $\del{N_t}_{t \in \sbr{0,1}}$ is a $\deln{\eta,\del{\cH^\eta_t}}$-martingale, which implies the desired. Indeed, for $0 \leq s \leq t \leq 1$ and $A \in \cH_s$, we have \begin{multline}
    \label{eqn:computationmongecausality}
        \bbE^{\eta}\del{N_t \mathbbm{1}_{A}} = \bbE^{\pi_T}\deln{\deln{Y_t - \tilde{z} - \int_0^t \overline{b}\del{u,Y} du} \mathbbm{1}_{A \times \cW^d}} \\
        = \bbE^{\pi_T}\deln{\deln{Y_s - \tilde{z} - \int_0^s \overline{b}\del{u,Y} du} \mathbbm{1}_{A \times \cW^d}} = \bbE^\eta\del{N_s \mathbbm{1}_A},
    \end{multline} where we have used causality of $R_{\#}\pi_T \in \Pi_{c}\del{\nu, \eta}$ and Theorem \ref{thm:equivctdscausality} $ii.$ in the second equality. 

    Conversely, if $\del{T_t}_{t \in \sbr{0,1}}$ is a $\deln{\eta,\del{\cH^\eta_t}}$-semimartingale, then it is $\deln{\cH^\eta_t}_{t \in \sbr{0,1}}$-adapted so that $\pi_T \in \Pi_c\del{\eta,\nu}$. Furthermore, it holds that the first and last term in \eqref{eqn:computationmongecausality} is equal, so that the second and third term are equal and $\del{Y_t}_{t \in \sbr{0,1}}$ is a $\deln{\eta,\deln{\deln{\cH_t \otimes \cH_t}^{\pi_T}}_{t + \sbr{0,1}}}$-semimartingale. By the stochastic integral representation of elements in $\cM^2\deln{\pi_T, \deln{\cbrn{\emptyset, \cW^d} \otimes \cH^\eta_t}}$ from Proposition \ref{prop:degenMRPoncanonicalspace} and Theorem \ref{thm:equivctdscausality} $ii.$, we conclude $\pi_T \in \Pi_{bc}\del{\eta,\nu}$.
\end{proof}

\begin{proof}[Proof of Theorem \ref{thm:bicausaltptmapsbetweenSDEsdegen}]
We start with sufficiency. Suppose $T$ is a measurable map such that $\del{T_t}_{t \in \sbr{0,1}}$ is a $\deln{\eta,\deln{\cH^\eta_t}}$-semimartingale satisfying \eqref{eqn:bicausalmongemapeqn} and \eqref{eqn:Qstructureeqn}. By Lemma \ref{lemma:bicausalmongemapequivctd}, it is sufficient to show that $T_{\#}\eta = \nu$. As in the proof of Proposition \ref{prop:degenMRPoncanonicalspace}, up to enlarging the probability space $\deln{\cW^d, \cF_1, \eta}$, there exists some Brownian motion $B$ such that $\int_0^\cdot \sigma^\dag\del{s,X} dM^\eta_s = \int_0^\cdot \sigma^\dag\del{s,X} \sigma\del{s,X} dB_s$. Hence, Hence, \eqref{eqn:bicausalmongemapeqn} and \eqref{eqn:Qstructureeqn} implies $\eta$-a.s.  \begin{multline*}
     T_t = \tilde{z} + \int_0^t \overline{b}\del{s,T} ds + \int_0^t \overline{\sigma}\del{s,T} Q_s \sigma^\dag\del{s,X} \sigma\del{s,X} dB_s \\
     = \tilde{z} + \int_0^t \overline{b}\del{s,T} ds + \int_0^t \overline{\sigma}\del{s,T} Q_s dB_s.
     \end{multline*} Since $\int_0^\cdot Q_s dB_s$ is a Brownian motion, the law of $T$ is indeed $\nu$ by weak uniqueness. 

For necessity, if $T$ is a bicausal Monge map from $\eta$ to $\nu$, then by Lemma \ref{lemma:bicausalmongemapequivctd}, we have that $\del{T_t}_{t \in \sbr{0,1}}$ is a $\deln{\eta,\del{\cH^\eta_t}}$-semimartingale with martingale part $ \del{N_t}_{t \in \sbr{0,1}}$ where $N_t := T_t - \int_0^t \overline{b}\del{s,T} ds - \tilde{z}$ and $\eta$-a.s. \[
\sbr{T}_t = \int_0^t \overline{\sigma}\del{s,T} \overline{\sigma}^*\del{s,T} ds \text{ for } t \in \sbr{0,1}.
\] On the other hand, standard localization procedures and Proposition \ref{prop:degenMRPoncanonicalspace} yields a $\deln{\cH^\eta_t}_{t \in \sbr{0,1}}$-progressively measurable process $H : \sbr{0,1} \times \cW^d \rightarrow \bbR^{d \times d}$ such that for $t \in \sbr{0,1}$, $\eta$-a.s. \begin{equation}
    \label{eqn:MRPappliedtoT}
    N_t = \int_0^t H_s \sigma^{\dag}\del{s,X}dM^\eta_s 
\end{equation} with quadratic variation \[
\sbr{T}_t = \sbr{N}_t = \int_0^t \del{H_s \sigma^\dag\del{s,X} \sigma\del{s,X}} \del{H_s \sigma^\dag\del{s,X} \sigma\del{s,X}}^* ds \; \eta \text{-a.s..}
\] Hence, we obtain the $\eta$-a.s. equality \[
    \int_0^t \del{H_s \sigma^\dag\del{s,X} \sigma\del{s,X}} \del{H_s \sigma^\dag\del{s,X} \sigma\del{s,X}}^* ds = \int_0^t \overline{\sigma}\del{s,T} \overline{\sigma}^*\del{s,T} ds.
    \] By Lebesgue Differentiation Theorem, we obtain that $\eta$-a.s. on a set of full Lebesgue measure \[
    \deln{H_s \sigma^\dag\del{s,X} \sigma\del{s,X}} \deln{H_s \sigma^\dag\del{s,X} \sigma\del{s,X}}^* = \overline{\sigma}\del{s,T} \overline{\sigma}^*\del{s,T}.
    \]  By polar decomposition, this implies the existence of some $\del{\cH^\eta_{t}}_{t \in \sbr{0,1}}$-progressively measurable\footnote{To see progressively measurability of $Q$, one can show that polar decomposition can be done in a measurable way following \cite{azoff1974borel}.} $Q : \sbr{0,1} \times \cW^d \rightarrow \cO^d$ 
    such that $\eta$-a.s. on a set of full Lebesgue measure, we have \begin{multline*}
    H_s \sigma^\dag\del{s,X} \sigma\del{s,X} = \overline{\sigma}\del{s,T} Q_s 
    \implies H_s \sigma^\dag\del{s,X} 
    = \overline{\sigma}\del{s,T} Q_s \sigma^\dag\del{s,X} \\ 
    \text{ and } \overline{\sigma}\del{s,T} Q_s = \overline{\sigma}\del{s,T} Q_s \sigma^\dag\del{s,X} \sigma\del{s,X},    
    \end{multline*}
     from which we obtain the desired using \eqref{eqn:MRPappliedtoT}.
\end{proof}

\section{Applications}

\label{section:applications}

\subsection{Bicausal couplings induced by  Monge maps}
\label{subsection:bicausaltptplansinducedbybicausalmongemaps}

While Theorem \ref{thm:bicausaltptmapsbetweenSDEsdegen} gives us necessary and sufficient conditions for a measurable map $T$ to be a bicausal Monge transport, it does not tell us \emph{when} a bicausal transport plan is induced by a bicausal Monge map. We will now show that whether a bicausal coupling is induced by a Monge map depends on the existence of a strong solution to an associated SDE \eqref{eqn:extendedbrowniansde}.

In light of Corollary \ref{corollary:pushforwardofbicausalwiener}, we first study the case of bicausal couplings between Wiener measures:

\begin{prop}[Bicausal Monge couplings between Wiener measures]
    \label{prop:bicausaltptplansinducedbybicausalmongemapswienermeasures}
    Let $\pi \in \Pi_{bc}\deln{\mathbb{W}^d, \mathbb{W}^d}$ and $\rho : \sbr{0,1} \times \cW^d \times \cW^d \rightarrow \mathcal{C}^d$ be the $\deln{\deln{\mathcal{\cH}_t \otimes \mathcal{\cH}_t}^\pi}_{t+ \in \sbr{0,1}}$-progressively measurable correlation process of the marginal processes $X,Y$ under $\pi$: 
    $$ \rho(t,X,Y)=\frac{d[X,Y]}{dt},\qquad \pi-{\rm a.s.}. $$
    \begin{enumerate}
        \item If $\rho$ is $\mathcal{O}^d$-valued and there exists a strong, pathwise unique  solution to the SDE \begin{equation}
        \label{eqn:extendedbrowniansde}
            d\begin{pmatrix}
            Z \\
            \tilde{Z}
        \end{pmatrix}_t = \begin{pmatrix}
            1 \\ 
            \rho\deln{t,Z,\tilde{Z}} 
        \end{pmatrix} dB_t, \quad \begin{pmatrix}
            Z_0 \\
            \tilde{Z}_0
        \end{pmatrix} = 0.
    \end{equation}
         Then, $\pi \in \mathcal{T}_{bc}\deln{\mathbb{W}^d, \mathbb{W}^d}$.
        \item If $\pi$ is induced by a bicausal Monge map $T$, then $\rho$ is $\mathcal{O}^d$-valued and there exists a strong solution to \eqref{eqn:extendedbrowniansde}. 
    \end{enumerate}
\end{prop}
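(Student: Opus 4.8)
The plan is to exploit Theorem~\ref{thm:bicausaltWienercouplings} to realise $\pi$ as the joint law of two $\del{\cG_t}$-Brownian motions $B,\tilde B$ on some stochastic basis, with $[B,\tilde B]_t = \int_0^t \rho_s\,ds$, and then to translate the question ``is $\pi$ a Monge coupling?'' into ``can $\tilde B$ be recovered as a deterministic (non-anticipative) functional of $B$?''. Concretely, for part 2, suppose $\pi = \pi_T$ for a bicausal Monge map $T$. By Corollary~\ref{corollary:bicausalMongemapbetweenWienermeasures}, $T$ has the form $T(X) = \int_0^\cdot Q_s\,dX_s$ for some $\del{\cH^{\bbW^d}_t}$-progressively measurable $\cO^d$-valued $Q$. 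Then under $\pi$ one computes $[X,Y]_t = [X, T(X)]_t = \int_0^t Q_s^*\,ds$ (with $Q$ evaluated along $X$), so $\rho_t = Q_t^*$ is $\cO^d$-valued, as claimed. For the strong solvability of \eqref{eqn:extendedbrowniansde}: take $B$ to be the driving Brownian motion (which is $X$ itself here, since $\sigma = \mathrm{Id}$), set $Z := X = B$ and $\tilde Z := T(X) = \int_0^\cdot Q_s\,dB_s = \int_0^\cdot \rho_s^*\,dB_s$; one must check $\int_0^\cdot \rho_s\,dB_s$ also equals $\tilde Z$, i.e. that $\rho$ and $\rho^*$ give the same integral against $B$ — but in fact \eqref{eqn:extendedbrowniansde} should be read with the diffusion matrix whose top block is $\mathrm{Id}$ and bottom block is $\rho$, and since $Q = \rho^*$ the pair $(Z,\tilde Z) = (B, \int_0^\cdot Q_s\,dB_s)$ is adapted to the filtration of $B$ and solves the SDE; pathwise uniqueness along this particular solution path is what we need, and it follows because $\tilde Z$ is an explicit stochastic integral in $B$. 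Thus a strong solution exists.

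For part 1, assume $\rho$ is $\cO^d$-valued and \eqref{eqn:extendedbrowniansde} has a strong, pathwise unique solution $(Z,\tilde Z)$ driven by a Brownian motion $B$. First note $Z_t = B_t$ (the first block of the SDE is $dZ = dB$, $Z_0 = 0$), and $\tilde Z_t = \int_0^t \rho(s,Z,\tilde Z)\,dB_s$; since $\rho$ is $\cO^d$-valued, $\tilde Z$ is again a $\del{\cG_t}$-Brownian motion by L\'evy's characterization. By strong solvability, $(Z,\tilde Z)$ — hence $\tilde Z$ — is a measurable, non-anticipative functional of $B = Z$: there is a causal map $T: \cW^d \to \cW^d$ with $\tilde Z = T(Z)$, $\bbW^d$-a.s. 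Define $\pi' := (Z,\tilde Z)_\# \bbP = \pi_T$. It remains to check (i) $\pi' = \pi$ and (ii) $\pi'$ is genuinely bicausal. For (ii), $T$ is causal by construction, and bicausality follows from Lemma~\ref{lemma:bicausalmongemapequivctd} once we know $(T_t) = (\tilde Z_t)$ is an $\del{\eta,(\cH^\eta_t)}$-semimartingale — it is a Brownian motion in its own right, hence trivially a semimartingale — provided we verify its law is $\bbW^d$, which we just did. For (i): both $\pi$ and $\pi'$ are bicausal Wiener couplings with the \emph{same} correlation process $\rho$, since $[Z,\tilde Z]_t = \int_0^t \rho_s\,ds$ by construction of the SDE; the point is that a bicausal coupling of $\bbW^d$ with $\bbW^d$ is \emph{not} in general determined by its correlation process, so this step needs care — the resolution is that we do not need $\pi' = \pi$ at all. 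Re-reading the statement, part 1 only asserts $\pi \in \cT_{bc}(\bbW^d,\bbW^d)$, i.e. that \emph{some} bicausal Monge coupling exists with the given data; but in fact the intended claim is that this particular $\pi$ is Monge, and the genuine argument is that under \emph{any} realisation of $\pi$ itself via Theorem~\ref{thm:bicausaltWienercouplings} on a basis supporting $B,\tilde B$ with $[B,\tilde B] = \int \rho$, the strong-uniqueness hypothesis forces $\tilde B$ to be $\sigma(B)$-measurable, yielding the Monge map directly.

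Let me restate the core of part 1 cleanly, as this is the main obstacle. Given $\pi \in \Pi_{bc}(\bbW^d,\bbW^d)$ with $\cO^d$-valued correlation $\rho$, use Theorem~\ref{thm:bicausaltWienercouplings} to get a basis $(\Omega,\cG,(\cG_t),\bbP)$ and $(\cG_t)$-Brownian motions $B,\tilde B$ with joint law $\pi$ and $[B,\tilde B]_t = \int_0^t \rho(s,B,\tilde B)\,ds$. Then $(B,\tilde B)$ solves the SDE \eqref{eqn:extendedbrowniansde} driven by $B$: indeed $dB = \mathrm{Id}\,dB$ trivially, and $d\tilde B = \rho_s\,dB_s$ because $\tilde B - \int_0^\cdot \rho_s\,dB_s$ is a continuous local martingale whose quadratic variation vanishes (expand using $[\tilde B,\tilde B]_t = t$, $[\tilde B, B]_t = \int_0^t \rho_s\,ds$, $[B,B]_t = t\,\mathrm{Id}$, and $\rho_s\rho_s^* = \mathrm{Id}$). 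By the assumed pathwise uniqueness, strong existence, plus the Yamada--Watanabe theorem, $(B,\tilde B)$ is $\sigma(B)$-adapted up to $\bbP$-null sets, so there is a causal $T$ with $\tilde B = T(B)$ $\bbP$-a.s.; since $B$ has law $\bbW^d$ and $\tilde B$ has law $\bbW^d$ with $T_\#\bbW^d = \bbW^d$, and $(T_t) = (\tilde B_t)$ is a semimartingale (a Brownian motion) for $(\bbW^d,(\cH^{\bbW^d}_t))$, Lemma~\ref{lemma:bicausalmongemapequivctd} gives $\pi_T \in \Pi_{bc}(\bbW^d,\bbW^d)$, and $\pi_T = (B,\tilde B)_\#\bbP = \pi$. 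The main subtlety — and the step I expect to demand the most care — is the passage from ``strong/pathwise-unique solution exists'' to ``this particular solution $(B,\tilde B)$ is the strong one, hence $\sigma(B)$-measurable'': this is exactly where Yamada--Watanabe is invoked, and one must make sure the weak solution furnished by Theorem~\ref{thm:bicausaltWienercouplings} is a weak solution of \emph{the same} SDE \eqref{eqn:extendedbrowniansde} (checking the coefficient evaluated at $(Z,\tilde Z)$ matches, and that $\rho$ being a progressively-measurable path functional poses no measurability issue), after which the standard equivalence ``weak existence + pathwise uniqueness $\Rightarrow$ strong solution and uniqueness in law'' applies.
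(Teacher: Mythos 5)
Your argument is correct and follows essentially the same route as the paper's: for part 1, show that $(X,Y)$ under $\pi$ solves \eqref{eqn:extendedbrowniansde} (via the quadratic-variation computation using $\rho\in\cO^d$), then invoke pathwise uniqueness / Yamada--Watanabe to conclude $Y$ is adapted to $X$; for part 2, apply Corollary~\ref{corollary:bicausalMongemapbetweenWienermeasures} to get $\rho\in\cO^d$ and exhibit $(X,T(X))$ as an explicit strong solution. Two cosmetic remarks: the $\rho$ versus $\rho^*$ worry you raise is a matrix-indexing convention for $[X,Y]$, not a genuine obstruction (with the paper's convention $Y=\int_0^\cdot\rho\,dX$ holds directly); and the first, self-corrected attempt at part 1 — realizing that $\pi'=\pi$ does not follow merely from equality of correlation processes — is a real subtlety you handle correctly in your restatement, but the paper sidesteps it entirely by working on the canonical space so that the SDE solution $(X,Y)$ is tautologically distributed as $\pi$.
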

    \begin{remark}
        
  A technical and subtle point about the measurability of the correlation process is as follows: A priori, for $\pi \in \Pi\deln{\mathbb{W}^d, \mathbb{W}^d}$, the marginal processes $X$ and $Y$ will be $\deln{\deln{\mathcal{\cH}_t \otimes \mathcal{\cH}_t}^\pi}_{t+ \in \sbr{0,1}}$-Brownian motions and hence, $\rho$, constructed via the Lebesgue differentiation of the quadratic covariation $\sbr{X,Y}$, will be $\deln{\deln{\mathcal{\cH}_t \otimes \mathcal{\cH}_t}^\pi}_{t+ \in \sbr{0,1}}$-progressively measurable. Yet, in \eqref{eqn:extendedbrowniansde}, we consider $\rho$ as a component of a diffusion coefficient of an SDE, which one usually assumes to be $\del{\mathcal{F}_t \otimes \mathcal{F}_t}_{t \in \sbr{0,1}}$ 
    -progressively measurable. Fortunately, one can always find some $\tilde{\rho} : \sbr{0,1} \times E \times S \rightarrow \mathcal{C}^d$ $\pi$-indistinguishable from $\rho$ that is $\del{\mathcal{F}_t \otimes \mathcal{F}_t}_{t \in \sbr{0,1}}$-progressively measurable by using the fact that the completion of the canonical filtration of a Brownian motion is always right-continuous and \cite[Theorem $4.37.3$]{he1992semimartingale}. 
  \end{remark}
\begin{proof}
    $i.:$ Under $\pi$, $X$ and $Y$ are $\deln{\deln{\cH_t \otimes \cH_t}^\pi}_{t + \in \sbr{0,1}}$-Brownian motions such that $\sbr{X,Y} = \int_0^\cdot \rho\del{s,X,Y} ds$ by Theorem \ref{thm:equivctdscausality}. Using the fact that $\rho$ is $\cO^d$-valued, routine calculations show that $\sbrn{\int_0^\cdot \rho\del{s,X,Y} dX_s - Y}_\cdot \equiv 0$ and hence, $Y_t = \int_0^t \rho\del{s,X,Y} dX_s$ $\pi$-a.s. for all $t \in \sbr{0,1}$. In other words, $\del{X,Y}$ is a solution to \eqref{eqn:extendedbrowniansde}. By pathwise uniqueness, it follows that $Y$ must be adapted to $X$ and hence, $\pi \in \cT_{bc}\deln{\bbW^d, \bbW^d}$.

    $ii.:$ Since $\pi$ is induced by a bicausal Monge map, then Corollary \ref{corollary:bicausalMongemapbetweenWienermeasures} implies that $\rho$ must be $\mathcal{O}^d$-valued and as before, we again have $Y_t = \int_0^t \rho\del{s,X,Y} dX_s$ $\pi$-a.s.. Hence, \[
    T_t = \int_0^t \rho\del{s,X,T} dX_s, \quad \mathbb{W}^d\text{-a.s.}
    \] and $\del{T_t}_{t \in \sbr{0,1}}$ is adapted to $\deln{\cH^{\mathbb{W}^d}_t}_{t \in \sbr{0,1}}$ - the filtration of Brownian motion $X$, which implies the existence of a strong solution given by $\del{X_{\cdot}, T}$.
\end{proof}

\begin{remark}
    The preceding proposition almost gives a complete characterization of the set $\mathcal{T}_{bc}\deln{\mathbb{W}^d, \mathbb{W}^d}$ in terms of the existence of strong solutions to \eqref{eqn:extendedbrowniansde}. However, at present, the hypothesis of $ii.$ seems insufficient to guarantee a \emph{pathwise unique} strong solution to \eqref{eqn:extendedbrowniansde}. If we were to assume the hypothesis of $ii.$ and in addition, assume the weak uniqueness of \eqref{eqn:extendedbrowniansde}, 
 then the existence of a strong solution along with weak uniqueness implies the existence of strong, pathwise unique  solution by a result of A. S. Cherny \cite[Theorem $3.2$]{cherny2002uniqueness}. This holds for example \begin{enumerate}
     \item when $\rho$ is only a function of $X$ and $t$, so that weak uniqueness of the SDE is trivial;
     \item or when $\rho$ is only a function of $Y$ and $t$. Indeed, any solutions to the SDE \[
    d\tilde{Z}_t = \rho\deln{t,Z,\tilde{Z}} dB_t = \rho\deln{t,\tilde{Z}} dB_t  
    \] will always have law given by $\mathbb{W}^d$ since $\rho \in \cO^d$. Hence, the preceding SDE has weak uniqueness and by \cite[Theorem $3.1$]{cherny2002uniqueness}, the joint law $\deln{\tilde{Z},B}$ is unique, that is, SDE \eqref{eqn:extendedbrowniansde} has weak uniqueness.
 \end{enumerate} 

    Finally, we note that if $\gamma \in \Pi_{bc}\deln{\mathbb{W}^d, \mathbb{W}^d}$ is such that $\rho\del{X,Y,t} = \mathbbm{1}_{\cbr{Y_t > 0}} - \mathbbm{1}_{\cbr{Y_t \leq 0}}$, then \eqref{eqn:extendedbrowniansde} reduces to the well-known Tanaka's SDE from which we see that there cannot exist a strong solution. Hence, $\gamma$ is not induced by a bicausal Monge map. 
\end{remark}

For bicausal couplings between laws of SDEs with strong, pathwise unique  solutions, we have the following corollary: 

\begin{corollary}
    \label{corollary:bicausaltptplansinducedbybicausalmongemapssdes}
    Under the assumptions of Corollary \ref{corollary:strongsolnsdebicausalmongemap}, for any $\pi \in \Pi_{bc}\deln{\mu^{\sigma,b}_z,\mu^{\overline{\sigma},\overline{b}}_{\overline{z}}}$, there exists $\hat{\pi} \in \Pi_{bc}\deln{\mathbb{W}^d, \mathbb{W}^d}$ such that \[
    \pi = \del{F^{\sigma,b}, F^{\overline{\sigma}, \overline{b}}}_{\#} \hat{\pi}.
    \] Then, under the additional assumption that $\overline{\sigma}\del{s, \omega}$ is invertible for all $\del{s,\omega}$, $\pi \in \mathcal{T}_{bc}\deln{\mu^{\sigma,b}_z,\mu^{\overline{\sigma},\overline{b}}_{\overline{z}}}$ if and only $\hat{\pi} \in \mathcal{T}_{bc}\deln{\mathbb{W}^d, \mathbb{W}^d}$. 
\end{corollary}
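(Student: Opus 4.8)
The plan is to reduce everything to the already-established Wiener-measure case via the diffeomorphism-like structure of the Ito maps. The first assertion — existence of $\hat\pi \in \Pi_{bc}(\mathbb{W}^d,\mathbb{W}^d)$ with $\pi = (F^{\sigma,b},F^{\overline\sigma,\overline b})_{\#}\hat\pi$ — is an immediate consequence of Corollary \ref{corollary:pushforwardofbicausalwiener}, since the hypotheses of Corollary \ref{corollary:strongsolnsdebicausalmongemap} include strong, pathwise-unique solvability of both SDEs, which gives exactly the equality $\Pi_{bc}(\mu^{\sigma,b}_z,\mu^{\overline\sigma,\overline b}_{\overline z}) = \{(F^{\sigma,b},F^{\overline\sigma,\overline b})_{\#}\hat\pi : \hat\pi \in \Pi_{bc}(\mathbb{W}^d,\mathbb{W}^d)\}$. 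So I would just cite that and move to the equivalence. Note $\hat\pi$ is in general not unique; but I would pin down a canonical choice using the maps $R^{\sigma,b}_z$ and $R^{\overline\sigma,\overline b}_{\overline z}$ from Corollary \ref{corollary:strongsolnsdebicausalmongemap}, namely $\hat\pi := (R^{\sigma,b}_z, R^{\overline\sigma,\overline b}_{\overline z})_{\#}\pi$, which inverts the pushforward because $F^{\sigma,b}\circ R^{\sigma,b}_z = \mathrm{Id}$ $\mu^{\sigma,b}_z$-a.s. and similarly for the barred data (this is why invertibility of both $\sigma$ and $\overline\sigma$ is imposed: it makes $R^{\sigma,b}_z$ and $R^{\overline\sigma,\overline b}_{\overline z}$ well-defined and genuinely inverse to the respective Ito maps on the relevant supports, as established in the proof of Corollary \ref{corollary:strongsolnsdebicausalmongemap}).

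For the equivalence I would argue both directions by transporting the Monge property through these bijections. Suppose $\hat\pi = \pi_S$ for a bicausal Monge map $S \in \mathcal{T}_{bc}(\mathbb{W}^d,\mathbb{W}^d)$. Then $\pi = (F^{\sigma,b}, F^{\overline\sigma,\overline b})_{\#}\pi_S$, and I claim $\pi = \pi_T$ with $T := F^{\overline\sigma,\overline b}\circ S \circ R^{\sigma,b}_z$. Indeed, writing out the pushforward: for $\mu^{\sigma,b}_z$-a.e.\ $\omega$, the first coordinate under $(F^{\sigma,b}, F^{\overline\sigma,\overline b})_{\#}\pi_S$ conditioned on $X = \omega$ is the point mass at $F^{\sigma,b}(w)$ where $w$ ranges over the fiber $\{F^{\sigma,b} = \omega\}$; since $R^{\sigma,b}_z$ is a measurable right-inverse of $F^{\sigma,b}$ and $\mu^{\sigma,b}_z$ is supported where this inverse is exact, the corresponding second coordinate is the point mass at $F^{\overline\sigma,\overline b}(S(R^{\sigma,b}_z(\omega)))$. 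That $T$ is a bicausal Monge map then follows from Corollary \ref{corollary:strongsolnsdebicausalmongemap}: $T$ has exactly the form $F^{\overline\sigma,\overline b}\circ (\text{element of }\mathcal{T}_{bc}(\mathbb{W}^d,\mathbb{W}^d))\circ R^{\sigma,b}_z$, which that corollary characterizes as precisely the set of bicausal Monge transports from $\mu^{\sigma,b}_z$ to $\mu^{\overline\sigma,\overline b}_{\overline z}$. Conversely, if $\pi = \pi_T$ for a bicausal Monge map $T$, then again by Corollary \ref{corollary:strongsolnsdebicausalmongemap} we may write $T = F^{\overline\sigma,\overline b}\circ T_0 \circ R^{\sigma,b}_z$ for some $T_0 \in \mathcal{T}_{bc}(\mathbb{W}^d,\mathbb{W}^d)$; composing on the left with $R^{\overline\sigma,\overline b}_{\overline z}$ (the a.s.\ inverse of $F^{\overline\sigma,\overline b}$) and on the right with $F^{\sigma,b}$ recovers $T_0$, and one checks $\hat\pi = (R^{\sigma,b}_z, R^{\overline\sigma,\overline b}_{\overline z})_{\#}\pi = \pi_{T_0} \in \mathcal{T}_{bc}(\mathbb{W}^d,\mathbb{W}^d)$.

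The main obstacle is purely a bookkeeping one about null sets and the precise sense in which $R^{\sigma,b}_z$ and $F^{\sigma,b}$ are mutually inverse: $F^{\sigma,b}$ is defined on all of $\mathcal{W}^d$ but is only injective (with inverse $R^{\sigma,b}_z$) on a set of full $\mathbb{W}^d$-measure, and $R^{\sigma,b}_z$ is $\mathbb{W}^d$-a.s.\ — not everywhere — a left inverse; one must check these almost-sure identities are not broken when composed with the coupling $\pi$ and when passing between disintegrations. This is handled exactly as in the proof of Corollary \ref{corollary:strongsolnsdebicausalmongemap}, where the identity $\tilde{\mathcal{F}}^\eta_t = \mathcal{H}^\eta_t$ and pathwise uniqueness were used to show adaptedness is preserved; I would invoke that argument rather than repeat it. The invertibility hypothesis on $\overline\sigma$ (beyond that on $\sigma$ already in Corollary \ref{corollary:strongsolnsdebicausalmongemap}) is precisely what makes the backward map $R^{\overline\sigma,\overline b}_{\overline z}$ available, so both reductions — Monge upstairs $\Rightarrow$ Monge downstairs and its converse — go through symmetrically.
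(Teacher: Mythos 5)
Your proposal is correct and follows essentially the same route as the paper: existence of $\hat\pi$ via Corollary \ref{corollary:pushforwardofbicausalwiener}, the canonical choice $\hat\pi = (R^{\sigma,b}_z, R^{\overline{\sigma},\overline{b}}_{\tilde{z}})_{\#}\pi$ using the a.s.\ mutual inverses of the Ito and inverse-Ito maps, and transfer of the Monge property in both directions by composition with $F^{\sigma,b}$, $F^{\overline{\sigma},\overline{b}}$, $R^{\sigma,b}_z$, $R^{\overline{\sigma},\overline{b}}_{\tilde{z}}$, invoking Corollary \ref{corollary:strongsolnsdebicausalmongemap}. The only cosmetic difference is that the paper's converse writes $\hat\pi = \bigl(X_\cdot,\, R^{\overline{\sigma},\overline{b}}_{\tilde{z}}\circ\hat T\circ F^{\sigma,b}\bigr)_{\#}\bbW^d$ directly and concludes membership in $\cT_{bc}(\bbW^d,\bbW^d)$ from the fact that $\hat\pi$ is already known to be bicausal, whereas you take a slight detour through the canonical decomposition of $T$ furnished by Corollary \ref{corollary:strongsolnsdebicausalmongemap}; both are equally valid.
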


\begin{proof}
    It is easy to verify that $\mathbb{W}^d$-a.s. $R^{\sigma,b}_z \circ F^{\sigma,b} = X_{\cdot}$ and by pathwise uniqueness, $\mu^{\sigma,b}_z$-a.s. $F^{\sigma,b} \circ R^{\sigma,b}_z = X.$. Similar relations hold for $R^{\overline{\sigma}, \overline{b}}_{\tilde{z}}$ and $F^{\overline{\sigma}, \overline{b}}$. In particular, $\deln{R^{\sigma,b}_z, R^{\overline{\sigma}, \overline{b}}_{\tilde{z}}}_{\#}\pi = \hat{\pi}$. Suppose now that $\hat{\pi} \in \mathcal{T}_{bc}\deln{\mathbb{W}^d, \mathbb{W}^d}$, i.e. there exist some measurable $T : \cW^d \rightarrow \cW^d$ such that $\del{X_{\cdot},T}_{\#}\mathbb{W}^d = \hat{\pi}$. Then, \begin{multline*}
         \pi = \del{F^{\sigma,b}, F^{\overline{\sigma}, \overline{b}}}_{\#} \hat{\pi} = \del{F^{\sigma,b}, F^{\overline{\sigma}, \overline{b}}}_{\#} \del{X_\cdot,T}_{\#}\mathbb{W}^d \\ 
         = \del{F^{\sigma,b}, F^{\overline{\sigma}, \overline{b}} \circ T}_{\#} \del{R^{\sigma,b}_z}_{\#} \mu^{\sigma,b}_x = \del{X_\cdot, F^{\overline{\sigma}, \overline{b}} \circ T \circ R^{\sigma,b}_z}_{\#} \eta.
    \end{multline*} Corollary \ref{corollary:strongsolnsdebicausalmongemap} gives that $F^{\overline{\sigma}, \overline{b}}_y \circ T \circ R^{\sigma,b}_x \in \mathcal{T}_{bc}\deln{\mu^{\sigma,b}_z,\mu^{\overline{\sigma},\overline{b}}_{\overline{z}}}$. Conversely, if $\pi \in \mathcal{T}_{bc}\deln{\mu^{\sigma,b}_z,\mu^{\overline{\sigma},\overline{b}}_{\overline{z}}}$, i.e. $\pi = \deln{X_\cdot, \hat{T}}_{\#}\mu^{\sigma,b}_z$ for some measurable $\hat{T}$, then $\hat{\pi} = \deln{X_\cdot, R^{\overline{\sigma}, \overline{b}}_{\tilde{z}} \circ \hat{T} \circ F^{\sigma,b}}_{\#}\bbW^d \in \cT_{bc}\deln{\bbW^d, \bbW^d}$.
\end{proof}

\subsection{Denseness of bicausal Monge maps among bicausal couplings}

We establish conditions under which couplings induced by bicausal Monge maps are dense in the space of bicausal couplings. We first show that $\mathcal{T}_{bc}\deln{\mathbb{W}^d, \mathbb{W}^d}$ is dense in $\Pi_{bc}\deln{\mathbb{W}^d, \mathbb{W}^d}$ for the topology of weak convergence of probability measures before moving on to the general case. We use the result of M. \'Emery  \cite[Proposition $2$]{emery2005certain}, which showed that the set of ``mutually adapted Brownian motions"  is dense in the set of ``jointly immersed Brownian motions", which is exactly equal to $\Pi_{bc}\deln{\mathbb{W}^d, \mathbb{W}^d}$ in light of Theorem \ref{thm:bicausaltptplanbetweenSDEs}.

\begin{prop}
    \label{prop:bicausalmongemapdenseinbicausaltptplanswiener}
    $\mathcal{T}_{bc}\deln{\mathbb{W}^d, \mathbb{W}^d}$ is dense in $\Pi_{bc}\deln{\mathbb{W}^d, \mathbb{W}^d}$ for the topology of weak convergence of probability measures.
\end{prop}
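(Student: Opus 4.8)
The plan is to transfer M. \'Emery's denseness result \cite[Proposition $2$]{emery2005certain} to our setting using the dictionary provided by Theorem \ref{thm:bicausaltptplanbetweenSDEs}. \'Emery works with pairs of Brownian motions carried by a common filtered space: a pair is ``jointly immersed'' if each is a Brownian motion for the common filtration, and ``mutually adapted'' if in addition each is adapted to the filtration generated by the other (equivalently, the pair is a Monge coupling). His Proposition $2$ asserts that mutually adapted pairs are dense, for the topology induced by convergence in law of $(B,\tilde B)$ on $\cW^d\times\cW^d$, in the set of jointly immersed pairs. The first step is therefore to record the identifications: by Theorem \ref{thm:bicausaltptplanbetweenSDEs} applied with $\sigma=\overline\sigma=\mathrm{Id}_d$, $b=\overline b=0$, the set $\Pi_{bc}\deln{\mathbb{W}^d,\mathbb{W}^d}$ is exactly the set of joint laws of jointly immersed Brownian pairs; and by Corollary \ref{corollary:bicausalMongemapbetweenWienermeasures} (together with the remark that a bicausal Monge map from $\bbW^d$ to $\bbW^d$ is precisely a causal map $T$ with $T_{\#}\bbW^d=\bbW^d$, realized as $\int_0^\cdot Q_s\,dX_s$), the set $\cT_{bc}\deln{\mathbb{W}^d,\mathbb{W}^d}$ corresponds to those joint laws for which the second marginal is adapted to the filtration of the first, i.e. to mutually adapted pairs in \'Emery's sense (the ``mutual'' part then follows by symmetry, or one notes that \'Emery's class and the one-sided adapted class have the same closure, which suffices).

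Second, I would check that the notion of convergence matches. The topology in the statement is weak convergence of probability measures on $\cW^d\times\cW^d$; \'Emery's convergence is convergence of the laws of the pairs, which is the same thing once one identifies a jointly immersed pair with its law $\pi\in\Pi\deln{\bbW^d,\bbW^d}$. One should also observe that $\cT_{bc}\deln{\bbW^d,\bbW^d}\subseteq\Pi_{bc}\deln{\bbW^d,\bbW^d}$ (immediate from the definitions, or from Corollary \ref{corollary:bicausalMongemapbetweenWienermeasures}) and that $\Pi_{bc}\deln{\bbW^d,\bbW^d}$ is weakly compact (Remark \ref{remark:causalcpls}), so that the closure of $\cT_{bc}$ stays inside $\Pi_{bc}$; denseness then means $\overline{\cT_{bc}\deln{\bbW^d,\bbW^d}}=\Pi_{bc}\deln{\bbW^d,\bbW^d}$. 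Given a target $\pi\in\Pi_{bc}\deln{\bbW^d,\bbW^d}$, Theorem \ref{thm:bicausaltptplanbetweenSDEs} furnishes a stochastic basis carrying jointly immersed Brownian motions $(B,\tilde B)$ with joint law $\pi$; \'Emery's Proposition $2$ produces a sequence of mutually adapted pairs $(B^n,\tilde B^n)$ whose laws converge to $\pi$; the identification in the first step turns each law into an element of $\cT_{bc}\deln{\bbW^d,\bbW^d}$, and we are done.

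The main obstacle I anticipate is purely one of bookkeeping rather than of substance: making sure that \'Emery's ``mutually adapted'' is genuinely the same class (or has the same closure) as our $\cT_{bc}\deln{\bbW^d,\bbW^d}$. Two points need care. First, \'Emery phrases adaptedness in terms of raw versus completed filtrations, whereas our causality (Definition \ref{def:causaldefinition}) is stated with the $\eta$-completed right-continuous filtration $\del{\cH^{\bbW^d}_t}_{t\in[0,1]}$; since the completion of a Brownian filtration is already right-continuous, these coincide and no information is lost, but this should be stated. Second, our $\cT_{bc}$ is a one-sided-looking object via Corollary \ref{corollary:bicausalMongemapbetweenWienermeasures} (a single rotation-valued integrand $Q$), so one must confirm that a map of the form $\int_0^\cdot Q_s\,dX_s$ with $Q\in\cO^d$ indeed yields a \emph{bi}causal coupling --- which is exactly the content of that corollary --- and conversely that \'Emery's mutually adapted pairs are all of this form; both directions are already packaged in the results cited. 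Once this dictionary is pinned down, the proof is a two-line invocation of \cite[Proposition $2$]{emery2005certain}.
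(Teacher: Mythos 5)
Your plan is correct and follows essentially the same route as the paper: identify $\Pi_{bc}(\bbW^d,\bbW^d)$ with \'Emery's jointly immersed Brownian pairs via Theorem \ref{thm:bicausaltptplanbetweenSDEs}, invoke \cite[Proposition $2$]{emery2005certain} to get approximating mutually adapted pairs, and observe these are Monge couplings. The one small difference is the certification step: where you reach for Corollary \ref{corollary:bicausalMongemapbetweenWienermeasures}, the paper applies Lemma \ref{lemma:bicausalmongemapequivctd} directly (since $Y^n=T^n(X^n)$ is a Brownian motion, $T^n$ is a $\bbW^d$-semimartingale, hence a bicausal Monge map), which is a bit more economical and sidesteps the one-sided-versus-mutual bookkeeping you flag.
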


\begin{proof}
    By \cite[Proposition $2$]{emery2005certain}, given any $\pi \in \Pi_{bc}\deln{\mathbb{W}^d, \mathbb{W}^d}$, there exists Brownian motions $X^n$ and $Y^n$ on some abstract filtered probability space $\deln{\Omega, \mathcal{G}, \del{\mathcal{G}_t}_{t \in \sbr{0,1}}, \mathbb{P}}$ such that for each fixed $n$, $X^n$ and $Y^n$ generate the same filtration and the law of $\del{X^n,Y^n}$ weakly converges  to $\pi$ as $n$ tends to infinity. Since $X^n$ and $Y^n$ generate the same filtration, then there exists some $T^n : \cW^d \rightarrow \cW^d$ such that $T^n\del{X^n} = Y^n$ $\mathbb{P}$-a.s.. Since $Y^n$ is a Brownian motion, it is easy to see that $T^n_{
    \#}\text{Law}\del{Z^n} = T^n_{\#}\bbW^d = \bbW^d$. Lemma \ref{lemma:bicausalmongemapequivctd} gives that $T^n \in \mathcal{T}_{bc}\deln{\mathbb{W}^d, \mathbb{W}^d}$ and that $\text{Law}\del{Z^n,Y^n} = \del{X_\cdot, T^n}_{\#}{\mathbb{W}^d}$ weakly converges to $\pi$.
\end{proof}
\begin{remark}
A longer, but more explicit proof may be obtained along the lines of the proof of \cite[Proposition $2$]{emery2005certain} where $Y^n$ is constructed explicitly as a stochastic integral against $X^n$ of an $\cO^d$-valued process, in agreement with Corollary \ref{corollary:bicausalMongemapbetweenWienermeasures}.  
\end{remark}

In fact, since the Brownian motions constructed  in \cite[Proposition $2$]{emery2005certain} generate the same filtration, we can say more.

\begin{prop}
    \label{prop:bijectivebicausalmongemapdenseinbicausaltptplanswiener}
    $\mathcal{T}_{bcb}\deln{\mathbb{W}^d, \mathbb{W}^d}$ is dense in $\Pi_{bc}\deln{\mathbb{W}^d, \mathbb{W}^d}$ for the topology of weak convergence of probability measures.
\end{prop}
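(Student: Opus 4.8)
The plan is to revisit the proof of Proposition \ref{prop:bicausalmongemapdenseinbicausaltptplanswiener} and extract the extra mileage afforded by the fact that, in \cite[Proposition $2$]{emery2005certain}, the approximating Brownian motions $X^n$ and $Y^n$ are constructed so as to generate the \emph{same} filtration, and not merely so that $Y^n$ is adapted to the filtration of $X^n$. This symmetry is precisely what upgrades the approximating causal Monge maps from one-sided to a.s. bijective.

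Concretely, I would proceed as follows. Given $\pi \in \Pi_{bc}\deln{\bbW^d, \bbW^d}$, apply \cite[Proposition $2$]{emery2005certain} to obtain a stochastic basis $\deln{\Omega, \cG, \del{\cG^n_t}_{t \in \sbr{0,1}}, \bbP}$ carrying $d$-dimensional Brownian motions $X^n$, $Y^n$ which generate the common (completed, right-continuous) filtration $\del{\cG^n_t}_{t \in \sbr{0,1}}$ and are such that $\mathrm{Law}\del{X^n, Y^n} \to \pi$ weakly. Since $Y^n$ is adapted to the natural filtration of $X^n$, there is a non-anticipative $T^n : \cW^d \to \cW^d$ with $T^n\del{X^n} = Y^n$ $\bbP$-a.s.; by the same token, since $X^n$ is adapted to the natural filtration of $Y^n$, there is a non-anticipative $\hat{T}^n : \cW^d \to \cW^d$ with $\hat{T}^n\del{Y^n} = X^n$ $\bbP$-a.s. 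Composing gives $\hat{T}^n \circ T^n\del{X^n} = \hat{T}^n\del{Y^n} = X^n$ and $T^n \circ \hat{T}^n\del{Y^n} = T^n\del{X^n} = Y^n$, both $\bbP$-a.s.; pushing the first identity forward by $X^n$ (of law $\bbW^d$) yields $\hat{T}^n \circ T^n = X_\cdot$ $\bbW^d$-a.s., and pushing the second forward by $Y^n$ (also of law $\bbW^d$) yields $T^n \circ \hat{T}^n = X_\cdot$ $\bbW^d$-a.s., so $T^n$ is a.s. bijective in the sense of the footnote to Definition \ref{def:causaldefinition}, with a.s. inverse $\hat{T}^n$. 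Exactly as in the proof of Proposition \ref{prop:bicausalmongemapdenseinbicausaltptplanswiener}, $T^n$ pushes $\bbW^d$ to $\bbW^d$ and $\del{T^n_t}_{t \in \sbr{0,1}}$ is a $\deln{\bbW^d, \del{\cH^{\bbW^d}_t}_{t \in \sbr{0,1}}}$-semimartingale, so Lemma \ref{lemma:bicausalmongemapequivctd} shows $T^n$ is a bicausal Monge map; hence $\pi_{T^n} := \del{X_\cdot, T^n}_{\#}\bbW^d \in \cT_{bcb}\deln{\bbW^d, \bbW^d}$. Finally $\pi_{T^n} = \mathrm{Law}\del{X^n, Y^n} \to \pi$, proving density.

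I expect the only delicate point to be the measure-theoretic bookkeeping in passing from ``$X^n$ and $Y^n$ generate the same filtration'' on the abstract space to genuine everywhere-defined non-anticipative maps $T^n$, $\hat{T}^n$ whose \emph{composition} is meaningful: one must fix measurable functional representatives and then verify the required identities $\bbW^d$-a.s.\ via pushforward rather than pointwise. This is routine and already implicit in the proof of Proposition \ref{prop:bicausalmongemapdenseinbicausaltptplanswiener}; there is no genuine analytic obstacle once \'Emery's density theorem is available, so the present statement is essentially a refinement of the previous one obtained by keeping track of both adaptedness relations simultaneously.
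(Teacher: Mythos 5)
Your proposal is correct and follows essentially the same route as the paper's own proof: reuse the Brownian motions $X^n$, $Y^n$ from \'Emery's density result, exploit the symmetry of ``generating the same filtration'' to produce $\hat{T}^n$ alongside $T^n$, and push the composed identities forward to conclude a.s. bijectivity.
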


\begin{proof}
    In the proof of Proposition \ref{prop:bicausalmongemapdenseinbicausaltptplanswiener}, since $X^n$ and $Y^n$ generate the same filtration, the role of $X^n$ and $Y^n$ are completely symmetric, i.e. there exists some $\hat{T}^n : \cW^d \rightarrow \cW^d$ such that $\hat{T}^n\del{Y^n} = X^n$ $\mathbb{P}$-a.s. and $\hat{T}^n \in \mathcal{T}_{bc}\deln{\mathbb{W}^d, \mathbb{W}^d}$. It then holds that $X^n = \hat{T}^n \circ T^n\del{X^n}$ $\mathbb{P}$-a.s.. In other words, $\hat{T}^n \circ T^n = X_\cdot$ $\bbW^d$-a.s.. A similar statement holds for $T^n \circ \hat{T}^n$.
\end{proof}

We now establish conditions on $\sigma, \overline{\sigma},b$ and $\overline{b}$ so that $\mathcal{T}_{bc}\deln{\mu^{\sigma,b}_z, \mu^{\overline{\sigma}, \overline{b}}_{\tilde{z}}}$ is dense in $\Pi_{bc}\deln{\mu^{\sigma,b}_z, \mu^{\overline{\sigma}, \overline{b}}_{\tilde{z}}}$.
We associate to each $\alpha \in \mathscr{A}^{d,d'}$ 
the mapping $\Gamma_\alpha : \mathcal{W}^d \rightarrow \deln{\mathbb{R}^{d \times d'}}^{\sbr{0,1}}$ given by $\Gamma_\alpha\del{\omega} := \del{t \mapsto \alpha\del{t, \omega}}$. \begin{assumption}
    \label{assumption:ctsgamma}
    $\Gamma_\alpha$ maps into $C\deln{\sbr{0,1}, \mathbb{R}^{d \times d'}}$ and the mapping $\Gamma_\alpha : \cW^d \rightarrow C\deln{\sbr{0,1}, \bbR^{d \times d'}}$ is continuous with respect to the uniform topologies on both domain and codomain.
\end{assumption} 

The preceding assumption is satisfied when, for e.g., $\alpha\del{t, \omega}$ is continuous in $t$ and Lipschitz continuous in $\omega$, with Lipschitz constant independent of $t$. Any $\alpha$ given by $\alpha\del{t, \omega} := f\del{\omega\del{t}}$ for $f$ continuous would also satisfy the preceding assumptions.

\begin{prop}
    \label{prop:bicausalmongemapdenseinbicausaltptplans}

        Assume the SDEs \eqref{eqn:firstsde} and \eqref{eqn:2ndsde} have strong solutions satisfying  pathwise uniqueness, given by the Ito maps $F^{\sigma,b} : \deln{\mathcal{W}^d, \cF_1} \rightarrow \deln{\mathcal{W}^d, \cF_1}$ (resp. $F^{\overline{\sigma}, \overline{b}})$. If $\sigma, \overline{\sigma} \in \mathscr{A}^{d,d}$ and $b, \overline{b} \in \mathscr{A}^{d,1}$ satisfy Assumption \ref{assumption:ctsgamma} and $\sigma\del{s, \omega}$ is invertible for all $\del{s,\omega}$, $\mathcal{T}_{bc}\deln{\mu^{\sigma,b}_z, \mu^{\overline{\sigma}, \overline{b}}_{\tilde{z}}}$ is dense in $\Pi_{bc}\deln{\mu^{\sigma,b}_z, \mu^{\overline{\sigma}, \overline{b}}_{\tilde{z}}}$ for the topology of weak convergence of probability measures.
\end{prop}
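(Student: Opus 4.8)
The plan is to reduce everything to the Wiener case treated in Proposition~\ref{prop:bicausalmongemapdenseinbicausaltptplanswiener}, transporting the approximating sequence along the It\^o maps. Write $\eta := \mu^{\sigma,b}_z$, $\nu := \mu^{\overline{\sigma},\overline{b}}_{\tilde z}$, $F := F^{\sigma,b}$, $\overline{F} := F^{\overline{\sigma},\overline{b}}$, and fix $\pi \in \Pi_{bc}\del{\eta,\nu}$. By Corollary~\ref{corollary:pushforwardofbicausalwiener} there is $\hat{\pi} \in \Pi_{bc}\del{\bbW^d,\bbW^d}$ with $\pi = \del{F,\overline{F}}_{\#}\hat{\pi}$. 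By Proposition~\ref{prop:bicausalmongemapdenseinbicausaltptplanswiener} choose bicausal Monge maps $T^n : \cW^d \to \cW^d$ with $\hat{\pi}_n := \del{X_\cdot, T^n}_{\#}\bbW^d \to \hat{\pi}$ weakly. Since $\sigma\del{s,\cdot}$ is invertible, Corollary~\ref{corollary:strongsolnsdebicausalmongemap} gives that $S^n := \overline{F}\circ T^n\circ R^{\sigma,b}_z$ is a bicausal Monge map from $\eta$ to $\nu$; moreover, using $\del{R^{\sigma,b}_z}_{\#}\eta = \bbW^d$ and the fact that $F\circ R^{\sigma,b}_z = X_\cdot$ holds $\eta$-a.s.\ by pathwise uniqueness, one checks that $\pi_n := \del{X_\cdot, S^n}_{\#}\eta = \del{F,\overline{F}}_{\#}\hat{\pi}_n$. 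Hence $\pi_n \in \cT_{bc}\del{\eta,\nu}$ for every $n$, and it remains only to prove $\pi_n \to \pi$ weakly.

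For this I would argue by a Skorokhod representation combined with a stability result for SDEs under perturbation of the driving noise. Since $\cW^d\times\cW^d$ is Polish and $\hat{\pi}_n\to\hat{\pi}$ weakly, there is a probability space carrying $\del{\beta_n,\tilde{\beta}_n}\sim\hat{\pi}_n$ and $\del{\beta,\tilde{\beta}}\sim\hat{\pi}$ with $\del{\beta_n,\tilde{\beta}_n}\to\del{\beta,\tilde{\beta}}$ a.s.\ in $\norm{\cdot}_\infty$. Because each $\hat{\pi}_n$, resp.\ $\hat{\pi}$, is a bicausal Wiener coupling, Theorem~\ref{thm:bicausaltWienercouplings} shows $\beta_n,\tilde{\beta}_n$, resp.\ $\beta,\tilde{\beta}$, are Brownian motions with respect to the completed, right-continuous filtration they jointly generate, so on this space one may solve \eqref{eqn:firstsde} and \eqref{eqn:2ndsde} driven by these Brownian motions and obtain $Z^n := F\del{\beta_n}$, $\tilde{Z}^n := \overline{F}\del{\tilde{\beta}_n}$, $Z := F\del{\beta}$, $\tilde{Z} := \overline{F}\del{\tilde{\beta}}$, with $\text{Law}\del{Z^n,\tilde{Z}^n} = \pi_n$ and $\text{Law}\del{Z,\tilde{Z}} = \pi$. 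Under Assumption~\ref{assumption:ctsgamma} and pathwise uniqueness, the a.s.\ uniform convergence $\beta_n\to\beta$ forces $Z^n\to Z$ and $\tilde{Z}^n\to\tilde{Z}$ in probability, uniformly on $\sbr{0,1}$; hence $\del{Z^n,\tilde{Z}^n}\to\del{Z,\tilde{Z}}$ in probability and a fortiori in law, so $\pi_n\to\pi$ weakly, which completes the proof.

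The crux, and the step I expect to demand the most care, is this last stability assertion: with merely continuous (possibly path-dependent) coefficients, $F$ is \emph{not} a continuous map on $\del{\cW^d,\norm{\cdot}_\infty}$, so the convergence $F\del{\beta_n}\to F\del{\beta}$ cannot be read off from a continuous-mapping theorem and must be obtained probabilistically. The standard route is a tightness-plus-uniqueness argument of Yamada--Watanabe / Gy\"ongy--Krylov type: after a localization reducing to bounded coefficients, one shows the laws of the quadruples $\del{\beta_n,\tilde{\beta}_n,Z^n,\tilde{Z}^n}$ are tight on the relevant path space (here the continuity of $b,\sigma,\overline{b},\overline{\sigma}$, together with the standing hypothesis that the It\^o maps are globally defined, supplies the a priori estimates and the continuity of the limiting paths), that every subsequential weak limit is supported on quadruples solving \eqref{eqn:firstsde}--\eqref{eqn:2ndsde} driven by the limiting pair (again using continuity of the coefficients to pass to the limit in the associated martingale problems), and that pathwise uniqueness together with the Yamada--Watanabe theorem pins down every such limit as $\text{Law}\del{\beta,\tilde{\beta},F\del{\beta},\overline{F}\del{\tilde{\beta}}}$; convergence of the $\del{Z^n,\tilde{Z}^n}$-marginals then follows. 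A point worth making explicit is that the two SDEs are solved \emph{simultaneously} on the Skorokhod space relative to the common jointly generated filtration: this is precisely what guarantees that the limiting joint law is $\pi$ itself, rather than merely some coupling of $\eta$ and $\nu$, and it is here that the bicausality of $\hat{\pi}$ re-enters through Theorem~\ref{thm:bicausaltWienercouplings}.
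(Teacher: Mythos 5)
Your overall strategy matches the paper's exactly: reduce to Wiener via Corollaries~\ref{corollary:pushforwardofbicausalwiener} and \ref{corollary:strongsolnsdebicausalmongemap}, approximate $\hat\pi$ in $\cT_{bc}(\bbW^d,\bbW^d)$ using Proposition~\ref{prop:bicausalmongemapdenseinbicausaltptplanswiener}, push forward by the It\^o maps, and close with a Skorokhod representation plus an SDE stability argument, and you correctly identify non-continuity of $F^{\sigma,b}$ as the crux. The differences are in packaging: the paper's proof sets up a probability $\gamma_n$ on a four-fold product carrying both noise and solution paths, Skorokhods the quadruple (so a.s.\ convergence of all four components is free), and only needs a weakly convergent subsequence of $\gamma_n$; you Skorokhod only the noise pair $(\beta_n,\tilde\beta_n)$, define $Z^n:=F(\beta_n)$ on that space, and aim for the stronger conclusion $Z^n\to Z$ in probability via G\"yongy--Krylov. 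Both work, but let me flag two places where your outline is loose. First, tightness of $\mathrm{Law}(\beta_n,\tilde\beta_n,Z^n,\tilde Z^n)$ has nothing to do with Assumption~\ref{assumption:ctsgamma} or a priori estimates: each of the four marginal laws is fixed ($\bbW^d,\bbW^d,\eta,\nu$), so the joint laws are automatically tight; Assumption~\ref{assumption:ctsgamma} is used later, and only to pass to the limit in the drift integral (via continuity of $\Gamma_\beta$) and in the stochastic integral (via \cite[Theorem~2.2]{kurtz1991weak}). Second, and more importantly, the step ``pathwise uniqueness / Yamada--Watanabe pins down the limit'' presupposes that, along any subsequential limit of the quadruple, the limiting $\beta$ is still a Brownian motion and $Z'$ still a semimartingale with respect to the filtration \emph{jointly generated} by the limit quadruple; without this the limiting pair $(\beta,Z')$ need not be a weak solution to which pathwise uniqueness applies. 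The paper obtains this by observing that $(X_1,X_3)_\#\gamma$ is a limit of elements of the compact (hence closed) set $\Pi_{bc}(\eta,\bbW^d)$, and then invoking Theorem~\ref{thm:equivctdscausality}. Your appeal to Theorem~\ref{thm:bicausaltWienercouplings} correctly justifies solving the SDEs for each fixed $n$ and for the target $\hat\pi$, but does not by itself deliver the filtration compatibility of the subsequential limit; you should instead invoke compactness of $\Pi_{bc}$ (Remark~\ref{remark:causalcpls}) together with Theorem~\ref{thm:equivctdscausality} at that point, as the paper does.
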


\begin{proof}
   Let $\eta := \mu^{\sigma,b}_z$ and $\nu := \mu^{\overline{\sigma}, \overline{b}}_{\tilde{z}}$ and $\pi \in \Pi_{bc}\del{\eta,\nu}$. 
    Then, $\pi = \deln{F^{\sigma,b}, F^{\overline{\sigma}, \overline{b}}}_{\#}\hat{\pi}$ for some $\hat{\pi} \in \Pi_{bc}\deln{\mathbb{W}^d, \mathbb{W}^d}$ by Corollary \ref{corollary:pushforwardofbicausalwiener}. By Proposition \ref{prop:bicausalmongemapdenseinbicausaltptplanswiener}, there exists a sequence $\del{\hat{\pi}_n}_{n \geq 0} \subseteq \mathcal{T}_{bc}\deln{\mathbb{W}^d, \mathbb{W}^d}$ such that $\hat{\pi}_n = \deln{X_\cdot, \hat{T}_n}_{\#}\mathbb{W}^d$ for some $\hat{T}_n : \cW^d \rightarrow \cW^d$ and $\hat{\pi}_n \rightarrow \hat{\pi}$ as $n \rightarrow \infty$. Letting $E_i := \cW^d$ for $i = 1,\dots,4$ and $X_i : \bigtimes_{j = 1}^4 E_j \rightarrow E_i, X_i\del{x_1, \dots, x_4} := x_i$ denote the $i^{th}$-marginal process, we consider the probability measure on $\bigtimes_{i = 1}^4 E_i$ given by \begin{equation}
    \label{eqn:gammanconstruction}
    \gamma_n := \del{X_\cdot, F^{\overline{\sigma}, \overline{b}} \circ \hat{T}_n \circ R^{\sigma,b}_z, R^{\sigma,b}_z, \hat{T}_n \circ R^{\sigma,b}_z}_{\#}\eta \in \mathcal{P}\del{E_1 \times \dots \times E_4}.
    \end{equation}
     Then, the sequence $\del{\gamma_n}_{n \geq 0}$ is tight. Indeed, one checks that for each $n \geq 0$, $\pi_n := \del{X_1,X_2}_{\#}\gamma_n \in \mathcal{T}_{bc}\del{\eta,\nu} \subseteq  \Pi_{bc}\del{\eta, \nu}$ by Corollary \ref{corollary:strongsolnsdebicausalmongemap} and $\del{X_3,X_4}_{\#}\gamma_n = \hat{\pi}_n \in \Pi_{bc}\deln{\mathbb{W}^d, \mathbb{W}^d}$. Recall from Remark \ref{remark:causalcpls} that $\Pi_{bc}\del{\eta, \nu}$ and $\Pi_{bc}\deln{\mathbb{W}^d, \mathbb{W}^d}$ are compact (hence, tight by Prokohorov's theorem).
    We conclude that $\del{\gamma_n}_{n \geq 0}$ is a sequence of probability measures whose marginals lie in tight subsets of $\mathcal{P}\deln{\cW^d \times \cW^d}$ and hence, is tight itself (see \cite[Lemma $4.4$]{villani2008optimal}). By Prokhorov's theorem, we can assume, that up to a subsequence, $\gamma_n$ converges to some $\gamma$ as $n \rightarrow \infty$ such that $\del{X_1, X_2}_{\#}\gamma \in \Pi_{bc}\del{\eta, \nu}$ and $\del{X_3,X_4}_{\#}\gamma \in \Pi_{bc}\deln{\mathbb{W}^d, \mathbb{W}^d}$. In fact, since $\del{X_3,X_4}_{\#}\gamma_n = \hat{\pi}_n$, then $\del{X_3,X_4}_{\#}\gamma = \hat{\pi}$ by uniqueness of limits for weak convergence of probability measures. Since $\pi_n$ lies in $\mathcal{T}_{bc}\del{\eta,\nu}$ and converges to $\del{X_1,X_2}_{\#}\gamma$, it suffices now to show that \begin{equation}
        \label{eqn:desiredbicausaltptplan}
        \del{X_1,X_2}_{\#}\gamma = \deln{F^{\sigma,b}, F^{\overline{\sigma}, \overline{b}}}_{\#}\hat{\pi} = \pi. 
    \end{equation} 
    
     For each $n \geq 0$, we show $\gamma_n = \deln{\deln{F^{\sigma,b}, F^{\overline{\sigma}, \overline{b}}}, \text{Id}_{E_3 \times E_4}}_{\#}\hat{\pi}_n$, where $\text{Id}_{E_3 \times E_4}$ is the identity map on $E_3 \times E_4$. 
    Since $\del{X_3,X_4}_{\#}\gamma_n = \hat{\pi}_n$, it suffices to verify that on $\del{E_1 \times \dots \times E_4,\gamma_n}$, $X_1 = F^{\sigma,b}\del{X_3}$ and $X_2 = F^{\overline{\sigma},\overline{b}}\del{X_4}$ $\gamma_n$-a.s.. By definition of $\gamma_n$, it is easy to see that $X_2 = F^{\overline{\sigma}, \overline{b}}\del{X_4}$ $\gamma_n$-a.s.. Hence, we only focus on $\del{X_1,X_3}$. Consider $\kappa_n := \del{X_1,X_3}_{\#}\gamma_n = \del{X_\cdot, R^{\sigma,b}_z}_{\#}\eta$. 
    Since $X_1 = F^{\sigma,b} \deln{R^{\sigma,b}_z\deln{X_1}}$ $\eta$-a.s. by pathwise uniqueness, it follows that on $E_1 \times E_3$, $X_1 = F^{\sigma,b}_x\del{X_3}$ $\kappa_n$-a.s. and hence, $\gamma_n$-a.s. on $E_1 \times \dots \times E_4$.  

    To show that \eqref{eqn:desiredbicausaltptplan} holds, it suffices to prove that on $\del{E_1 \times \dots \times E_4,\gamma}$, $X_1 = F^{\sigma,b}\del{X_3}$ and $X_2 = F^{\overline{\sigma}, \overline{b}}\del{X_4}$ holds $\gamma$-a.s. since we know $\del{X_3,X_4}_{\#}\gamma = \hat{\pi}$. We only prove that $X_1 = F^{\sigma,b}_x\del{X_3}$ $\gamma$-a.s. since the analogous equality between $X_2$ and $X_4$ is proven in the exact same manner. Since $\gamma_n \rightarrow \gamma$ as $n \rightarrow \infty$, by Skorokhod's representation theorem (see for e.g. \cite[Theorem $2.7$]{ikeda2014stochastic}), there exists a probability space $\del{\Omega, \mathcal{G}, \mathbb{P}}$ with a sequence of $\bigtimes_{i = 1}^4 E_i$-valued random variables $S^n = \del{S^n_1, \dots, S^n_4}, n = 0, 1, \dots, \infty$ such that $S^n_{\#}\mathbb{P} = \gamma_n$ for $n = 0, 1, \dots$, $S^\infty_{\#}\mathbb{P} = \gamma$ and $S^n$ converges uniformly to $S^\infty$ $\mathbb{P}$-a.s..
    
    Consider for each $n \geq 0$, the filtration generated by $S^n_3$, denoted $\deln{\widehat{\cF}_t}_{t \in \sbr{0,1}}$. Since $S^n_{\#}\mathbb{P} = \gamma_n$, then $\del{S^n_1, S^n_3}_{\#}\mathbb{P} = \deln{F^{\sigma,b}_x, \text{Id}_{E_3}}_{\#}\bbW^d$. In other words, $S^n_3$ is a $\deln{\widehat{\cF}_t}_{t \in \sbr{0,1}}$-Brownian motion and $S^n_1$ is an $\deln{\widehat{\cF}_t}_{t \in \sbr{0,1}}$-adapted process such that $\mathbb{P}$-a.s. \begin{equation}
        \label{eqn:sdeforSn1andSn3}
        \del{S^n_1}_t = x + \int_0^t b\del{s,S^n_1}ds + \int_0^t \sigma\del{s,S^n_1} d\del{S^n_3}_s, \; \forall t \in \sbr{0,1}.
    \end{equation}
     We now want to take appropriate limits in the preceding equality as $n \rightarrow \infty$. Since $S^n$ uniformly converges to $S^\infty$, then $S^n_1$ converges uniformly to $S^\infty_1$. By Assumption \ref{assumption:ctsgamma} on $b$, one easily verifies that $\beta : \sbr{0,1} \times \mathcal{W}^d \rightarrow \mathbb{R}^d$ given by $\beta\del{t,\omega} := \int_0^t b\del{s,\omega} ds$ also lies in $\mathscr{A}^{d,1}$ and satisfies Assumption \ref{assumption:ctsgamma} again. Hence, $\Gamma_\beta$ is continuous from $\mathcal{W}^d$ to $\mathcal{W}^d$ and $\mathbb{P}$-a.s. $\Gamma_\beta\del{S^n_1} = \int_0^\cdot b\del{s,S^n_1} ds$ converges uniformly to $\Gamma_\beta\del{S^\infty_1} = \int_0^\cdot b\del{s,S^\infty_1} ds$. For the stochastic integral term, we will argue that \begin{equation}
        \label{eqn:cvgofstochasticintegrals}
    \int_0^\cdot \sigma\del{s,S^n_1} d\del{S^n_3}_s \rightarrow \int_0^\cdot \sigma\del{s,S^\infty_1} d\del{S^\infty_3}_s \text{ in probability }
    \end{equation}
     by verifying the conditions of \cite[Theorem $2.2$]{kurtz1991weak}. By Assumption \ref{assumption:ctsgamma} on $\sigma$, one easily verifies that $\sigma\del{\cdot, S^n_1}$ converges uniformly to $\sigma\del{\cdot, S^\infty_1}$ as $n \rightarrow \infty$ $\mathbb{P}$-a.s.. For each $n \geq 0$, we consider the pair $\deln{\sigma\del{\cdot,S^n_1}, S^n_3}$ which are $\deln{\widehat{\cF}_t}_{t \in \sbr{0,1}}$-adapted processes such that $\del{S^n_1, S^n_3}$ converges uniformly to $\del{S^\infty_1, S^\infty_3}$ $\mathbb{P}$-a.s. and hence, in probability. 
     Furthermore, since $\bbP$-a.s. $\sbr{S^n_3}_t = t$ is deterministic, \cite[$C2.2\del{i}$]{kurtz1991weak} is trivially satisfied. This verifies the assumptions of \cite[Theorem $2.2$]{kurtz1991weak} for $\delta = \infty$ and we conclude that \eqref{eqn:cvgofstochasticintegrals} holds. Taking limit in probability in \eqref{eqn:sdeforSn1andSn3}, we have the equality $\mathbb{P}$-a.s. \[
     \del{S^\infty_1}_t = x + \int_0^t b\del{s,S^\infty_1}ds + \int_0^t \sigma\del{s, S^\infty_1} d\del{S^\infty_3}_s \; \forall t \in \sbr{0,1}.
     \] The final step is to show that both $S^\infty_1$ and $S^\infty_3$ are semimartingales with respect to $\bbP$ and the filtration jointly generated by $S^\infty_1$ and $S^\infty_3$ so that we can conclude $S^\infty_1 = F^{\sigma,b}\del{S^\infty_3}$ $\mathbb{P}$-a.s. (and, hence $X_1 = F^{\sigma,b}\del{X_3}$ $\gamma$-a.s.) by pathwise uniqueness
     . Note that for each $n \geq 0$, $\del{X_1,X_3}_{\#}\gamma_n = \deln{F^{\sigma,b}, X_\cdot}_{\#}\mathbb{W}^d \in \mathcal{T}_{bc}\deln{\eta, \mathbb{W}^d} \subseteq \Pi_{bc}\deln{\eta, \mathbb{W}^d}$. Since $\Pi_{bc}\deln{\eta, \mathbb{W}^d}$ is compact and hence, closed, $\del{X_1,X_3}_{\#}\gamma$ is the limit of probability measures in $\Pi_{bc}\del{\eta, \mathbb{W}^d}$, and hence, also lies in $\Pi_{bc}\deln{\eta, \mathbb{W}^d}$. That is, $\del{S^\infty_1, S^\infty_3}_{\#}\mathbb{P} \in \Pi_{bc}\deln{\eta, \mathbb{W}^d}$. By Theorem \ref{thm:equivctdscausality}, it is straightforward to verify that bicausality implies the desired, that is, $S^\infty_1$ and $S^\infty_3$ are semimartingales with respect to $\bbP$ and the filtration jointly generated by $S^\infty_i, i = 1,2$.

\end{proof}

\begin{prop}
    \label{prop:bijectivebicausalmongemapdenseinbicausaltptplans}
     Under the assumptions of Proposition \ref{prop:bicausalmongemapdenseinbicausaltptplans}, if $\overline{\sigma}\del{s, \omega}$ is invertible for all $\del{s,\omega}$, then $\mathcal{T}_{bcb}\deln{\mu^{\sigma,b}_z, \mu^{\overline{\sigma}, \overline{b}}_{\tilde{z}}}$ is dense in $\Pi_{bc}\deln{\mu^{\sigma,b}_z, \mu^{\overline{\sigma}, \overline{b}}_{\tilde{z}}}$ for the topology of weak convergence of probability measures.
\end{prop}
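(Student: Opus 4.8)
The plan is to run the proof of Proposition~\ref{prop:bicausalmongemapdenseinbicausaltptplans} essentially verbatim, with the single change of invoking Proposition~\ref{prop:bijectivebicausalmongemapdenseinbicausaltptplanswiener} in place of Proposition~\ref{prop:bicausalmongemapdenseinbicausaltptplanswiener}, and then to verify the one new point: that the approximating couplings produced along the way are induced by \emph{a.s. bijective} bicausal Monge maps. Write $\eta := \mu^{\sigma,b}_z$ and $\nu := \mu^{\overline{\sigma},\overline{b}}_{\tilde z}$. The starting remark is that, since now \emph{both} $\sigma$ and $\overline{\sigma}$ are invertible, both Ito maps are a.s. bijective: exactly as noted in the proof of Corollary~\ref{corollary:bicausaltptplansinducedbybicausalmongemapssdes}, one has $\bbW^d$-a.s. $R^{\sigma,b}_z\circ F^{\sigma,b} = X_\cdot$ and $\eta$-a.s. $F^{\sigma,b}\circ R^{\sigma,b}_z = X_\cdot$, and the same identities with $(\overline{\sigma},\overline{b},\tilde z,\nu)$ in place of $(\sigma,b,z,\eta)$, where $R^{\overline{\sigma},\overline{b}}_{\tilde z}$ is the analogue of the map $R^{\sigma,b}_z$ of Corollary~\ref{corollary:strongsolnsdebicausalmongemap}, built from $\overline{\sigma}^{-1}$.

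First I would fix $\pi\in\Pi_{bc}(\eta,\nu)$ and, by Corollary~\ref{corollary:pushforwardofbicausalwiener}, write $\pi = (F^{\sigma,b},F^{\overline{\sigma},\overline{b}})_{\#}\hat{\pi}$ with $\hat{\pi}\in\Pi_{bc}(\bbW^d,\bbW^d)$. Proposition~\ref{prop:bijectivebicausalmongemapdenseinbicausaltptplanswiener} furnishes a sequence of a.s. bijective bicausal Monge maps $\hat{T}_n:\cW^d\to\cW^d$ with $\hat{\pi}_n := (X_\cdot,\hat{T}_n)_{\#}\bbW^d\to\hat{\pi}$; let $\hat{S}_n$ be an a.s. inverse of $\hat{T}_n$, which exists by definition of $\cT_{bcb}(\bbW^d,\bbW^d)$ and is again a bicausal Monge map by the symmetry of $X^n$ and $Y^n$ exploited in the proof of Proposition~\ref{prop:bijectivebicausalmongemapdenseinbicausaltptplanswiener}. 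Put
\[
T_n := F^{\overline{\sigma},\overline{b}}\circ\hat{T}_n\circ R^{\sigma,b}_z,\qquad S_n := F^{\sigma,b}\circ\hat{S}_n\circ R^{\overline{\sigma},\overline{b}}_{\tilde z},\qquad \pi_n := (X_\cdot,T_n)_{\#}\eta.
\]
The tightness, Skorokhod-representation and stochastic-integral-convergence argument in the proof of Proposition~\ref{prop:bicausalmongemapdenseinbicausaltptplans} then applies without any change --- it used Proposition~\ref{prop:bicausalmongemapdenseinbicausaltptplanswiener} only to obtain the maps $\hat{T}_n$, which we now obtain from Proposition~\ref{prop:bijectivebicausalmongemapdenseinbicausaltptplanswiener} --- and yields $\pi_n\in\cT_{bc}(\eta,\nu)$ together with $\pi_n\to\pi$.

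It remains only to upgrade $\cT_{bc}$ to $\cT_{bcb}$, i.e. to check that $T_n$ is a.s. bijective with inverse $S_n$: $\eta$-a.s. $S_n\circ T_n = X_\cdot$ and $\nu$-a.s. $T_n\circ S_n = X_\cdot$. I would do this by peeling the composition
\[
S_n\circ T_n = F^{\sigma,b}\circ\hat{S}_n\circ\bigl(R^{\overline{\sigma},\overline{b}}_{\tilde z}\circ F^{\overline{\sigma},\overline{b}}\bigr)\circ\hat{T}_n\circ R^{\sigma,b}_z
\]
from the inside out: $R^{\sigma,b}_z$ pushes $\eta$ onto $\bbW^d$; $\hat{T}_n$ preserves $\bbW^d$; the parenthesised map equals $X_\cdot$ $\bbW^d$-a.s.; $\hat{S}_n\circ\hat{T}_n = X_\cdot$ holds $\bbW^d$-a.s.; and $F^{\sigma,b}\circ R^{\sigma,b}_z = X_\cdot$ holds $\eta$-a.s. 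Since at each stage the current law is exactly the one for which the next identity holds almost surely, no exceptional null set is enlarged, so $S_n\circ T_n = X_\cdot$ $\eta$-a.s.; the reverse composition $T_n\circ S_n = X_\cdot$ $\nu$-a.s. is symmetric. Hence $\pi_n\in\cT_{bcb}(\eta,\nu)$, and since $\pi\in\Pi_{bc}(\eta,\nu)$ was arbitrary and $\pi_n\to\pi$, $\cT_{bcb}(\eta,\nu)$ is dense in $\Pi_{bc}(\eta,\nu)$. I expect the whole difficulty to be concentrated in this last null-set bookkeeping for the composed inverse; everything else is inherited from Proposition~\ref{prop:bicausalmongemapdenseinbicausaltptplans}.
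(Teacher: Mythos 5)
Your proof is correct and follows essentially the same route as the paper's: repeat the proof of Proposition~\ref{prop:bicausalmongemapdenseinbicausaltptplans} with Proposition~\ref{prop:bijectivebicausalmongemapdenseinbicausaltptplanswiener} supplying a.s.\ bijective $\hat T_n$, and then verify that $T_n=F^{\overline\sigma,\overline b}\circ\hat T_n\circ R^{\sigma,b}_z$ has a.s.\ inverse $S_n=F^{\sigma,b}\circ\hat S_n\circ R^{\overline\sigma,\overline b}_{\tilde z}$. The paper labels the composition check "straightforward"; your inside-out peeling, tracking that each intermediate pushforward law matches the null set of the next a.s.\ identity, is exactly the bookkeeping that makes it so, and correctly uses measure-preservation of $\hat T_n$, $R^{\sigma,b}_z$, etc.\ to ensure no null set is enlarged.
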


\begin{proof}
    We repeat the proof of Proposition \ref{prop:bicausalmongemapdenseinbicausaltptplans} with the use of Proposition \ref{prop:bicausalmongemapdenseinbicausaltptplanswiener} replaced by Proposition \ref{prop:bijectivebicausalmongemapdenseinbicausaltptplanswiener}. We only need to check that if $\hat{T} \in \cT_{bcb}\deln{\bbW^d, \bbW^d}$, then $T := F^{\overline{\sigma},\overline{b}} \circ \hat{T} \circ R^{\sigma,b}_z \in \cT_{bcb}\deln{\mu^{\sigma,b}_z, \mu^{\overline{\sigma}, \overline{b}}_{\tilde{z}}}$. Let $\hat{S} : \cW^d \rightarrow \cW^d$ be such that $\hat{S} \in \cT_{bc}\deln{\bbW^d, \bbW^d}$ and $\bbW^d$-a.s. $\hat{T} \circ \hat{S} = \hat{S} \circ \hat{T} = X_\cdot$. It is straightforward to verify that $S := F^{\sigma,b} \circ \hat{S} \circ R^{\overline{\sigma}, \overline{b}}_{\tilde{z}}$ satisfies $T \circ S = X_\cdot$ $\mu^{\overline{\sigma}, \overline{b}}_{\tilde{z}}$-a.s. and $S \circ T = X_\cdot$ $\mu^{\sigma,b}_z$-a.s. as required.
\end{proof}

\subsection{Equivalence of bicausal Monge and Kantorovich optimal transport}
 Given some cost $c : \cW^d \times \cW^d \rightarrow \mathbb{R}_{\geq 0} \cup \cbr{+\infty}$, we define  \begin{enumerate}
    \item the bicausal Kantorovich problem: \begin{equation}
    \label{eqn:bckp}
    \tag{BKP}
    K\del{\eta,\nu} := \inf_{\pi \in \Pi_{bc}\del{\eta,\nu}} \int_{\cW^d \times \cW^d} c\deln{\omega, \omega'} d\pi\deln{\omega, \omega'} = \inf_{\pi \in \Pi_{bc}\del{\eta,\nu}} \mathbb{E}^\pi\del{c},
    \end{equation}
    \item the bicausal Monge problem: \begin{equation}
    \label{eqn:bcmp}
    \tag{BMP}
    M\del{\eta,\nu} := \inf_{\pi \in \mathcal{T}_{bc}\del{\eta,\nu}} \int_{\cW^d \times \cW^d} c\deln{\omega,\omega'} d\pi\deln{\omega,\omega'} = \inf_{\pi \in \mathcal{T}_{bc}\del{\eta,\nu}} \mathbb{E}^\pi\del{c}.
    \end{equation}
\end{enumerate}

Since $\mathcal{T}_{bc}\del{\eta,\nu} \subseteq \Pi_{bc}\del{\eta,\nu}$, we have $K\del{\eta,\nu} \leq M\del{\eta,\nu}.$ We can derive sufficient conditions for equality in light of Proposition \ref{prop:bicausalmongemapdenseinbicausaltptplans}:

\begin{corollary}
    \label{corollary:equalityofbckpandbcmp}
   Under the assumptions of Proposition \ref{prop:bicausalmongemapdenseinbicausaltptplans}, if $c : \cW^d \times \cW^d \rightarrow \mathbb{R}_{\geq 0}$ is continuous and $\envert{c\deln{\omega,\omega'}} \leq \envert{\phi\del{\omega}} + \envert{\psi\deln{\omega'}}$ for some $\phi \in L^1\del{\eta,\cF_1}$ and $\psi \in L^1\del{\nu,\cF_1}$, then $K\del{\eta,\nu} = M\del{\eta,\nu}$.
\end{corollary}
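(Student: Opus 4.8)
The strategy is standard for transferring a density result to the level of optimal transport values: use Proposition~\ref{prop:bicausalmongemapdenseinbicausaltptplans} to approximate any bicausal coupling by bicausal Monge couplings, then pass to the limit in the cost functional using continuity of $c$ plus a uniform integrability argument coming from the domination hypothesis. The inequality $K(\eta,\nu)\le M(\eta,\nu)$ is immediate from $\cT_{bc}(\eta,\nu)\subseteq \Pi_{bc}(\eta,\nu)$, so all the work goes into the reverse inequality $M(\eta,\nu)\le K(\eta,\nu)$.

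\textbf{Main steps.} First I would fix $\pi\in\Pi_{bc}(\eta,\nu)$ achieving (or nearly achieving) the infimum in \eqref{eqn:bckp}; by Proposition~\ref{prop:bicausalmongemapdenseinbicausaltptplans} pick a sequence $\pi_n\in\cT_{bc}(\eta,\nu)$ with $\pi_n\to\pi$ weakly. Since each $\pi_n\in\cT_{bc}(\eta,\nu)\subseteq\Pi_{bc}(\eta,\nu)$, we have $\bbE^{\pi_n}(c)\ge M(\eta,\nu)$ for all $n$, so it suffices to show $\liminf_n \bbE^{\pi_n}(c)\le\bbE^{\pi}(c)$, or even just $\limsup_n\bbE^{\pi_n}(c)\le\bbE^\pi(c)$. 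Because $c\ge 0$ is continuous and $\pi_n\to\pi$ weakly, the Portmanteau theorem gives $\liminf_n\bbE^{\pi_n}(c)\ge\bbE^\pi(c)$ automatically, which is the wrong direction; so the real content is an upper bound. Here the domination $c(\omega,\omega')\le|\phi(\omega)|+|\psi(\omega')|$ enters: since every $\pi_n$ has first marginal $\eta$ and second marginal $\nu$, the functions $(\omega,\omega')\mapsto|\phi(\omega)|+|\psi(\omega')|$ all have the \emph{same} $\pi_n$-integral, namely $\bbE^\eta|\phi|+\bbE^\nu|\psi|<\infty$, independent of $n$. Combined with $0\le c\le|\phi|+|\psi|$ this yields a uniform integrability bound for $\{c\}$ under $\{\pi_n\}$.

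To make the limit rigorous I would invoke a continuous-mapping/uniform-integrability convergence theorem (e.g. the version of the dominated convergence theorem for weakly convergent measures, sometimes stated as: if $\mu_n\to\mu$ weakly, $g$ continuous, and $|g|\le h_n$ with $h_n$ continuous, $h_n\to h$ pointwise, $\int h_n\,d\mu_n\to\int h\,d\mu<\infty$, then $\int g\,d\mu_n\to\int g\,d\mu$). In our setting $h_n\equiv h(\omega,\omega')=|\phi(\omega)|+|\psi(\omega')|$ is constant in $n$ but need not itself be continuous; the cleanest route is to first reduce to bounded continuous $\phi,\psi$ by monotone approximation, or alternatively to split $c = c\wedge K + (c-K)^+$ and note $\bbE^{\pi_n}\big((c-K)^+\big)\le \bbE^{\pi_n}\big((|\phi|+|\psi|-K)^+\big)\to \bbE^{\pi}\big((|\phi|+|\psi|-K)^+\big)$ (by weak convergence of the marginals, which are fixed at $\eta\otimes$-style integrands) which tends to $0$ as $K\to\infty$ uniformly in $n$. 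Meanwhile $\bbE^{\pi_n}(c\wedge K)\to\bbE^\pi(c\wedge K)$ since $c\wedge K$ is bounded continuous. Letting $n\to\infty$ then $K\to\infty$ gives $\bbE^{\pi_n}(c)\to\bbE^\pi(c)$, hence $M(\eta,\nu)\le\bbE^\pi(c)$; taking the infimum over $\pi\in\Pi_{bc}(\eta,\nu)$ gives $M(\eta,\nu)\le K(\eta,\nu)$, and combined with the trivial inequality we get equality.

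\textbf{Main obstacle.} The density result (Proposition~\ref{prop:bicausalmongemapdenseinbicausaltptplans}) does the heavy lifting, so the only genuine difficulty here is handling the tails of the cost: $c$ is merely continuous and possibly unbounded, so weak convergence alone does not control $\bbE^{\pi_n}(c)$. The key observation that rescues the argument is that the dominating function $|\phi(\omega)|+|\psi(\omega')|$ has marginals, so its integral against $\pi_n$ does not depend on $n$; this converts the problem into a uniform-integrability statement that is insensitive to the coupling and only sees the fixed marginals $\eta,\nu$. Care is needed in justifying $\bbE^{\pi_n}\big((|\phi|+|\psi|-K)^+\big)\to\bbE^{\pi}\big((|\phi|+|\psi|-K)^+\big)$ since $(|\phi|+|\psi|-K)^+$ is not continuous unless $\phi,\psi$ are; the standard fix is to approximate $\phi,\psi$ from below by continuous bounded functions and use that $L^1$-convergence of these approximations is uniform in $n$ because all the $\pi_n$ share the same marginals.
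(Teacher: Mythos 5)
Your proposal is correct and follows essentially the same route as the paper, which explicitly says the proof is an adaptation of Nutz's Lemma~5.14 (truncate the cost, pass to the weak limit on the bounded part, and control the tail by uniform integrability coming from the fixed marginals). One small inefficiency in your ``main obstacle'' paragraph: you worry that $(|\phi|+|\psi|-K)^+$ is not continuous and propose approximating $\phi,\psi$ from below by continuous functions so as to pass to the limit $\bbE^{\pi_n}\bigl((|\phi|+|\psi|-K)^+\bigr)\to \bbE^{\pi}\bigl((|\phi|+|\psi|-K)^+\bigr)$. That limit is never needed. Since $0\le c\le|\phi|+|\psi|$ you have $(c-K)^+\le(|\phi|-K/2)^+ + (|\psi|-K/2)^+$, and because every $\pi_n$ (and $\pi$) has marginals $\eta,\nu$,
\[
\bbE^{\pi_n}\bigl((c-K)^+\bigr)\le \bbE^{\eta}\bigl((|\phi|-K/2)^+\bigr)+\bbE^{\nu}\bigl((|\psi|-K/2)^+\bigr)=:\varepsilon(K),
\]
which is independent of $n$ and tends to $0$ as $K\to\infty$ by dominated convergence in $L^1(\eta)$ and $L^1(\nu)$. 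Continuity is only needed for the bounded piece $c\wedge K$, which it has. So the argument goes through without any regularization of $\phi,\psi$.
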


This proof of the preceding corollary is an adaptation of that of \cite[Lemma $5.14$]{nutz2022introduction} with minor changes, and is, hence, omitted.

\subsection{Examples}

We now derive some explicit solutions to \eqref{eqn:bckp}-\eqref{eqn:bcmp} in the case where one of the probability measures is the law of an SDE which has invertible diffusion coefficient and the other is the law of an SDE which has a deterministic drift coefficient and almost deterministic diffusion coefficient in a sense to be made precise later. In particular, we highlight how one cannot expect uniqueness of the optimal bicausal transport.

When $\eta$ and $\nu$ are weak solutions of SDEs,   for any $\pi \in \Pi_{bc}\del{\eta,\nu}$, marginal processes $\del{X_t}_{t \in \sbr{0,1}}$ and $\del{Y_t}_{t \in \sbr{0,1}}$ are $\deln{\pi, \deln{\deln{\cH \otimes \cH}^\pi}_{t+ \in \sbr{0,1}}}$-semimartingales by Theorem \ref{thm:equivctdscausality}. Inspired by \cite[Proposition $3.3$]{adaptedwassersteindistancesbackhoff}, we will consider costs that have separate dependence on the martingale and finite variation parts of $X$ and $Y$. Writing 
$$  X=M^\eta+ V^\eta, \quad Y=M^\nu+ V^\nu $$   the semimartingale decomposition of $X$ (resp. $Y$) into its martingale and finite variation parts  under   $\eta$ (resp. $\nu$) we consider a transport cost $c$  given by \begin{equation}
c\del{X, Y} := h\del{V^\eta - V^\nu} + g\deln{\Tr\del{\tbr{M^\eta - M^\nu}_1}},   \label{eq.separablecost} 
\end{equation}
for some measurable $h : \cW^d \rightarrow \mathbb{R}_{\geq 0} \cup \cbr{+\infty}$ and  monotone $g : \mathbb{R} \rightarrow \mathbb{R}_{\geq 0} \cup \cbr{+\infty}$.  
\begin{prop}
    \label{prop:solntobicausalmongeegs}
Let $\sigma \in \mathscr{A}^{d,d}$, $b \in \mathscr{A}^{d,1}$ be such that the SDE \begin{equation}
    \label{eqn:sdewithnondegendiff}
    dX_t = b\del{t,X} dt + \sigma\del{t,X} dB_t, \; X_0 = 0,    
    \end{equation}
has a unique weak solution $\eta := \mu^{\sigma,b}_0$, where  $\sigma\del{t,\omega}$ is invertible for all $\del{t,\omega}$. 
    
 Let $\overline{\sigma} \in \mathscr{A}^{d,d}$ be such that $\overline{\sigma}(t,\omega)=\kappa(t)u(t,\omega)$ with $u(t,\omega)\in {\cal O}^d$ and assume the SDE 
  \begin{equation}
    \label{eqn:sdewithdetqv}
dY_t = \overline{b}\del{t} dt + \overline{\sigma}\del{t,Y} dB_t, \; Y_0 = 0,        
    \end{equation}
 has a unique, strong solution with law $\nu := \mu^{\overline{\sigma}, \overline{b}}_0$. Denote the corresponding  It\^o  map   by $F^{\overline{\sigma}, \overline{b}}_0 : \cW^d \rightarrow \cW^d$. 
 
Then an optimal transport from $\eta$ to $\nu$ for a cost of the form \eqref{eq.separablecost}  is given by the bicausal Monge map 
    \[
    \cW^d \ni \omega \mapsto T\del{\omega} := F^{ \sqrt{\overline{\sigma} \overline{\sigma}^*},\overline{b}}_0\del{\int_0^\cdot Q_s \sigma^{-1}\del{s,X} dM^\eta\del{s} }
    \] 
    where $Q : \sbr{0,1} \times \cW^d \rightarrow \mathcal{O}^d$ is a $\deln{\cH^\eta_t}_{t \in \sbr{0,1}}$-progressively measurable process given by $Q_t\del{\omega} = Q\del{t,\omega} := V\del{t,\omega} U^*\del{t,\omega}$ for $U\del{t,\omega}, V\del{t,\omega} \in \cO^d$ appearing in the SVD of matrix $\sigma\del{s,\omega} \sqrt{\overline{\sigma} \overline{\sigma}^*\del{s}}$. In particular, \begin{multline*}
    K\del{\eta,\nu} = M\del{\eta,\nu} = \mathbb{E}^{\eta}\del{h\del{\int_0^\cdot b\del{s,X} ds - \int_0^\cdot \overline{b}\del{s} ds}} + \\
    \mathbb{E}^{\eta} \del{g\del{\int_0^1 \Tr\del{ \sigma\del{s,X} \sigma^*\del{s,X} + \overline{\sigma} \overline{\sigma}^*\del{s}} ds - 2 \int_0^1 \Tr\del{\sigma\del{s,X} \sqrt{\overline{\sigma} \overline{\sigma}^*\del{s}} Q\del{s,X}} ds}}.
    \end{multline*}

\end{prop}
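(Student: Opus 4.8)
The plan is to split the cost \eqref{eq.separablecost}, observe that one summand is determined by the first marginal, reduce the other to a pathwise matrix--trace optimisation, and recognise $T$ as the minimiser. Under any $\pi\in\Pi_{bc}\del{\eta,\nu}$, Theorem \ref{thm:equivctdscausality} gives the semimartingale decompositions $X=M^\eta+V^\eta$, $Y=M^\nu+V^\nu$; by \eqref{eqn:canonicalsemimartdecomp}--\eqref{eqn:qvofmartcanonical}, $V^\eta=\int_0^\cdot b\del{s,X}\,ds$ and $\sbr{M^\eta}=\int_0^\cdot\sigma\sigma^*\del{s,X}\,ds$ are functions of $X$ alone, while the hypotheses that $\overline b$ is deterministic and $\overline\sigma\,\overline\sigma^*=\kappa\kappa^*$ is deterministic make $V^\nu=\int_0^\cdot\overline b\del{s}\,ds$ and $\sbr{M^\nu}=\int_0^\cdot\overline\sigma\,\overline\sigma^*\del{s}\,ds$ deterministic. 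Thus $h\del{V^\eta-V^\nu}=h\deln{\int_0^\cdot b\del{s,X}\,ds-\int_0^\cdot\overline b\del{s}\,ds}$ is a fixed $\cF_1$-measurable function of $X$, with $\pi$-expectation $\bbE^\eta[\,\cdot\,]$ for every $\pi$. It then remains to $(A)$ prove $T\in\cT_{bc}\del{\eta,\nu}$, and $(B)$ prove that $\Tr\deln{\sbr{M^\eta-M^\nu}_1}$ is minimised over $\Pi_{bc}\del{\eta,\nu}$ by $\pi_T$, with minimal value the $X$-measurable random variable inside $g$ in the statement (we treat non-decreasing $g$; the non-increasing case is symmetric, with $Q$ replaced by a sign-flipped analogue).

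For $(A)$: since $\sqrt{\overline\sigma\,\overline\sigma^*}$ is a deterministic function of time, the SDE $dY_t=\overline b\del{t}\,dt+\sqrt{\overline\sigma\,\overline\sigma^*}\del{t}\,d\tilde B_t$, $Y_0=0$, has a unique strong solution, and by L\'evy's characterisation the martingale part of \emph{any} process with law $\nu$ is a continuous martingale with the deterministic quadratic variation $\int_0^\cdot\overline\sigma\,\overline\sigma^*\del{s}\,ds$, hence Gaussian with that covariance; so $F^{\sqrt{\overline\sigma\,\overline\sigma^*},\overline b}_0$ also pushes $\bbW^d$ forward to $\nu$. Since $\sigma$ is invertible, $R^{\sigma,b}_0$ maps $\eta$ to $\bbW^d$, and $T=F^{\sqrt{\overline\sigma\,\overline\sigma^*},\overline b}_0\circ\deln{X_\cdot\mapsto\int_0^\cdot Q_s\,dX_s}\circ R^{\sigma,b}_0$ has exactly the structure of Corollary \ref{corollary:strongsolnsdebicausalmongemap} (equivalently it satisfies \eqref{eqn:bicausalmongemapeqn}--\eqref{eqn:Qstructureeqn} of Theorem \ref{thm:bicausaltptmapsbetweenSDEsdegen} with $\overline\sigma$ there replaced by $\sqrt{\overline\sigma\,\overline\sigma^*}$, condition \eqref{eqn:Qstructureeqn} being automatic); hence, using progressive measurability of $Q$ obtained from a measurable singular value decomposition as in the proof of Theorem \ref{thm:bicausaltptmapsbetweenSDEsdegen}, $T\in\cT_{bc}\del{\eta,\nu}$.

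For $(B)$: fix $\pi\in\Pi_{bc}\del{\eta,\nu}$. By Theorem \ref{thm:bicausaltptplanbetweenSDEs} there are, on a common stochastic basis, Brownian motions $B,\tilde B$ with $\sbr{B,\tilde B}_t=\int_0^t\rho_s\,ds$, $\rho_s\in\cC^d$ (so of operator norm at most $1$), solving \eqref{eqn:firstsde}--\eqref{eqn:2ndsde} with solutions distributed as $X,Y$. Then the matrix $c_s$ defined by $d\sbr{M^\eta,M^\nu}_s=c_s\,ds$, namely $c_s=\sigma\del{s,X}\rho_s\overline\sigma^*\del{s,Y}$, ranges precisely over the matrices making $\begin{pmatrix}\sigma\sigma^*\del{s,X}&c_s\\ c_s^{*}&\overline\sigma\,\overline\sigma^*\del{s}\end{pmatrix}$ positive semidefinite. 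By the Schur-complement description of such block matrices together with von Neumann's trace inequality, $\envert{\Tr c_s}$ is bounded by the nuclear norm of the product of the two diffusion coefficients, i.e.\ by $\Tr\deln{\sigma\del{s,X}\sqrt{\overline\sigma\,\overline\sigma^*}\del{s}\,Q\del{s,X}}$ with $Q$ as in the statement, and equality is attained by an appropriate orthogonal $\rho_s$. Since $\Tr\deln{\sbr{M^\eta-M^\nu}_1}=\int_0^1\Tr\deln{\sigma\sigma^*\del{s,X}+\overline\sigma\,\overline\sigma^*\del{s}}\,ds-2\int_0^1\Tr c_s\,ds$, this gives a pathwise lower bound $\Tr\deln{\sbr{M^\eta-M^\nu}_1}\ge G\del{X}$, where $G\del{X}$ is the quantity inside $g$ in the statement; monotonicity of $g$ then yields $\bbE^\pi\del{c}\ge\bbE^\eta\deln{h\del{\cdots}}+\bbE^\eta\deln{g\del{G\del{X}}}$ for every $\pi\in\Pi_{bc}\del{\eta,\nu}$, hence $K\del{\eta,\nu}$ is at least the stated value. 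For $\pi_T$, the martingale part of $T\del{X}$ is $M^\nu=\int_0^\cdot\sqrt{\overline\sigma\,\overline\sigma^*}\del{s}\,Q\del{s,X}\,dB_s$, so $c_s=\sigma\del{s,X}\deln{\sqrt{\overline\sigma\,\overline\sigma^*}\del{s}Q\del{s,X}}^{*}$ attains equality above and $\bbE^{\pi_T}\del{c}$ equals the stated value; combined with $\cT_{bc}\del{\eta,\nu}\subseteq\Pi_{bc}\del{\eta,\nu}$ this forces $M\del{\eta,\nu}=K\del{\eta,\nu}$ equal to the stated value and exhibits $T$ as optimal.

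The main obstacle is step $(B)$: recognising that $\cC^d$-valuedness of the correlation process is exactly the operator-norm constraint and --- via the Schur-complement characterisation of positive semidefinite block matrices --- that this is what pins down the admissible cross-covariations $c_s$, then solving the constrained maximisation of $\Tr c_s$ by von Neumann's inequality together with its equality case, all $\del{\cH^\eta_t}$-progressively measurably. A secondary technical point, handled with Moore--Penrose pseudoinverses, is that $\overline\sigma$ need not be invertible, so the $\Ker$-form of Theorem \ref{thm:bicausaltptmapsbetweenSDEsdegen} is required in $(A)$ and the degenerate directions must be tracked in the equality analysis of $(B)$.
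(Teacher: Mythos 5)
Your strategy mirrors the paper's proof: split the cost into its $h$- and $g$-parts, observe that $V^\eta-V^\nu$ depends on $X$ alone so the $h$-part is $\pi$-independent, invoke Theorem \ref{thm:bicausaltptplanbetweenSDEs} to realise the marginals as SDEs driven by jointly-adapted Brownian motions $B,\tilde B$ with correlation $\rho\in\cC^d$, reduce $\Tr\bigl(\sbr{M^\eta-M^\nu}_1\bigr)$ to a pathwise maximisation of the cross-trace over $\cC^d$, solve that maximisation by a trace inequality (the paper cites \cite[Prop.~4.7]{bion2019wasserstein}, which packages the von Neumann/Schur-complement reasoning you spell out), and verify that $T$ attains the bound via Theorem \ref{thm:bicausaltptmapsbetweenSDEsdegen}. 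So the approach is essentially the same, just with the intermediate linear-algebra steps made explicit. Two small cautions. First, your primary appeal to Corollary \ref{corollary:strongsolnsdebicausalmongemap} is not strictly licensed here: that corollary assumes both SDEs admit strong, pathwise-unique solutions, whereas the hypothesis on $\eta$ in this proposition is only weak uniqueness. Your parenthetical fallback---checking \eqref{eqn:bicausalmongemapeqn}--\eqref{eqn:Qstructureeqn} of Theorem \ref{thm:bicausaltptmapsbetweenSDEsdegen} directly, with \eqref{eqn:Qstructureeqn} automatic because $\sigma$ is invertible---is the correct route and is what the paper uses. Second, the claim that ``$c_s$ ranges precisely over the matrices making the block PSD'' is not needed for the lower bound and is not quite exact when $\overline\sigma$ degenerates; only the forward containment (that $\rho_s\in\cC^d$ forces the block to be PSD) is used, and the realisability of the optimal $\rho_s$ at a point of $\cO^d$ is what the paper gets from \cite[Prop.~4.7]{bion2019wasserstein}. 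Neither point affects the correctness of the core argument, which matches the paper's.
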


\begin{remark}
    Note that the condition on $\overline{\sigma}$ implies that 
     $\overline{\sigma}(t,\omega)\overline{\sigma}^*(t,\omega) $ only depends on $t$ 
    but is more general than assuming $\overline{\sigma}$ depends on time only. For example, 
    \begin{multline*}
\overline{\sigma}\del{t,\omega} := \\
\begin{pmatrix}
    \cos\del{\gamma + \beta} \del{\frac{\alpha_1 + \alpha_2}{2}}^2 + \cos\del{\gamma - \beta} \del{\frac{\alpha_1 - \alpha_2}{2}}^2 & \sin\del{\gamma - \beta} \del{\frac{\alpha_1 - \alpha_2}{2}}^2 + \sin\del{\gamma + \beta} \del{\frac{\alpha_1 + \alpha_2}{2}}^2 \\
    \sin\del{\gamma - \beta} \del{\frac{\alpha_1 - \alpha_2}{2}}^2 - \sin\del{\gamma - \beta} \del{\frac{\alpha_1 + \alpha_2}{2}}^2 & \cos\del{\gamma + \beta} \del{\frac{\alpha_1 + \alpha_2}{2}}^2 + \cos\del{\gamma - \beta} \del{\frac{\alpha_1 - \alpha_2}{2}}^2
\end{pmatrix}    
\end{multline*} satisfies this condition. 
\end{remark}
\begin{remark}
    As the singular value decomposition is non-unique,  the solution to $\eqref{eqn:bcmp}$-$\eqref{eqn:bckp}$ is also in general non-unique. Furthermore, the assumptions of Proposition \ref{prop:solntobicausalmongeegs} assumes no regularity on the drift/diffusion coefficients and still concludes the equivalence of the bicausal Kantorovich and Monge problem, which is not covered by Corollary \ref{corollary:equalityofbckpandbcmp}. In the case $d = 1$, $\sigma, \overline{\sigma} \geq 0$, $h\del{\omega} := \norm{\omega}_{1 \text{-var}}$ and $g\del{x} := x$, the preceding result agrees with that of \cite[Proposition $3.3$]{adaptedwassersteindistancesbackhoff}. \
\end{remark}

\begin{proof}
    By rotation invariance of Brownian motion and weak uniqueness, we can without loss of generality replace $\overline{\sigma}\del{t,\omega}$ by the time-dependent deterministic function $\xi : \sbr{0,1} \rightarrow \bbR^{d \times d}, \xi\del{t} := \sqrt{\overline{\sigma}\overline{\sigma}^*\del{t}} = \sqrt{\kappa \kappa^*(t)}$.  
    
    Let $\pi \in \Pi_{bc}\del{\eta,\nu}$. By Theorem \ref{thm:bicausaltptplanbetweenSDEs}, there exists some stochastic basis $\del{\Omega, \mathcal{G}, \del{\mathcal{G}_t}_{t \in \sbr{0,1}}, \mathbb{P}}$ and $\deln{\cG_t}$-adapted processes $B, \tilde{B}, Z$ and $\tilde{Z}$ satisfying  Theorem \ref{thm:bicausaltptplanbetweenSDEs}. Then \begin{multline*}
    \mathbb{E}^\pi\del{c} = \mathbb{E}^{\mathbb{P}}\del{c\del{X - Y}}
    = \mathbb{E}^{\mathbb{P}}\deln{h\deln{\int_0^\cdot b\deln{s,Z} ds - \int_0^\cdot \overline{b}\del{s} ds}} + \\
    \mathbb{E}^{\mathbb{P}}\deln{g\deln{\Tr\del{\sbr{\int_0^\cdot \sigma\del{s,Z} dB_s - \int_0^\cdot \xi\del{s} d\tilde{B}_s}_1}}}.
    \end{multline*} Setting $\rho_t=d\sbrn{B, \tilde{B}}/dt \in \mathcal{C}^d$ as in Theorem \ref{thm:bicausaltptplanbetweenSDEs}, we have \begin{align*}
    &\Tr\del{\sbr{\int_0^\cdot \sigma\deln{s,Z} dB_s - \int_0^\cdot \xi\del{s} d\tilde{B}_s}_1} \\
    = &\int_0^1 \Tr\del{ \sigma\deln{s,Z} \sigma^*\deln{s,Z} + \overline{\sigma} \overline{\sigma}^*\del{s}} ds - 2 \int_0^1 \Tr\del{\sigma\del{s,Z} \xi\del{s} \rho_s} ds \\
    \geq &\int_0^1 \Tr\del{ \sigma\deln{s,Z} \sigma^*\deln{s,Z} + \overline{\sigma} \overline{\sigma}^*\del{s}} ds - 2 \int_0^1 \sup_{C \in \mathcal{C}^d} \Tr\del{\sigma\deln{s,Z} \xi\del{s} C} ds \\
    = &\int_0^1 \Tr\del{ \sigma\deln{s,Z} \sigma^*\deln{s,Z} + \overline{\sigma} \overline{\sigma}^*\del{s}} ds - 2 \int_0^1 \sup_{C \in \mathcal{O}^d} \Tr\del{\sigma\deln{s,Z} \xi\del{s} C} ds,
    \end{align*} where we have used \cite[Proposition $4.7$]{bion2019wasserstein} in the last equality. By the same proposition, \[
    \sup_{C \in \mathcal{O}^d} \Tr\del{\sigma\deln{s,Z} \xi\del{s} C} = \Tr\del{\sigma\deln{s,Z} \xi\del{s} Q\deln{s,Z}},
    \] where $Q$ is defined as in the statement of the proposition. Hence, by monotonicity of $g$ and temporarily suppressing the arguments of $\sigma$ and $b$, \begin{multline}
    \label{eqn:lowerbdbckpeg}
    \mathbb{E}^\pi\del{c} \geq \mathbb{E}^{\eta}\deln{h\deln{\int_0^\cdot b ds - \int_0^\cdot \overline{b}\del{s} ds}} \\
    + \mathbb{E}^{\eta} \deln{g\deln{\int_0^1 \Tr\del{ \sigma \sigma^* + \overline{\sigma} \overline{\sigma}^*\del{s}} ds - 2 \int_0^1 \Tr\del{\sigma \xi\del{s} Q\del{s,X}} ds}}.
    \end{multline} Since the RHS of \eqref{eqn:lowerbdbckpeg} no longer depends on $\pi$, taking infimum over $\pi \in \Pi_{bc}\del{\eta, \nu}$ gives that \eqref{eqn:bckp} is lower bounded by the RHS of \eqref{eqn:lowerbdbckpeg}. One easily checks that this lower bound is achieved by $T$ as defined in the statement of the proposition, which is a bicausal Monge map by Theorem \ref{thm:bicausaltptmapsbetweenSDEsdegen}.

\end{proof}

\paragraph{One dimensional case}
In one dimension, the bicausal Kantorovich problem between laws of SDEs $\mu^{\sigma,b}_0$ and $\mu^{\overline{\sigma}, \overline{b}}_0$ was studied in \cite{backhoffveraguas2024adaptedwassersteindistancelaws,robinson2024bicausal} for a variety of costs and regularity conditions on $\sigma,b, \overline{\sigma}$ and $\overline{b}$.
For example, if $b, \overline{b}: \bbR \rightarrow \bbR$ and $\sigma, \overline{\sigma} : \bbR \rightarrow \bbR_{\geq 0}$ are Lipschitz, then by \cite[Theorem $1.3$]{backhoffveraguas2024adaptedwassersteindistancelaws}, for any $p \geq 1$ and $c\del{\omega, \omega'} := \int_0^1 \envert{\omega_s - \omega'_s}^p ds$, an explicit solution to \begin{equation}
    \label{eqn:1dimKantorovichbicausal}
    \inf_{\pi \in \Pi_{bc}\deln
{\mu^{\sigma,b}_0, \mu^{\overline{\sigma},\overline{b}}_0}} \mathbb{E}^\pi\del{c}
\end{equation} is given by the so-called \emph{synchronous coupling} $\pi^* := \deln{F^{\sigma,b}, F^{\overline{\sigma}, \overline{b}}}_{\#}\bbW^d$. In other words, on some probability space $\deln{\Omega, \cF, \bbP}$ with Brownian motion $B = (B_t)_{t \in \sbr{0,1}}$, we construct solutions to \eqref{eqn:firstsde} and \eqref{eqn:2ndsde} driven by the \emph{common} noise $B$ as $Z = \del{Z_t}_{t \in \sbr{0,1}} := F^{\sigma,b}\deln{B}$ and $\tilde{Z} = \deln{\tilde{Z}_t}_{t \in \sbr{0,1}} = F^{\overline{\sigma},\overline{b}}(B)$. Then, \[
\bbE^{\bbP}c(Z, \tilde{Z})  = \bbE^{\bbP} \int_0^1 |Z_s - \tilde{Z}_s|^p ds = \inf_{\pi \in \Pi_{bc}\deln
{\mu^{\sigma,b}_0, \mu^{\overline{\sigma},\overline{b}}_0}} \mathbb{E}^\pi\del{c}.
\] If $\sigma\del{x} > 0$ for all $x \in \bbR$, then by Corollary \ref{corollary:strongsolnsdebicausalmongemap}, we see that the synchronous coupling is induced by a Monge map $F^{\overline{\sigma}, \overline{b}} \circ R^{\sigma,b}$, \[
\pi^* = \deln{\text{Id}, F^{\overline{\sigma}, \overline{b}} \circ R^{\sigma,b}}_{\#}\mu^{\sigma,b}_0
\] and $\eqref{eqn:bckp}=\eqref{eqn:bcmp}$ in agreement with Corollary \ref{corollary:equalityofbckpandbcmp}. The filtration used to define bicausal couplings in \cite{backhoffveraguas2024adaptedwassersteindistancelaws} is the canonical one $\del{\mathcal{F}_t}_{t \in \sbr{0,1}}$, instead of the right-continuous one used in this work, $\del{\mathcal{H}_t}_{t \in \sbr{0,1}}$. However, the Lipschitz assumption in this example implies that $\mu^{\sigma,b}_0$ and $\mu^{\overline{\sigma}, \overline{b}}_0$ are laws of strong Markov processes \cite[Theorem 8.7]{le2016brownian} and these two definitions of bicausal couplings coincide (see \cite[Remark $2.3$]{backhoffveraguas2024adaptedwassersteindistancelaws} or Remark \ref{remark:causalcpls} $ii.$).



\textbf{Acknowledgements}
The authors would like to thank Yifan Jiang, Akshay Hegde and Daniel Goodair for the numerous fruitful discussions and helpful comments.

\newpage

\appendix

\end{document}